\newcommand{\esn}{\normalcolor{}}
\newcommand{\otb}{{\overline{\otimes}}}
\newcommand{\otk}{{\otimes_{\ku}}}
\newcommand{\Mo}{{\mathcal M}}
\newcommand{\No}{{\mathcal N}}
\newcommand{\lto}{{\longrightarrow}}
\newcommand{\tb}{\overline{T}}
\newcommand{\xib}{\overline{\xi}}
\newcommand{\KER}{\mathfrak{Ker}}
\newcommand{\Ss}{{\mathcal S}}
\newcommand{\ot}{{\otimes}}
\newcommand{\ele}{{\mathcal L}}
\newcommand{\ca}{{\mathcal C}}
\newcommand{\Do}{{\mathcal D}}
\newcommand{\Bc}{{\mathcal B}}
\newcommand{\Fc}{{\mathcal F}}
\newcommand{\op}{\rm{rev}}
\newcommand{\opp}{\rm{op}}
\newcommand{\caT}{\ca \rtimes T}
\newcommand{\catop}{\ca^{\op} \rtimes \tb}
\newcommand{\ku}{{\Bbbk}}
\newcommand{\Z}{{\mathbb Z}}
\newcommand{\uno}{ \mathbf{1}}
\newcommand{\cab}{{\underline{\ca}}}
\newcommand{\Tt}{{\mathfrak T}}
\newcommand{\id}{\mbox{\rm id\,}}
\newcommand{\eqmod}{\mbox{\rm EqMod\,}}
\newcommand{\Res}{\mbox{\rm Res\,}}
\newcommand{\Ind}{\mbox{\rm Ind\,}}
\newcommand\rrep{\mbox{-}\operatorname{mod}}
\newcommand\rep{\operatorname{mod}\!\mbox{-}}
\newcommand\corep{\operatorname{comod}\!\mbox{-}}
\newcommand{\vect}{\mbox{\rm vect\,}}
\newcommand{\Mod}{\mbox{\rm Mod\,}}
\newcommand{\Fun}{\operatorname{Hom}}
\newcommand{\Funl}{\operatorname{Hom^{lax}}}
\newcommand{\Rep}{\operatorname{Rep}}
\newcommand\Hom{\operatorname{Hom}}
\newcommand{\End}{\operatorname{End}}
\newcommand{\Id}{\mathop{\rm Id}}
\renewcommand{\_}[1]{\mbox{$_{\left( #1 \right)}$}}
\theoremstyle{plain}
\numberwithin{equation}{section}
\newtheorem{teo}{Theorem}[section]
\newtheorem{lema}[teo]{Lemma}
\newtheorem{cor}[teo]{Corollary}
\newtheorem{prop}[teo]{Proposition}
\theoremstyle{definition}
\newtheorem{defi}[teo]{Definition}
  \newtheorem{exa}[teo]{Example}
\theoremstyle{remark}
\newtheorem{rmk}[teo]{Remark}
\def\pf{\begin{proof}}
\def\epf{\end{proof}}
\theoremstyle{remark}
\begin{document}

\title[Module categories over equivariantized tensor
categories]{Module categories over equivariantized tensor categories }
\author[Mombelli and Natale]{
Mart\'\i n Mombelli and Sonia Natale}
\keywords{tensor category; module category; Hopf monad}
\thanks{The work of M. M. was partially supported by
 CONICET, Secyt-UNC, Mincyt (C\'ordoba) Argentina. The work of S. N. was partially supported by CONICET, Secyt-UNC and the Alexander von Humboldt Foundation}
\address{Facultad de Matem\'atica, Astronom\'\i a y F\'\i sica
\newline \indent
Universidad Nacional de C\'ordoba
\newline
\indent CIEM -- CONICET
\newline \indent Medina Allende s/n
\newline
\indent (5000) Ciudad Universitaria, C\'ordoba, Argentina} \email{
mombelli@famaf.unc.edu.ar} \email{ natale@famaf.unc.edu.ar
\newline \indent \emph{URL:}\/
http://www.famaf.unc.edu.ar/$\sim$mombelli/welcome.html
\newline \indent \emph{URL:}\/ http://www.mate.uncor.edu/$\sim$natale }

\begin{abstract} For a  finite  tensor category $\ca$ and a Hopf monad
$T:\ca\to \ca$ satisfying certain conditions we describe exact indecomposable
left $\ca^T$-module categories in
terms of left $\ca$-module categories and some extra data. We also give a
2-categorical interpretation of the
process of equivariantization of module categories.
\end{abstract}

\subjclass[2010]{18D10,  16T05}

\date{May 30, 2014}
\maketitle

\section*{Introduction}

As is the case in the study of any algebraic structure, a fundamental r\^ ole in the study of tensor categories is played by its "representations". 
The natural notion of representation of a tensor category $\ca$ is that of a \emph{module category} over $\ca$. A (left) module category over a tensor
category $\ca$ is a $\ku$-linear Abelian category $\Mo$ equipped with a $\ca$-action, that is, an exact 
bifunctor $\otb: \ca \times \Mo \to \Mo$ endowed with functorial associativity and unit constraints which satisfy appropriate coherence conditions. 
This notion is recalled in Section \ref{s:modcat}, it can be regarded as a "categorification" of the notion of module over an algebra. 
Many papers have been devoted to the study of different aspects of module categories over a monoidal or tensor category in the last years. 

In the context of finite tensor categories it is convenient to restrict the attention to the class of \emph{exact} module categories: this class of module categories was introduced in \cite{eo}, see also \cite[Section 2.6]{EGNO}.   By definition, a module category $\Mo$ is exact if it is finite and for any
projective object $P$ of $\ca$ and for any object $M$ of $\Mo$, the object $P \otb M$ is projective.

\medbreak Examples of finite tensor categories over $\ku$ are given by the categories of finite dimensional (co)modules over a finite dimensional Hopf algebra $H$ over $\ku$. Module categories over such tensor categories have been investigated intensively for several different classes of Hopf algebras.

A natural generalization of a Hopf algebra is given by a Hopf monad, as introduced in \cite{BV}, \cite{blv}. Let $\ca$ be a tensor category over $\ku$. A    Hopf monad on $\ca$ is a monad $T$ on $\ca$ which is a comonoidal functor in a compatible way and such that certain associated fusion operators are invertible. 
If $T$ is a $\ku$-linear right exact Hopf monad on a (finite) tensor category $\ca$, then the Eilenberg-Moore category $\ca^T$ of $T$-modules in $\ca$ is also a (finite) tensor category over $\ku$ and the forgetful functor $\Fc: \ca^T \to \ca$ is a tensor functor. This functor is in addition dominant if $T$ is a faithful endofunctor of $\ca$. 

\medbreak The main goal of this paper is to give a description of exact indecomposable module categories over the tensor category $\ca^T$ of $T$-modules in a (finite) tensor category $\ca$, where $T$ is a $\ku$-linear right exact faithful Hopf monad on $\ca$. 

In order to do this we introduce the notion of a \emph{$T$-equivariant $\ca$-module category}: this consists of the data $(\Mo,
U, c)$, where $\Mo$ is a $\ca$-module category, $U$ is a monad on $\Mo$, and $(U, c): \Mo \to \Mo(T)$ is a lax $\ca$-module functor, 
such that the multiplication and unit morphisms of $U$ are morphisms of $\ca$-module functors. See Definition \ref{defi-temc}.     Here $\Mo(T)$ is a natural lax $\ca$-module category arising from $\Mo$ and the lax comonoidal functor $T$.

\medbreak We show in Theorem \ref{U-equivariant modcat} that if $\Mo$ is a $T$-equivariant
$\ca$-module category then the category $\Mo^U$ is a  $\ca^T$-module category.    We also establish some functorial properties of this assignment and, in particular,  give conditions in order that $\Mo^U$ be a simple module category in terms of $\Mo$ and $U$.

Our main result states that if $T$ is a  right exact faithful Hopf monad  on 
$\ca$ and  $\Mo$ is an exact indecomposable $\ca^T$-module category, then
there exists a $T$-equivariant indecomposable exact $\ca$-module category $\No$
with simple and exact equivariant structure $U:\No\to \No$ such that
$\Mo\simeq \No^U$ as $\ca^T$-module categories. See Theorem \ref{equi-mc} and Corollary \ref{modc-eq}.   This result can be thought of as an extension of some of the results obtained in the context of module categories over representations of finite dimensional Hopf algebras in \cite{AM} (see Example \ref{exa:mc-hopf}).

One of the tools in the proof of the main result is an investigation of the relation between module categories over the category $\caT = \End_{\ca^T}(\ca)$ of $\ca^T$-module endofunctors of $\ca$ and $T$-equivariant $\ca$-module categories. We show in Theorem \ref{equi-mc} that every $\caT$-module category $\No$ has a natural structure of a $T$-equivariant module category; in fact, the Hopf monad $T$ can be regarded as an algebra in $\caT$, and  the relevant data $U: \No \to \No(T)$ for the $T$-equivariance of $\No$ is provided by the action of $T$ on $\No$.

\medbreak Recall that, for a given tensor category $\ca$, $\ca$-module categories, (lax) $\ca$-module functors and $\ca$-module natural transformations constitute a $2$-category, that we denote ${}_\ca\Mod$ (respectively, ${}_\ca\Mod^{lax}$). We show that the assignment $\Mo \mapsto \Mo(T)$ extends to a 2-monad $\Tt$ on ${}_\ca\Mod^{lax}$, and there is a 2-equivalence of 2-categories
$${}_\ca\eqmod \simeq   ({}_\ca\Mod^{lax})^\Tt,$$ where ${}_\ca\eqmod$ is the 2-category of $T$-equivariant lax
$\ca$-module categories. This is proved in Proposition \ref{equiv-eqmod}, and gives a 2-categorical interpretation of  the
process of equivariantization of module categories.

\medbreak 
As an application we give a description of module categories over Hopf algebroids, as defined for instance in \cite{Bo},
\cite{BS}, \cite{KS}. We show in Theorem \ref{mc-halgd} that under the assumption that the basis of the Hopf algebroid $H$ is simple  (which guarantees that $H\rrep$ is indeed a tensor category), then every exact indecomposable module category over $H\rrep$ is equivalent to ${}_K\Mo$ for some
$H$-simple left $H$-comodule algebra $K$.

\medbreak We then consider the special situation where the Hopf monad $T$ is \emph{normal}, according to the definition given in \cite{BN}: recall that this means that $T$ restricts to a Hopf monad on the trivial subcategory of $\ca$. Such Hopf monad gives rise to an exact sequence of tensor categories 
$\corep  H \longrightarrow \ca^T \longrightarrow \ca$, where $H$ is the induced Hopf algebra of $T$, which is finite dimensional.   
In this context we study the category $\caT$ and show that it is (reversed) equivalent as a $\ku$-linear category to the Deligne tensor product $H\rrep \boxtimes \, \ca$. 

\medbreak The paper is organized as follows. In Sections \ref{pr-nt} and \ref{s:modcat} we recall the definitions and main basic features of tensor categories and their module categories and Hopf monads on tensor categories and the associated categories of modules, respectively. In particular, given a Hopf monad $T$ on a tensor category $\ca$, we discuss in this section the Morita dual of the category $\ca^T$ with respect to its canonical module category $\ca$. In Section \ref{mc-cat} we study module categories over the category $\ca^T$. Theorem \ref{equi-mc} and Corollary \ref{modc-eq} are proved in this section; the notions of $T$-equivariant module category and simple $T$-equivariant module category are also introduced here. Section \ref{h-algebroids} presents an application of the results in the previous section to the category of representations of a finite-dimensional Hopf algebroid. In Section \ref{2-cat} we give a 2-categorical interpretation of equivariantization of module categories. Finally in 
Section \ref{mc-exact} we discuss the case where the Hopf monad $T$ is normal and give some examples. 

\subsection*{Acknowledgement} 
%This work was partially supported by  CONICET, Secyt (UNC) and Mincyt (C\'ordoba) Argentina. 
The authors thank I. Lopez Franco for answering some questions on 2-categories. Remark \ref{ilf} is due to him. 
The work of S. Natale was done partly during a 
research stay in the University of Hamburg; she thanks the Humboldt
Foundation, C. Schweigert and the Mathematics Department of U. Hamburg for the
kind hospitality.

\section{Preliminaries and Notation}\label{pr-nt}

We shall work over an algebraically closed field $\ku$ of characteristic 0.
All vector spaces and algebras will be over $\ku$. If $A$ is an algebra then
${}_A\Mo$ will
denote the category of finite-dimensional left $A$-modules.  
If $H$ is a Hopf algebra, we shall denote by $H\rrep$, respectively $\rep H$,
 the category of finite-dimensional left (respectively right) $H$-modules and by
$ \corep H$ the category of 
finite-dimensional left $H$-comodules.

\subsection{Tensor categories}

A \emph{tensor category over} $\ku$ is a $\ku$-linear Abelian rigid monoidal
category $\ca$  such that the tensor product functor $\otimes : \ca \times \ca
\to \ca$ is $\ku$-linear in each variable, and the following conditions hold:
\begin{itemize}\item Hom spaces are finite dimensional,
\item all objects of $\ca$ have finite length,
\item the unit object $\bf 1$ is simple.
\end{itemize}

A \emph{finite tensor category} \cite{eo} is a tensor category  that  has a
finite number of isomorphism classes
 of simple objects  and
every simple object has a projective cover.
Hereafter all tensor categories will be considered over $\ku$ and every functor
will be assumed to be $\ku$-linear.

 Observe that if $\ca$ is a tensor category over $\ku$,
then it follows by rigidity that the tensor product functor
$\otimes : \ca \times \ca \to \ca$ is bi-exact.

\medbreak If $\ca$ is a tensor category, we shall denote by $\ca^{\op}$ the
tensor category whose
underlying Abelian  category is $\ca$, endowed with the opposite tensor product:
$$X\ot^{\op} Y= Y\ot X, \quad X, Y\in \ca.$$
and associativity constraint
$a^{\op}_{X,Y,Z}=a^{-1}_{Z,Y,X}$, $ X, Y, Z\in \ca$.
 Throughout this paper all tensor
categories will be assumed to be strict, unless explicitly mentioned.

\subsection{Tensor functors}\label{tensor-functor}   A \emph{tensor
functor} from a tensor category $\ca$ to a
tensor category $\Do$ is a $\ku$-linear  exact strong monoidal functor
$F: \ca \to \Do$.   A tensor functor preserves
duals and is automatically  faithful.

A tensor functor $F : \ca \to \mathcal D$  is called
\emph{dominant} if it satisfies any of the following equivalent
conditions (\cite[Lemma 3.1]{BN}):
\begin{enumerate}
 \item[(i)] Any object  $Y$ of $\Do$ is a subobject of $F(X)$ for some object
$X$ of $\ca$;
 \item[(ii)]  Any object  $Y$ of $\Do$ is a quotient of $F(X)$ for some object
$X$ of $\ca$;
 \item[(iii)] The Pro-adjoint of $F$ is faithful;
 \item[(iv)] The Ind-adjoint of $F$ is faithful.
\end{enumerate}
On the other hand, $F$ is called \emph{surjective} if any object of
$\Do$ is a subquotient of $F(X)$ for some $X$ of $\ca$
\cite[Definition 2.4]{eo}.  In particular, every  dominant tensor functor is
surjective.

\medbreak Let $F : \ca \to \mathcal D$ be a tensor functor between tensor
categories $\ca, \Do$. Suppose that $F$ admits a right adjoint $R: \Do \to \ca$.
Note that $R$ is automatically faithful since $F$ is dominant.
By \cite[Proposition 6.1]{BN}, $A = R(\uno)$ has a structure of  a
central commutative algebra $(A, \sigma)$ in $\mathcal Z(\ca)$.

\medbreak Assume in addition that the right adjoint $R: \Do \to \ca$ of $F$ is
exact. In this case, $F$ is called a \emph{perfect} tensor functor. Then the
category $\ca_A$ of right $A$-modules in $\ca$ is a tensor category with the
monoidal structure induced by $\otimes_A$ and the half-braiding
$\sigma$, and the functor $F$ is equivalent over $\ca_A$ to the
free module functor $F_A: \ca \to \ca_A$, $X \mapsto X \otimes A$.
That is, there is an equivalence of tensor categories $K: \Do \to \ca_A$ such
that $KF = F_A$.

\medbreak Suppose $\ca$ and $\Do$ are finite tensor categories and let $F : \ca
\to \mathcal D$ be a dominant tensor functor. Then $F$ admits
(left and right) adjoints. Furthermore, if $\Do$ is a fusion category then the
right adjoint of $F$ is exact and therefore $F$ is a perfect tensor functor.    
See \cite[Subsection 2.2]{indp-exact}.

\section{Module categories}\label{s:modcat}

 A (left) \emph{module category} over a tensor
category $\ca$ is a  locally finite   $\ku$-linear  Abelian category $\Mo$ equipped with a 
bifunctor $\otb: \ca \times \Mo \to \Mo$, that we will sometimes refer as the
\emph{action},  which is $\ku$-bilinear and bi-exact,  endowed with 
 natural associativity
and unit isomorphisms $m_{X,Y,M}: (X\otimes Y)\otb M \to X \otb
(Y\otb M)$, $\ell_M: \uno \otb M\to M$. These isomorphisms are subject to the following conditions:
\begin{equation}\label{left-modulecat1} m_{X, Y, Z\otimes M}\; m_{X\otimes Y, Z,
M}= (\id_{X}\otimes m_{Y,Z, M})\;  m_{X, Y\otimes Z, M},
\end{equation}
\begin{equation}\label{left-modulecat2} (\id_{X}\otimes l_M)m_{X,{\bf
1} ,M}= \id_{X \otb M}.
%r_X\otimes \id_{M}.
\end{equation}
 See \cite[Subsection 2.3]{EGNO}.  Sometimes we shall also say  that $\Mo$ is a $\ca$-\emph{module}.
\medbreak

We shall say that $\Mo$ is a \emph{lax} $\ca$-module when possibly
 the associativity and unit maps $m_{X,Y,M}$ and $\ell_M$  are not necessarily
isomorphisms.

\medbreak

A \emph{module functor} between module categories $\Mo$ and $\Mo'$ over a
tensor category $\ca$ is a pair $(F,c)$, where
\begin{enumerate}
 \item[$\bullet$] $F:\Mo \to
\Mo'$ is a  left exact  functor;

\item[$\bullet$]  $c$ is a natural isomorphism: $c_{X,M}: F(X\otb M)\to
X\otb F(M)$, $X\in  \ca$, $M\in \Mo$,  such that
for any $X, Y\in
\ca$, $M\in \Mo$:
\begin{align}\label{modfunctor1}
(\id_X \otb  c_{Y,M})c_{X,Y\otb M}F(m_{X,Y,M}) &=
m_{X,Y,F(M)}\, c_{X\otimes Y,M}
\\\label{modfunctor2}
\ell_{F(M)} \,c_{\uno ,M} &=F(\ell_{M}).
\end{align}
\end{enumerate}
 If the maps $c_{X,M}$ satisfying \eqref{modfunctor1}
 and \eqref{modfunctor2} are not necessarily isomorphisms,
the pair $(F,c)$ will be called a \emph{lax module functor}.

\medbreak
There is a composition
of module functors: if $\Mo''$ is another module category and
$(G,d): \Mo' \to \Mo''$ is another module functor then the
composition
\begin{equation}\label{modfunctor-comp}
(G\circ F, e): \Mo \to \Mo'', \qquad  e_{X,M} = d_{X,F(M)}\circ
G(c_{X,M}),
\end{equation} is
also a module functor.

\smallbreak  Let $\Mo_1$ and $\Mo_2$ be $\ca$-modules.
We denote by $\Hom_{\ca}(\Mo_1, \Mo_2)$ the category whose
objects are module functors $(F, c)$ from $\Mo_1$ to $\Mo_2$. A
morphism between  $(F,c)$ and $(G,d)\in\Hom_{\ca}(\Mo_1,
\Mo_2)$ is a natural transformation $\alpha: F \to G$ such
that for any $X\in \ca$, $M\in \Mo_1$:
\begin{gather}
\label{modfunctor3} d_{X,M}\alpha_{X\otb M} =
(\id_{X}\otb \alpha_{M})c_{X,M}.
\end{gather}
We shall also say that $\alpha: F \to G$ is a $\ca$-\emph{module
transformation}.

\smallbreak
Two module categories $\Mo_1$ and $\Mo_2$ over $\ca$
are {\em equivalent} if there exist module functors $F:\Mo_1\to
\Mo_2$ and $G:\Mo_2\to \Mo_1$ and natural isomorphisms
$\id_{\Mo_1} \to F\circ G$, $\id_{\Mo_2} \to G\circ F$ that
satisfy \eqref{modfunctor3}.

\medbreak
The  direct sum of two module categories $\Mo_1$ and $\Mo_2$ over
a tensor category $\ca$  is the $\ku$-linear category $\Mo_1\times
\Mo_2$ with coordinate-wise module structure. A module category is
{\em indecomposable} if it is not equivalent to a direct sum of
two non trivial module categories.

\medbreak Let $\ca$ be a finite tensor category. Recall from \cite{eo} that  a
module category $\Mo$ is \emph{exact} if $\Mo$ is finite and for any
projective object
$P\in \ca$ the object $P\otb M$ is projective in $\Mo$, for all
$M\in\Mo$.

\begin{exa}\label{dominant-exact} Let $F: \ca \to \Do$ be a dominant tensor
functor between finite tensor categories $\ca, \Do$. Then the functor
$\otb: \ca \times \Do \to \Do$, given by $X \otb  Y = F(X)
\ot Y$, for all $X \in \ca$, $Y\in \Do$, endows  $\Do$ with a
structure of an indecomposable $\ca$-module category.
Since $F$ is dominant (thus surjective), then  $\Do$ is in fact an exact module
category over $\ca$; see \cite[Example 3.3 (i)]{eo}. \end{exa}

A submodule category of a $\ca$-module $\Mo$ is a Serre subcategory
$\No$ such that the inclusion functor $\No\to \Mo$ is a module functor.
A module category is \emph{simple} if it has no non-trivial submodule
categories.
It is known that for exact module categories the notions of indecomposability
and
simplicity are equivalent.

\begin{rmk}\label{dual-mc} If $\ca$ is a finite tensor category and $\Mo$ is an
indecomposable exact
$\ca$-module, the dual category $\ca^*_{\Mo}=\End_\ca(\Mo)$ is again
a finite tensor category \cite{eo}. It is shown in \cite[Theorem 3.31]{eo} that
there is a bijective
correspondence between equivalence classes of exact  indecomposable
left  module categories over $\ca$ and over $\ca^*_{\Mo}$.
%In \emph{loc. cit.} it is explained a bijective
%correspondence between  equivalence classes of exact  indecomposable
%left  $\ca$-module categories and exact  indecomposable
%left $\ca^*_{\Mo}$-module categories,
%see \cite[Theorem 3.31]{eo}.
The correspondence
assigns to a left $\ca$-module category $\No$ the
 left $\ca^*_{\Mo}$-module category $\Hom_\ca(\No, \Mo)$. This fact implies
that there is a bijective
correspondence between  equivalence classes of  exact indecomposable
left $\ca$-module categories and  equivalence classes of  exact indecomposable
\emph{right}  $\ca^*_{\Mo}$-module categories, which assigns to every
 left $\ca$-module category $\No$ the right $\ca^*_{\Mo}$-module category
$\Hom_\ca(\Mo ,\No )$. \end{rmk}

 Let  $(F, \xi,\phi):\ca\to  \tilde \ca$ be a comonoidal functor and let $(\Mo,
\otb, m)$ be a module
category over $\tilde \ca$. We shall denote by $\Mo(F)$ the lax module category
over $\ca$  with underlying
Abelian category $\Mo$   and action, associativity and unit  morphisms  defined,
 respectively,  by
$$X\otb^F M=F(X)\otb M,$$ $$m_{X,Y,M}^F=m_{F(X),F(Y),M} (\xi_{X,Y}\otb\,
\id_M), \quad l^F_M= l_M (\phi\otb \id_M),$$
for all $X, Y\in \ca$, $M\in \Mo$.

\section{Hopf monads and tensor categories}\label{hm-tc}

\subsection{Hopf Monads}\label{h-monads} Let $\ca$ be a  category.
A \emph{monad} on $\ca$ is an algebra in the strict  monoidal  category
$\End(\ca)$, that is, a triple $(T,\mu,\eta)$ where $T:\ca\to \ca$ is a functor,
$\mu:T^2\to T$ and $\eta:\Id\to T$ are natural transformations such that
\begin{align}\label{monads} \mu_X T(\mu_X)=\mu_X\mu_{T(X)}, \quad  \quad
\mu_X\eta_{T(X)}=\id_{T(X)}=\mu_X T(\eta_X).
\end{align}

Let $(T,\mu,\eta)$ be a monad on a category $\ca$. An \emph{action} of $T$ on an
object $X$ of
$\ca$ is a morphism
$r:T(X) \to X$ in $\ca$ such that:
\begin{equation}\label{module:monads}
r T(r)= r \mu_X \quad \text{and} \quad r \eta_X= \id_X.
\end{equation}
The pair $(X,r)$ is  called a \emph{$T$-module}.
Given two $T$-modules $(X,r)$ and $(Y,s)$ in $\ca$, a morphism of
$T$-modules from $(X,r)$ to $(Y,s)$ is a
morphism $f\in \Hom_\ca(X,Y)$  such that $f\circ r=s\circ T(f)$.
 The category of $T$-modules will be denoted by $\ca^T$. We shall denote by 
$\Fc = \Fc_T:\ca^T \to \ca$ the forgetful functor defined by $\Fc(X, r) = X$.

\medbreak  Let $(T_1,\mu_1,\eta_1)$, $(T_2,\mu_2,\eta_2)$ be monads on $\ca$.
 A \emph{morphism of monads} $\alpha:(T_1,\mu_1,\eta_1) \to (T_2,\mu_2,\eta_2)$
 (below indicated by $\alpha: T_1 \to T_2$) is a natural transformation $\alpha:
T_1 \to T_2$
such that
$$\alpha_X{\mu_1}_X = {\mu_2}_X  \alpha_{T_2(X)} T(\alpha_X),
\qquad \alpha_X{\eta_1}_X ={\eta_2}_X, \qquad \text{for all } X \in \ca.$$

In view of \cite[Lemma 1.7]{BV}, a morphism of monads $\alpha:T_1 \to T_2$
induces a functor $\alpha^*:\ca^{T_2} \to \ca^{T_1}$ over $\ca$, in the form
$\alpha^*(X, r) = (X, r \circ \alpha_X)$, such that $\Fc_{T_1} \alpha^* =
\Fc_{T_2}$. Furthermore, every such functor is of the form $\alpha^*$ for some
morphism of monads $\alpha:T_1 \to T_2$. \esn

\medbreak A \emph{bimonad} on a monoidal category $\ca$ is a monad
$(T,\mu,\eta)$ on
 $\ca$  such that the functor $T$ is equipped with a comonoidal structure and
the natural transformations $\mu$ and $\eta$ are comonoidal
transformations.
This means that  there is a natural transformation
$\xi_{X,Y}:T(X\ot Y)\to T(X)\ot T(Y)$ and a morphism $\phi:T(\uno)\to \uno$ in $\ca$ such
that the following conditions hold:
\begin{equation}\label{hopfmonad1}  (\id_{T(X)}\ot \xi_{Y,Z}) \xi_{X,Y\ot Z}
= (\xi_{X,Y}\ot\id_{T(Z)}) \xi_{X\ot Y, Z},
\end{equation}
\begin{equation}\label{hopfmonad2} (\id_{T(X)}\ot\phi ) \xi_{X,\uno}
=\id_{T(X)}= (\phi\ot \id_{T(X)})\xi_{\uno, X},
\end{equation}
\begin{equation}\label{hopfmonad3} \xi_{X,Y} \mu_{X\ot Y}= (\mu_{X}\ot \mu_{Y})
\xi_{T(X),T(Y)} T(\xi_{X,Y}),
\end{equation}
\begin{equation}\label{hopfmonad11} \phi\mu_\uno= \phi T(\phi), \quad
\xi_{X,Y}\eta_{X\ot Y}=\eta_X\ot\eta_Y, \quad\phi\eta_\uno=\id_\uno.
\end{equation}

\begin{rmk} It is not required that $T$ is a strong  comonoidal  functor, meaning
that $\xi_{X,Y}$  might  not be  isomorphisms.
\end{rmk}

If $T$ is a bimonad on the monoidal category $\ca$, then $\ca^T$ is a monoidal
category with tensor product
$$(X,r)\ot (Y,s)= (X\ot Y, (r\ot s)\xi_{X,Y}),$$
for all $ (X,r),(Y,s) \in \ca^T$. The  unit  object of $\ca^T$ is  $(\uno, \phi)$.
 For more details see \cite{Moe}, \cite{BV}.

 Note that in this case the forgetful functor $\Fc: \ca^T\to \ca$ is a strict
strong monoidal functor.
The functor $\Fc$ has a left adjoint $\ele: \ca \to \ca^T$, such that $\ele(X) =
(T(X), \mu_X)$,
for every object $X$ of $\ca$. The  unit and   counit of the adjunction $(\ele,
\Fc)$  are given, respectively, by $\eta_X: X \to T(X)$ and
   $\epsilon_{(M, r)} = r: (T(M), \mu_M) \to (M, r)$, for all  $X \in \ca$, 
$(M, r) \in \ca^T$.

The left adjoint $\ele$ is a comonoidal functor with comonoidal structure given
by
\begin{equation}  \ele_2(X, Y) =  \xi_{X, Y}: (T(X \otimes Y), \mu_{X \otimes Y}) \to (T(X),
\mu_{X}) \otimes (T(Y), \mu_{Y}),\end{equation}
for all $X, Y \in \ca$, and  $\ele_0 = \phi: (T(\uno), \mu_\uno) \to (\uno, \phi)$.
Moreover, the pair $(\ele: \ca \to \ca^T, \Fc: \ca^T \to \ca)$ is a
\emph{comonoidal adjunction} in the sense of \cite[Subsection 2.5]{blv}; see \cite[Example 2.4]{blv}.

\medbreak If $(T, \xi,  \phi)$ is   bimonad on the monoidal category  $\ca$,
then $(\tb, \xib, \overline \phi)$ is
a  bimonad on $\ca^{\op}$, where  $\tb=T$  as monads, $\overline \phi = \phi$,
and  $ \xib_{X, Y}=\xi_{Y,X}$ for all
$X, Y\in \ca$.

\begin{lema}\label{op-equiv}
 The identity functor induces a strict equivalence of monoidal
categories  $(\ca^{\op})^{\tb}\simeq (\ca^T)^{\op}$.\qed
\end{lema}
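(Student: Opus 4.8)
The plan is to unwind both sides of the claimed equivalence and check that the defining data match tautologically, since everything here is ``on the nose.'' First I would recall what the two categories are as concrete data. On one side, an object of $(\ca^T)^{\op}$ is a $T$-module $(X,r)$ in $\ca$, with $r\colon T(X)\to X$ satisfying \eqref{module:monads}, and the monoidal product is the opposite of the one on $\ca^T$, namely $(X,r)\ot^{\op}(Y,s)=(Y,s)\ot(X,r)=(Y\ot X,(s\ot r)\xi_{Y,X})$. On the other side, since $\tb=T$ as a plain monad, an object of $(\ca^{\op})^{\tb}$ is again a $T$-module $(X,r)$ in the \emph{same} underlying category $\ca$ (the Eilenberg--Moore construction only sees the monad structure, not the monoidal structure), and the tensor product is $(X,r)\ot^{\op}(Y,s)$ computed with the comonoidal structure $\xib$ of $\tb$ on $\ca^{\op}$. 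Since $X\ot^{\op}Y=Y\ot X$ and $\xib_{X,Y}=\xi_{Y,X}$, the $\tb$-module structure on $(X,r)\ot^{\op}(Y,s)$ is $(X\ot^{\op}Y,(r\ot^{\op}s)\xib_{X,Y})=(Y\ot X,(s\ot r)\xi_{Y,X})$.

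Thus the identity assignment $(X,r)\mapsto(X,r)$ on objects and $f\mapsto f$ on morphisms is the candidate functor, and the computation above shows it sends the tensor product in $(\ca^{\op})^{\tb}$ to the tensor product in $(\ca^T)^{\op}$ literally, not merely up to coherent isomorphism. The next step is to check the unit objects agree: the unit of $(\ca^{\op})^{\tb}$ is $(\uno,\overline\phi)=(\uno,\phi)$, which is exactly the unit $(\uno,\phi)$ of $\ca^T$ and hence of $(\ca^T)^{\op}$. Then I would verify that the associativity and unit constraints coincide. The associativity constraint of $(\ca^T)^{\op}$ is, by the definition of $\ca^{\op}$ recalled in the Preliminaries, $a^{\op}_{X,Y,Z}=a^{-1}_{Z,Y,X}$ where $a$ is the associator of $\ca^T$; but the associator of $\ca^T$ is simply the underlying associator of $\ca$ (the forgetful functor being strict strong monoidal), so since $\ca$ is strict this is the identity, and similarly the associator of $(\ca^{\op})^{\tb}$ is the identity. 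The unit constraints are handled the same way. Hence the identity functor is a strict monoidal functor, and it is visibly bijective on objects and fully faithful, so it is a strict isomorphism — in particular a strict equivalence — of monoidal categories.

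There is essentially no obstacle here; the only point requiring a moment's care is keeping straight the three different ``opposites'' in play — the opposite monoidal category $\ca^{\op}$, the opposite monoidal category $(\ca^T)^{\op}$, and the fact that $\tb$ is defined so that its comonoidal structure is the ``flipped'' one $\xib_{X,Y}=\xi_{Y,X}$ — and confirming that these two flips cancel so that the $\tb$-module tensor product on $\ca^{\op}$ reproduces precisely the $\op$ of the $T$-module tensor product on $\ca$. Once that bookkeeping is done, the proof is the single line that the identity functor works, which is presumably why the statement is asserted with a \qed in the excerpt.
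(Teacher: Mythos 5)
Your unwinding is correct and is exactly the argument the paper intends: the lemma is asserted with no written proof precisely because the two flips (the opposite tensor product on $\ca^{\op}$ and the reversed comonoidal structure $\xib_{X,Y}=\xi_{Y,X}$ of $\tb$) cancel on the nose, so the identity functor is a strict monoidal isomorphism. Your check of the units and of the (trivial, by strictness) associativity constraints fills in the bookkeeping the authors left to the reader; nothing is missing.
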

\medbreak
 Let $\ca$ be a monoidal category. A bimonad $T$ on $\ca$ is a \emph{Hopf monad}
if the \emph{fusion operators} $H^l$ and $H^r$ defined, for all $X, Y \in \ca$,
by
\begin{align} & H^l_{X, Y}: =  (\id_{T(X)} \otimes \mu_Y) \, \xi_{X, T(Y)}: T(X
\otimes T(Y)) \to T(X) \otimes T(Y),\\
& H^r_{X, Y}: = (\mu_X \otimes \id_{T(Y)}) \, \xi_{T(X), Y}: T(T(X) \otimes Y)
\to T(X) \otimes T(Y),\end{align}
are isomorphisms \cite[Subsection 2.7]{blv}.
If $\ca$ is a rigid monoidal category and $T$ is a  Hopf monad on $\ca$ then the
category $\ca^T$ is rigid  \cite[Subsection 3.4]{blv}.

\medbreak \begin{rmk}\label{hopf-adj}
Suppose that $T$ is a Hopf monad on $\ca$. It follows from \cite[Theorem
2.15]{blv} that $(\ele, \Fc)$ is indeed a \emph{Hopf adjunction}, that is, the
left and right Hopf operators $\mathbb H^l$ and $\mathbb H^r$ defined, for every
$Y \in \ca$, $(M, r) \in \ca^T$, by
\begin{align}& \mathbb H^l = (\id_{\ele(Y)} \otimes r) \, \xi_{Y, M}: \ele(Y
\otimes M) \to \ele(Y) \otimes \ele(M),\\
& \mathbb H^r = (r \otimes \id_{\ele(Y)}) \, \xi_{M, Y}: \ele(M \otimes Y) \to
\ele(M) \otimes \ele(Y),
\end{align}
are isomorphisms. \end{rmk}

 \begin{rmk}\label{finite-tensor-ct} Suppose  that $\ca$ is a $\ku$-linear
Abelian category
 and $T$ is a $\ku$-linear right exact monad on $\ca$.
Then the category $\ca^T$ is $\ku$-linear Abelian and the forgetful functor $\Fc
: \ca^T \to \ca$ is $\ku$-linear exact.
 In this case, if  $\ca$ is a tensor category over $\ku$,
then $\ca^T$ is a tensor category over $\ku$  and   the forgetful functor $ \Fc
: \ca^T \to \ca$
is a tensor functor
\cite[Proposition 2.3]{BN}.

 \begin{lema}\label{cat-finite} Suppose that $\ca$ is a finite tensor category.   Then so is $\ca^T$.\end{lema}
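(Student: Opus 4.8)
The plan is to verify that $\ca^T$ satisfies the two defining conditions of a finite tensor category: that it has finitely many isomorphism classes of simple objects, and that every simple object admits a projective cover. By Remark \ref{finite-tensor-ct} we already know $\ca^T$ is a tensor category over $\ku$ and that the forgetful functor $\Fc:\ca^T\to\ca$ is a tensor functor, so in particular it is exact and faithful; moreover $\Fc$ has a left adjoint $\ele:\ca\to\ca^T$, $\ele(X)=(T(X),\mu_X)$, because $T$ is right exact. I would also use that $T$ being right exact makes $\ele$ right exact, while $\Fc$ is exact; the adjunction $(\ele,\Fc)$ is the key structural tool.

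First I would show that $\ca^T$ has enough projectives and identify them: since $\Fc$ is exact, its left adjoint $\ele$ sends projectives to projectives, so $\ele(P)$ is projective in $\ca^T$ whenever $P$ is projective in $\ca$. For an arbitrary object $(M,r)\in\ca^T$, the counit $\epsilon_{(M,r)}=r:\ele(M)\to(M,r)$ is an epimorphism (it is split by $\eta_M$ as a morphism in $\ca$, hence epi in $\ca$, hence epi in $\ca^T$ as $\Fc$ is faithful and exact); choosing a projective cover $P\twoheadrightarrow M$ in $\ca$ (available since $\ca$ is finite) and composing, one gets an epimorphism from the projective object $\ele(P)$ onto $(M,r)$. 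Thus $\ca^T$ has enough projectives, and being a locally finite abelian category with enough projectives in which every object has finite length, every object — in particular every simple object — has a projective cover. This handles the second condition.

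Next I would bound the number of simple objects. Every simple object $S$ of $\ca^T$ is a quotient of $\ele(P)$ for some indecomposable projective $P$ of $\ca$; equivalently, $S$ appears in the head of some $\ele(P_i)$ where $P_1,\dots,P_n$ is a complete list of the finitely many indecomposable projectives of $\ca$. Since each $\ele(P_i)$ has finite length in $\ca^T$ (its image $T(P_i)$ under the faithful exact functor $\Fc$ has finite length in $\ca$, and $\Fc$ reflects finite length along composition series), only finitely many simple objects can occur in the $\ele(P_i)$, and every simple object occurs in at least one of them. Hence $\ca^T$ has finitely many isomorphism classes of simple objects. Finally one notes that $\ca^T$ is locally finite: Hom-spaces are finite-dimensional because $\Hom_{\ca^T}((M,r),(N,s))$ is a subspace of $\Hom_\ca(M,N)$, which is finite-dimensional; and every object has finite length, as already observed. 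Combining these facts, $\ca^T$ is a finite tensor category.

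I expect the main obstacle to be the careful handling of right exactness versus exactness: one must make sure $\ele$ preserves projectives (which needs $\Fc$ exact, already granted) and that the counit $r:\ele(M)\to(M,r)$ is genuinely an epimorphism in $\ca^T$ and not merely in $\ca$. The latter is clean because $\Fc$ is faithful and exact, so it reflects epimorphisms. A secondary point requiring a line of justification is that finite length of $T(P_i)$ in $\ca$ forces finite length of $\ele(P_i)$ in $\ca^T$ with simple subquotients drawn from a finite set; this follows since $\Fc$ is exact and faithful, so it carries a composition series of $\ele(P_i)$ to a filtration of $T(P_i)$ with nonzero (hence positive-length) subquotients, bounding the length. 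Everything else is a routine assembly of the definition of finite tensor category from these ingredients.
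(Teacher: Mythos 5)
Your argument is correct and rests on the same mechanism as the paper's proof, namely the adjunction $(\ele,\Fc)$ making $\ele(P)$ a projective object of $\ca^T$ through which every object is covered. The paper compresses your item-by-item verification of the definition into a single observation: $\Hom_{\ca^T}(\ele(P),-)\cong\Hom_\ca(P,-)\circ\Fc$ is faithful exact because $\Fc$ is, so $\ele(P)$ is a projective generator of $\ca^T$, and having a projective generator is equivalent to finiteness for a locally finite abelian category.
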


\begin{proof} The assumption that the tensor category $\ca$ is finite is equivalent to the assumption it has a projective generator $P$, that is, an object $P$ of $\ca$ such that the functor $\Hom_\ca(P, -)$ is faithful exact.
Let  $\ele: \ca \to \ca^T$ be the left adjoint of the forgetful functor $\Fc: \ca^T \to \ca$. By adjointness, we obtain a natural isomorphism
$\Hom_{\ca^T}(\ele(P), -) \cong  \Hom_\ca(P, -) \circ \Fc$.     Since $\Fc$ is faithful and exact,
then $\Hom_{\ca^T}(\ele(P), -)$ is faithful exact, that is, $\ele(P)$ is a projective generator of $\ca^T$.  Thus $\ca^T$ is a finite
tensor category as claimed.
\end{proof}
\end{rmk}

\begin{exa}\label{hm-equiv}   Let $G$ be a finite
group and let $\rho: \underline{G}\to \underline{\text{Aut}}_\otimes(\ca)$
 be an action of $G$ on $\ca$ by tensor autoequivalences.
In  other  words, for any $g\in G$ we have  a tensor functor
 $(\rho^g, \zeta_g):\ca\to\ca$, and for any $g,h\in G$,
 there are natural  isomorphisms of tensor functors
$\gamma_{g, h}:\rho^g\circ \rho^h\to \rho^{gh}$ and $\rho_0: \id_\ca \to \rho^e$. Associated to such an action there is a tensor category $\ca^G$, called the \emph{equivariantization} of $\ca$ under the action $\rho$, endowed with a canonical dominant tensor functor $\ca^G \to \ca$.

It was shown in \cite[Theorem 5.21]{BN} that the action $\rho$ induces a Hopf monad $T^\rho$ on $\ca$ in such a way that $\ca^{T^\rho} \cong \ca^G$ as tensor categories over $\ca$. The Hopf monad $T^\rho$ is defined in the form $T^\rho(X) = \bigoplus_{g \in G}\rho^g(X)$, with 
multiplication $\mu : (T^\rho)^2 =\bigoplus_{g,h} \rho^g\rho^h \to T^\rho=\bigoplus_{t \in G} \rho^t$ and unit $\eta: \id_\ca \to T^\rho=\bigoplus_{g\in G}\rho^g$, defined componentwise by the morphisms $\gamma_{g, h} : \rho^g\rho^h \to \rho^{gh}$, and by  $\rho_0: \id_\ca \to \rho^e$, respectively.   The comonoidal structure morphisms $\xi_{X,Y}: \bigoplus_{g \in G} \rho^g(X \otimes Y) \to \bigoplus_{s,t \in G} \rho^s(X) \otimes \rho^t(Y)$,  and $\phi:  \bigoplus_{g \in G} \rho^g(\uno)\to \uno$, are defined componentwise by the strong comonoidal structure of the tensor functors $\rho^g$.

The Hopf monad $T^\rho$ is moreover \emph{normal} in the sense that $T^\rho(\uno)$ is a trivial object of $\ca$; see Section \ref{mc-exact}.
\end{exa}

\subsection{The category  $\caT$}  Let $\ca$ be a finite tensor category over
$\ku$ and let  $T$ be a $\ku$-linear right exact Hopf monad on $\ca$ . The
category
$\ca$ is a $\ca^T$-module through the tensor functor $\Fc: \ca^T\to \ca$.
This means that the action $\otb: \ca^T\times \ca\to \ca$ is given by
$(X, s)\otb Y= X\ot Y$, for all $(X, s)\in \ca^T$, $Y\in  \ca$.

\medbreak 

Suppose that  $T$ is a \emph{faithful} Hopf
monad, or equivalently, that $\Fc$ is a dominant tensor functor
\cite[Proposition 4.1]{BN}. Then $\ca$
is an exact indecomposable $\ca^T$-module; see Example \ref{dominant-exact}.

\medbreak

We shall use the notation $\caT$ to indicate the category $\End_{\ca^T}(\ca)$ of
 $\ku$-linear \esn module endofunctors of $\ca$.

 Observe that since $\ca^T$ is a finite tensor category (Lemma
\ref{cat-finite}) and $\ca$ is an exact $\ca^T$-module, then every
$\ku$-linear module endofunctor of $\ca$ is exact \cite[Proposition 3.11]{eo}.
Moreover, it follows from \cite[Proposition 3.23]{eo} that the category $\caT$
is again a finite tensor category over $\ku$.  

\medbreak By \cite[Lemma 3.25]{eo},  $\ca$ is
an exact indecomposable  $\caT$-module with respect to the action
$\otb:(\caT)\times \ca\to \ca$ given by
$$F\otb X= F(X),$$
for all $F\in \caT, X\in \ca$.

The third part of the next lemma is a particular case of \cite[ Theorem 
3.27]{eo}.
 We shall  include  the proof for the reader's convenience.

\begin{lema}\label{lema-tec1}  Let $\ca$ be a finite tensor category and
let $T$ be a right exact faithful Hopf monad on $\ca$. Then the following hold:
\begin{itemize}
      \item[1.] The functor $R:\ca^{\op}\to \caT$, $R(X)=R_X$, where for any
$X\in \ca$,
$R_X:\ca\to \ca$ is given by $R_X(Y)=Y\ot X$, is  a full embedding of tensor
categories.

\item[2.] Let, for all $(X, s) \in \ca^T$, $Y \in \ca$,
$$b_{X,Y}: T(X\ot Y) \xrightarrow{\;\;\xi_{X,Y} \;\;} T(X)\ot T(Y)
\xrightarrow{\;\;s\ot\id \;\;} X\ot T(Y).$$
Then $(T, b)$ is an algebra in $\caT$, with multiplication $\mu: T^2 \to T$ and
unit $\eta: \id_\ca \to T$.

\item[3.]  There is  an equivalence of tensor categories
 $\ca^T\simeq \End_{\caT}(\ca)$.
             \end{itemize}

\end{lema}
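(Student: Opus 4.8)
The plan is to treat the three statements in order, using at each stage the structures that the bimonad/Hopf monad and the comonoidal adjunction $(\ele,\Fc)$ provide. For part 1, I would first check that each $R_X$ is genuinely a $\ca^T$-module endofunctor of $\ca$: since the $\ca^T$-action on $\ca$ is $(Z,s)\otb Y = Z\ot Y$, the required module structure on $R_X$ is built from the associativity constraint of the ambient tensor category $\ca$, i.e. $c_{(Z,s),Y} : (Z\ot Y)\ot X \to Z\ot(Y\ot X)$; the module-functor axioms \eqref{modfunctor1}--\eqref{modfunctor2} then reduce to the pentagon and triangle in $\ca$. That $R$ is a tensor functor $\ca^{\op}\to\caT$ follows because $R_{X\ot^{\op} Y} = R_{Y\ot X}$ and composition of module endofunctors $R_X\circ R_Y$ sends $Z$ to $(Z\ot Y)\ot X \cong Z\ot(Y\ot X)$, again via associativity, giving the comonoidal/monoidal coherence. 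Fullness and faithfulness I would get from the fact that $\ca$ is an exact indecomposable $\caT$-module containing $\uno$: a $\ca^T$-module natural transformation $R_X\to R_{X'}$ is determined by its value at $\uno$, which is exactly a morphism $X\to X'$ in $\ca$, and one checks this identification is a bijection compatible with composition — this is where I expect to invoke \cite[Lemma 3.25]{eo} or argue directly via $\Hom_{\caT}(R_X,R_{X'})\cong\Hom_\ca(R_X(\uno),R_{X'}(\uno))$.

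For part 2, the task is to verify that $(T,b)$ is an algebra object in the tensor category $\caT$, where $T$ is viewed as a module endofunctor of $\ca$ via the module structure $b_{(X,s),Y}$ described. First I would confirm $(T,b)$ is a lax (in fact strong, by the Hopf property) $\ca^T$-module endofunctor: axiom \eqref{modfunctor1} for $b$ amounts to combining \eqref{hopfmonad1} (coassociativity of $\xi$) with the associativity of $\ca$ and the $T$-module axiom \eqref{module:monads} for $s$; axiom \eqref{modfunctor2} uses \eqref{hopfmonad2} and \eqref{module:monads}. Next, since $T$ is a monad with $\mu:T^2\to T$, $\eta:\id_\ca\to T$, I must check that $\mu$ and $\eta$ are morphisms of $\ca^T$-module functors, i.e. satisfy \eqref{modfunctor3} with respect to the module structures $b$ on $T$ and $T^2$ (the latter being the composite module structure \eqref{modfunctor-comp} from $R_?$... rather from $T\circ T$). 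For $\mu$ this is precisely the bimonad compatibility \eqref{hopfmonad3} (comonoidality of $\mu$) together with \eqref{module:monads}; for $\eta$ it is the comonoidality of $\eta$, \eqref{hopfmonad11}. Finally the algebra axioms (associativity and unitality of $\mu,\eta$ in $\caT$) are just the monad axioms \eqref{monads}. The main obstacle here is purely bookkeeping: unwinding the composite module structure on $T\circ T$ and matching it with $b$ under $\mu$ without getting lost in the several layers of $\xi$'s and $\mu$'s; I would organize this by drawing the relevant square and peeling off one factor at a time.

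For part 3, I want an equivalence of tensor categories $\ca^T\simeq\End_{\caT}(\ca)$. The conceptual input is Remark \ref{dual-mc}: $\ca$ is an exact indecomposable $\caT$-module, so its dual tensor category $(\caT)^*_\ca = \End_{\caT}(\ca)$ is again finite, and more importantly the double-dual phenomenon says $\ca$ is also an exact module over $\End_{\caT}(\ca)$; I would exhibit a tensor functor $\Phi:\ca^T\to\End_{\caT}(\ca)$ sending $(X,s)$ to the functor "tensor on the left by $(X,s)$", i.e. $Y\mapsto X\ot Y$ with the $\caT$-module structure $F(X\ot Y)\to X\ot F(Y)$ given by $b$ from part 2 applied pointwise (this uses exactly that $(T,b)$, and more generally each $(X,s)$, is compatible with the $\caT$-action $F\otb Y = F(Y)$). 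Checking $\Phi$ is a tensor functor uses the bimonad tensor product on $\ca^T$ and \eqref{hopfmonad3}; checking it is an equivalence I would do by the same argument pattern as part 1 — a $\caT$-module endofunctor of $\ca$ is determined up to canonical iso by its value at $\uno$, which lands in $\ca$, and the $\caT$-module structure endows that object with a $T$-action, producing a quasi-inverse. Alternatively, and perhaps more cleanly, I would deduce part 3 formally from part 1 by a "double centralizer" argument: part 1 identifies $\ca^{\op}$ (hence $\ca$) inside $\caT$ as $\End_{\ca^T}(\ca)$, and then $\End_{\caT}(\ca)=\End_{\End_{\ca^T}(\ca)}(\ca)$ is $\ca^T$ back again by \cite[Theorem 3.27]{eo} — indeed the statement explicitly flags part 3 as a special case of that theorem, so the honest route is to quote it and only spell out that the tensor structure matches, the potential sticking point being the $\op$ bookkeeping from Lemma \ref{op-equiv} and the $X\ot^{\op}Y = Y\ot X$ convention.
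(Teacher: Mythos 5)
Your proposal follows essentially the same route as the paper: part 1 by direct verification (the paper leaves even the fullness to the reader), part 2 by checking the module-functor axioms for $(T,b)$ via \eqref{hopfmonad1}--\eqref{hopfmonad2} and then that $\mu,\eta$ satisfy \eqref{modfunctor3} using \eqref{module:monads}, \eqref{hopfmonad11}, \eqref{hopfmonad3} and the naturality of $\xi$, and part 3 via the pair $\Phi(X,s)=X\ot(-)$ and $\Psi(F,c^F)=(F(\uno),F(\phi)\circ(c^F_{T,\uno})^{-1})$, which the paper itself flags as a special case of \cite[Theorem 3.27]{eo}. The one phrasing to correct is in part 3: the $\caT$-module constraint on $\Phi(X,s)$ is $c^{(X,s)}_{F,Y}=(d^F_{X,Y})^{-1}$, the inverse of the module constraint of the acting functor $(F,d^F)\in\caT$ evaluated at $(X,s)$ (not ``$b$ applied pointwise''), which is available precisely because module functors over $\ca^T$ have invertible constraints.
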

\pf
1. It is not difficult to see that for any $X\in \ca$ the functor $R_X:\ca\to
\ca$ is a
$\ca^T$-module functor and $R_{X\ot Y}= R_Y\circ R_X$.

\medbreak

2.   It follows from  Remark \ref{hopf-adj} that the composition
$$b_{X,Y}: T(X\ot Y) \xrightarrow{\;\;\xi_{X,Y} \;\;} T(X)\ot T(Y)
\xrightarrow{\;\;s\ot\id \;\;} X\ot T(Y)$$
is an isomorphism for any $(X,s)\in \ca^T$, $Y \in \ca$.

To show that $(T, b)$ is a module functor we have to prove that equations
 \eqref{modfunctor1}, \eqref{modfunctor2} are fulfilled.
Let $(X,s), (Y,r)\in \ca^T$ and $Z\in \ca$. In this case the left hand side of
\eqref{modfunctor1} equals
\begin{align*}(\id_X  \otb  b_{Y,Z})b_{X,Y\ot Z}&=
(\id_X\ot (r\ot \id_{T(Z)}) \xi_{Y,Z}) (s\ot \id_{T(Y\ot Z)}) \xi_{X,Y\ot Z}\\
&=(s\ot r\ot \id_{T(Z)}) (\id_X\ot \xi_{Y,Z}) \xi_{X,Y\ot Z}\\
&= (s\ot r\ot \id_{T(Z)})(\xi_{X,Y}\ot\id_{T(Z)}) \xi_{X\ot Y, Z}\\
&= b_{X\ot Y, Z}.
\end{align*}
The third equality follows from \eqref{hopfmonad1}. This proves equation
\eqref{modfunctor1}.
Equation \eqref{modfunctor2} follows  similarly, using \eqref{hopfmonad2}.
Hence $(T, b) \in \caT$.

In the same fashion, now using the relations
\eqref{module:monads}, \eqref{hopfmonad11} and the naturality of $\xi$,
 it is shown that $\mu: (T, b) \otimes (T, b) \to (T, b)$ and $\eta: (\id_\ca,
\id) \to (T, b)$
satisfy condition \eqref{modfunctor3}, that is, they are morphisms of
 $\ca^T$-module functors. This implies that $(T, b)$ is an algebra in $\caT$ as
claimed.

\medbreak
3.  Define $\Phi: \ca^T \to \End_{\caT}(\ca)$ by $\Phi(X,s)(Y)=X\ot Y$ for all
$(X,s)\in \ca^T, Y\in\ca$. The functor $\Phi(X,s)$ is indeed a $\caT$-module
functor
as follows. If $(F, d^F)\in \caT$, $Y\in \ca$ define
$$c^{(X,s)}_{F, Y}: \Phi(X,s)(F\otb Y)\to F\otb \Phi(X,s)(Y), \quad
c^{(X,s)}_{F, Y}=
\big(d^F_{X,Y}\big)^{-1}.$$
Equation \eqref{modfunctor1} amounts to
$$F(c^{(X,s)}_{G, Y}) c^{(X,s)}_{F, G(Y)}= c^{(X,s)}_{F\circ G, Y}.$$
This last equation is equivalent to equation \eqref{modfunctor-comp}.
Define $\Psi:\End_{\caT}(\ca)\to \ca^T$ by $\Psi(F, c^F)=(F(\uno), s^F)$,
where
$$s^F= F(\phi)\circ \big(c^F_{T,\uno}\big)^{-1}.$$
 It  is straightforward to see that
that $\Phi$ and $\Psi$ are tensor functors and
 thus  they give  the desired  equivalence of tensor categories.
\epf

\begin{exa}\label{bimodulos} Let $\ca$ be a finite tensor category and
let $T$ be a right exact faithful Hopf monad on $\ca$. Suppose that the
forgetful functor $\Fc: \ca^T \to \ca$  is perfect, that is, it has an exact
right adjoint. Let $(A, \sigma) \in \mathcal Z(\ca^T)$ be the induced central
algebra of  $T$. 

As explained in Subsection \ref{tensor-functor}, the category
$(\ca^T)_A$ of right $A$-modules in $\ca^T$ is a tensor category and there is an
equivalence of tensor categories $K: (\ca^T)_A \to \ca$ such that $KF_A = \Fc$,
where $F_A: \ca^T \to (\ca^T)_A$ is the free module functor.

 Let ${}_A(\ca^T)_A$ be the category  of
$A$-bimodules in $\ca^T$; ${}_A(\ca^T)_A$ is a tensor category with tensor
product $\otimes_A$ and unit object $A$.  Observe that the functor $K$ induces
an equivalence of $\ca^T$-module categories $(\ca^T)_A \simeq
\ca$. We thus obtain an equivalence of tensor categories $\caT \simeq
({}_A(\ca^T)_A)^{\op}$; see \cite[Proof of Lemma 3.25]{eo}.

Under this equivalence, the full embedding in Lemma \ref{lema-tec1}  (1)
corresponds to the full embedding $(\ca^T)_A \to {}_A(\ca^T)_A$ induced by the
half-braiding $\sigma$.
\end{exa}

\section{Module categories over $\ca^T$}\label{mc-cat}

Along this section $\ca$ will be a monoidal category. For any Hopf monad $T$ on $\ca$
we shall give a construction of module categories over $\ca^T$ from
module categories over $\ca$. 
The monad structure is denoted by $(T,\mu,\eta)$
and the comonoidal structure by $(T,\xi,\phi)$.

\subsection{$T$-equivariant module categories}\label{subsection:t-eq}

Let $T$ be a Hopf monad over  the monoidal category $\ca$
 and let $\Mo$ be a $\ca$-module.

The functor $T$, being  comonoidal, induces a structure of lax $\ca$-module
category on $\Mo$, denoted $\Mo(T)$. The action on $\Mo(T)$ is given in the form
$X\otb^T M = T(X) \otb M$, and the  associativity and unit morphisms are defined
by $$m_{X,Y,M}^T = m_{T(X),T(Y), M} \, (\xi_{X,Y}\otb\, \id_M), \quad l^T_M =
l_M (\phi \otb \id_M),$$
for all $X, Y\in \ca$, $M\in \Mo$.
See Subsection \ref{h-monads}.

\begin{lema}\label{structure of square}  Let  $(U,c):\Mo\to \Mo(T)$ be a
lax $\ca$-module functor. Then the pair $(U^2, d):\Mo\to \Mo(T)$ is a lax
$\ca$-module functor, where
\begin{equation}\label{d:map} d_{X,M}=(\mu_X\otb
\id_{U^2(M)})c_{T(X),U(M)}U(c_{X,M}),
\end{equation}
for all $X\in\ca$, $M\in\Mo$.
\end{lema}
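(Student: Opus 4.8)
The plan is to verify directly that the pair $(U^2, d)$ satisfies the two axioms \eqref{modfunctor1} and \eqref{modfunctor2} for a lax module functor from $\Mo$ to $\Mo(T)$, using that $(U,c)$ satisfies them and that the relevant structural data on $\Mo(T)$ is built from the comonoidal structure $(\xi, \phi)$ and the monad multiplication $\mu$. The guiding observation is that $U^2 \colon \Mo \to \Mo(T)$ should be thought of as the composite $\Mo \xrightarrow{(U,c)} \Mo(T) \xrightarrow{(U,c)(T)} \Mo(T)(T)$ followed by the lax module functor $\Mo(T)(T) \to \Mo(T)$ induced by the monad multiplication $\mu \colon T^2 \to T$ (which is comonoidal, by \eqref{hopfmonad3}). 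Composing these via the composition rule \eqref{modfunctor-comp} produces precisely the formula \eqref{d:map}, so in principle the lemma follows from the general facts that composites of lax module functors are lax module functors and that a comonoidal transformation between comonoidal functors induces a lax module functor between the associated lax module structures. However, since the paper has not explicitly recorded these two general facts, I would instead present the verification by hand.

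First I would fix $X, Y \in \ca$ and $M \in \Mo$ and write out both sides of \eqref{modfunctor1} for $(U^2, d)$, where the associativity constraint on $\Mo(T)$ is $m^T_{X,Y,M} = m_{T(X),T(Y),M}(\xi_{X,Y} \otb \id_M)$. Expanding $d_{Y,M}$ and $d_{X, Y\otb M}$ according to \eqref{d:map}, the left-hand side $(\id_X \otb^T d_{Y,M})\, d_{X, Y\otb M}\, U^2(m_{X,Y,M})$ becomes a long composite involving $T(X) \otb$ applied to terms, two copies of $\mu$, and factors $c_{T(X),U(M)}$, $c_{T(X), U(Y\otb M)}$, etc. I would then push the inner copy of the axiom \eqref{modfunctor1} for $(U,c)$ (applied once at the level of $U$ with inputs $T(X), T(Y)$, and once after applying $U$ to the corresponding identity with inputs $X, Y$), use naturality of $c$ and of $\mu$ to move the $\mu$'s past the $c$'s, and finally invoke the comonoidal compatibility \eqref{hopfmonad3} of $\mu$ with $\xi$ — namely $\xi_{X,Y}\mu_{X\ot Y} = (\mu_X \ot \mu_Y)\xi_{T(X),T(Y)}T(\xi_{X,Y})$ — to collapse the result to $m^T_{X,Y,U^2(M)}\, d_{X\otimes Y, M}$. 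The verification of \eqref{modfunctor2} is analogous but shorter: it uses \eqref{modfunctor2} for $(U,c)$ twice together with the identity $\phi\mu_\uno = \phi T(\phi)$ and $\phi\eta_\uno = \id_\uno$ from \eqref{hopfmonad11}, and the naturality of $\phi$ and $l$.

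The main obstacle I anticipate is purely bookkeeping: keeping track of where the functor $T$ (equivalently, the action $T(X)\otb -$ on $\Mo(T)$) is applied versus where $U$ is applied, since $d_{X,M}$ already mixes an application of $U$ to $c_{X,M}$ with the structural maps $\mu_X \otb -$ and $c_{T(X), U(M)}$, and the target category $\Mo(T)$ has its action twisted by $T$. Concretely, one must be careful that in the composite $U^2 = U\circ U$ the first $U$ is regarded as landing in $\Mo(T)$ (so its module-functor structure map at $(X,M)$ is $c_{X,M}\colon U(X\otb M)\to T(X)\otb U(M)$), while the second $U$ is applied to objects of $\Mo$, so one feeds it the $\ca$-module structure, not the $\Mo(T)$-structure; the map $d$ is exactly the reconciliation of these, and the two applications of \eqref{modfunctor1} for $(U,c)$ occur at genuinely different arguments. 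Once the diagram is drawn with these roles made explicit, each equality step is an instance of either an axiom already available, a naturality square, or one of \eqref{hopfmonad1}–\eqref{hopfmonad11}, and no essential difficulty remains.
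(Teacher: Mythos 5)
Your proposal is correct and follows essentially the same route as the paper: a direct verification of \eqref{modfunctor1} and \eqref{modfunctor2} for $(U^2,d)$ using the two axioms for $(U,c)$ (applied once at the arguments $T(X),T(Y)$ and once under $U$ at the arguments $X,Y$), naturality of $c$ and $m$, and the comonoidality relations \eqref{hopfmonad3} and \eqref{hopfmonad11}. The conceptual remark that $d$ is the composite structure coming from $(U,c)$, its image under $T$, and the lax module functor induced by the comonoidal transformation $\mu$ is a nice addition, but the executed argument coincides with the paper's.
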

\pf
%Let us prove that $d$ satisfies \eqref{modfunctor2}.
 For every $M\in \Mo$ we have
\begin{align*} l^T_{U^2(M)} d_{\bf 1, M} & = l_{U^2(M)} (\phi \otimes
\id_{U(M)}) d_{\bf 1, M} \esn \\
&  l_{U^2(M)} (\phi\mu_\uno\otb \id_{U^2(M)}) c_{T(\uno), U(M)} U(c_{\uno, M})\\
&= l_{U^2(M)} (\phi T(\phi) \otb \id_{U^2(M)}) c_{T(\uno), U(M)} U(c_{\uno,
M})\\
&= l_{U^2(M)} (\phi  \otb \id_{U^2(M)}) c_{\uno, U(M)} U(\phi\otb \id_{ U(M) })
U(c_{\uno, M})\\
&= U(l_{U(M)}) U( (\phi\otb \id_{ U(M) })c_{\uno, M} )\\
&= U^2(l_M),
\end{align*}
 the second equality by \eqref{hopfmonad11}, the third equality by the
naturality of $c$, and the fifth because $c$ is a lax module functor. Hence $d$
satisfies \eqref{modfunctor2}. \esn
%Now, let us prove  that $d$ satisfies \eqref{modfunctor1}.
 Now  the left hand side of equation
 \eqref{modfunctor1} equals
\begin{multline*} (\mu_X\otb  ( \mu_Y \otb \id_{U^2(M)})) (\id_{ T^2(X)} \otb
c_{T(Y), U(M)}U(c_{Y,M}))\\
\times c_{T(X),U(T(Y)\otb M))}U(c_{X,Y \otb M})U^2(c_{m_{X,Y,M}}).
\end{multline*}
 On the other hand,  the right hand side of
 \eqref{modfunctor1} equals
\begin{align*} & m_{T(X), T(Y),U^2(M)} (\xi_{X,Y}\mu_{X\ot Y}\otb\id_{U^2(M)})
c_{T(X\ot Y), U(M)} U(c_{X\ot Y,M})\\
& =m_{T(X),T(Y),U^2(M)} ((\mu_X\ot
\mu_Y)\xi_{T(X),T(Y)}T(\xi_{X,Y})\otb\id_{U^2(M)}) \\
& \qquad  \times c_{T(X\ot Y), U(M)} U(c_{X\ot Y,M}) \\
& = m_{T(X),T(Y),U^2(M)} ((\mu_X\ot \mu_Y)\xi_{T(X),T(Y)}\otb\id_{U^2(M)})
\\
& \qquad   \times  c_{T(X)\ot T(Y),M}U(\xi_{X,Y}\ot\id_M)U(c_{X\ot Y,M}) \\
& =(\mu_X\otb (\mu_Y\otb\id_{U^2(M)})) m_{T^2(X),T^2(Y),U^2(M)}
(\xi_{T(X),T(Y)}\otb\id_{U^2(M)})
\\
& \qquad   \times  c_{T(X)\ot T(Y),M}U(\xi_{X,Y}\otb\id_M)U(c_{X\ot Y,M}) \\
& =(\mu_X\otb (\mu_Y\otb\id_{U^2(M)})) (\id_{T^2(X)}\otb c_{T(Y), U(M)})c_{T(X),
T(Y)\otb U(M)}
\\
& \qquad   \times  U(m_{T(X),T(Y),U(M)}(\xi_{X,Y}\otb\id_M)c_{X\ot Y,M}) \\
& =(\mu_X\otb( \mu_Y\otb\id_{U^2(M)})) (\id_{T^2(X)}\otb c_{T(Y), U(M)})c_{T(X),
T(Y)\otb U(M)}
\\
& \qquad   \times  U((\id_X\otb c_{Y,M})c_{X,Y\otb M})U^2(c_{m_{X,Y,M}})
\end{align*}
The second equality follows from \eqref{hopfmonad3}, the third equality follows
from the
naturality of $c$,  the fourth by the naturality of $m$, the fifth and sixth
equalities follow
 because $(U,c)$
is a lax module functor. \epf

\begin{defi}\label{defi-temc} A \emph{$T$-equivariant $\ca$-module category} is a triple $(\Mo,
U, c)$, where:
\begin{itemize}\item $\Mo$ is a $\ca$-module category,
\item $(U, c): \Mo \to \Mo(T)$ is a lax $\ca$-module functor,
\item $(U,\nu,u)$ is a monad on $\Mo$ and both natural transformations
$\nu: U^2 \to U$ and $u: \Id_{ \Mo } \to U$ are morphisms of $\ca$-module
functors.
\end{itemize}

\end{defi}
\begin{rmk}  The module functor structures of $\Id_{\Mo}:\Mo\to \Mo(T)$ and
$U^2: \Mo \to \Mo(T)$, implicit in the third
condition of the definition, are the ones given by $\eta \otb \id$ and Lemma
\ref{structure of square}, respectively.
%The module functor structure of $U^2$, implicit in the third
%condition of the definition, is the one given by  Lemma \ref{structure of
%square}.  The
%module structure of the identity functor  $ \Id_{ \Mo }:\Mo\to \Mo(T)$, also in
%the
%third condition, is given by
%$\eta_{-}\otb \id_{-}$.
\end{rmk}

\medbreak Let
 $(\Mo, U, c)$  be a $T$-equivariant $\ca$-module.
 Since $U$ is a monad on $\Mo$, we may consider
the category $\Mo^U$ of $U$-modules in $\Mo$.
The objects $(M, s) \in \Mo^U$ will be called $U$-equivariant objects.

\begin{exa}  The monoidal category $\ca$   is a module category over itself and
 $(T,\xi):\ca\to \ca(T)$ is a lax $\ca$-module functor. It follows from
 \eqref{hopfmonad3} that $\mu:(T^2,d)\to (T,\xi)$ is a natural transformation of
module functors, hence $\ca$ is a $T$-equivariant $\ca$-module category.
%The category of $T$-equivariant objects coincides with the category of
$T$-modules $\ca^T$.
\end{exa}

\begin{exa}  (Module categories over equivariantizations.)  Let $G$ be a finite
group and $\rho: \underline{G}\to \underline{\text{Aut}}_\otimes(\ca)$
 be an action of $G$ on $\ca$.
As explained in Example \ref{hm-equiv}, the endofunctor $T^\rho= \oplus_{\sigma\in G} \,
\rho^\sigma$
 has a structure of Hopf monad over $\ca$ such that the equivariantization
$\ca^G$ is tensor equivalent to $\ca^{T^\rho}$.

Let $F\subseteq G$ be a subgroup. Recall that  an $F$-equivariant
$\ca$-module \cite{ENO2}  is a module category $\Mo$ over
$\ca$ endowed with a family of module functors
$(U_\sigma, c^\sigma):\Mo\to \Mo(\rho^\sigma)$ for any $\sigma\in F$
 and a family of natural isomorphisms $\mu_{\sigma,\tau}:(U_\sigma\circ U_\tau,
b)\to (U_{\sigma\tau}, c^{\sigma \tau})$ $\sigma, \tau\in F$ such that
\begin{equation}\label{mod-equi1} {(\mu_{\sigma,\tau \nu})}_M\circ U_\sigma(
\mu_{\tau,\nu})_M={(\mu_{\sigma \tau,\nu})}_M\circ
{(\mu_{\sigma,\tau})}_{U_\nu(M)},
\end{equation}
\begin{equation}\label{mod-equi2}
c^{\sigma\tau}_{X,M}\circ(\mu_{\sigma,\tau})_{X\otb M}=
((\gamma_{\sigma,\tau})_X\otb (\mu_{\sigma,\tau})_M)\circ
c^\sigma_{\rho^\tau(X),U_\tau(M)}\circ
U_\sigma(c^\tau_{X,M}),
\end{equation}
for all $\sigma,\tau,\nu\in F$, $X\in \ca$, $M\in \Mo$. The functor $U:\Mo\to
\Mo(T^\rho)$,
$U=\oplus_{\sigma\in F} U_\sigma$ makes the category $\Mo$ $T^\rho$-equivariant.
The category of $U$-equivariant objects in $ \Mo$ coincides with the category of
$F$-equivariant objects in $\Mo$ in the sense of \cite{ENO2}.
\end{exa}

\begin{exa}\label{exa:mc-hopf}  (Module categories over $H$-mod.)   Consider  the tensor category 
$\ca = \vect_\ku$ of finite dimensional $\ku$-vector spaces. Let $H$ be a finite
dimensional Hopf algebra over $\ku$. Then $H \otimes -: \ca \to \ca$ is a
$\ku$-linear right exact Hopf monad on $\ca$ and there is an equivalence of
tensor categories $\ca^{H \otimes -} \simeq H$-mod, where $H$-mod denotes the
category of finite dimensional representations of $H$. Moreover, this assignment
defines an equivalence between finite dimensional Hopf algebras over $\ku$ and
$\ku$-linear Hopf monads on $\ca$ \cite[Lemma 2.5]{BN}.

Let $\Mo = \ca = \vect_\ku$ denote the canonical $\ca$-module category (which
is, up to equivalence, the only indecomposable $\ca$-module category). Let also 
$H$ be a finite dimensional Hopf algebra over $\ku$ and $T = H \otimes -$ the
associated Hopf monad. Then we have:

\begin{lema} There is an equivalence between the categories of
$T$-equivariant $\ca$-module category structures on $\Mo$ and finite
dimensional left $H$-comodule algebras. \end{lema}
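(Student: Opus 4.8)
The plan is to unravel what a $T$-equivariant $\ca$-module structure on $\Mo=\vect_\ku$ amounts to, and to match the resulting data with the axioms for a left $H$-comodule algebra. Since $\ca = \vect_\ku$ and $\Mo$ is the canonical module category, every $\ku$-linear exact endofunctor of $\Mo$ is naturally isomorphic to $V \otimes -$ for a vector space $V$, and functorial constraints boil down to linear maps. Concretely, given a $T$-equivariant structure $(\Mo, U, c)$, the monad $U$ is (isomorphic to) $A \otimes -$ for a vector space $A$, with multiplication $\nu$ and unit $u$ making $(A, \nu, u)$ an associative unital $\ku$-algebra; this is just the standard correspondence between $\ku$-linear monads on $\vect_\ku$ and algebras, so $\Mo^U \simeq A\rep$. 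What remains is to extract the $H$-comodule structure from the lax $\ca$-module functor datum $(U,c)\colon \Mo\to \Mo(T)$ and the condition that $\nu, u$ be module transformations.

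First I would analyze the lax module functor $(U,c)\colon\Mo \to \Mo(T)$. Recall $X \otb^T M = T(X) \otb M = (H\otimes X)\otimes M$ in $\Mo=\vect_\ku$, so for $X = \ku$ (the unit, hence the generating object of $\ca=\vect_\ku$) and $M \in \Mo$ the constraint $c_{\ku, M}\colon U(M) \to (H\otimes \ku)\otimes M \cong H \otimes M$ is, by naturality and $\ku$-linearity in $M$, determined by a single linear map $\delta\colon A \to H\otimes A$ (taking $M=A$ and chasing the identity, or equivalently taking $M=\ku$ and using that $U(\ku)=A$). The coherence axioms \eqref{modfunctor1} and \eqref{modfunctor2} for $(U,c)$, specialized to $X=Y=\ku$ and using the explicit comonoidal structure $(\xi,\phi)$ of $T=H\otimes-$ (where $\xi$ encodes the coproduct of $H$ and $\phi$ the counit), translate precisely into the coassociativity and counit axioms for $\delta$; that is, $\delta$ makes $A$ a left $H$-comodule. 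I would carry this out by writing both sides of \eqref{modfunctor1} as linear maps $A\otimes(\text{stuff}) \to H\otimes H\otimes(\text{stuff})$ and reading off $(\id_H\otimes\delta)\delta = (\Delta_H\otimes\id_A)\delta$, and similarly \eqref{modfunctor2} gives $(\varepsilon_H\otimes\id_A)\delta = \id_A$.

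Next I would impose that $\nu\colon U^2 \to U$ and $u\colon \Id \to U$ are morphisms of $\ca$-module functors, i.e. satisfy \eqref{modfunctor3}. For $u$, the source $\Id_\Mo \to \Mo(T)$ carries the module structure $\eta\otb\id$ (with $\eta$ the unit of the Hopf monad $T=H\otimes-$, which is $x \mapsto 1_H\otimes x$), and \eqref{modfunctor3} applied at $X=\ku$ says exactly that the unit $u\colon \ku \to A$ is a morphism of $H$-comodules where $\ku$ has the trivial comodule structure — equivalently $\delta(1_A) = 1_H\otimes 1_A$. For $\nu$, the source $U^2$ carries the module structure $d$ of Lemma \ref{structure of square}; specializing \eqref{d:map} at $X=\ku$, using that $\mu$ of $T=H\otimes-$ is $m_H\otimes\id$, one computes that the $U^2$-comodule structure on $A\otimes A$ is $(m_H\otimes\id_{A\otimes A})(\id_H\otimes\delta_A\otimes\id_A)(\delta_A\otimes\id_A)$, i.e. the standard tensor-product comodule structure on $A\otimes A$. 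Then \eqref{modfunctor3} for $\nu$ says precisely that the multiplication $A\otimes A \to A$ is $H$-colinear, i.e. $\delta(ab) = \delta(a)\delta(b)$ in $H\otimes A$. Altogether $(A, \delta)$ is a left $H$-comodule algebra. Conversely, given a left $H$-comodule algebra $(A,\delta)$, setting $U = A\otimes-$, defining $c_{X,M}\colon (A\otimes X)\otimes M \to (H\otimes X)\otimes M$ via $\delta$ (twisted appropriately in the $X$-slot, which in $\vect_\ku$ is just a linear iso rearrangement), and taking $\nu, u$ from the algebra structure, reverses every step; one checks $(U,c)$ is a lax $\ca$-module functor and $\nu,u$ are module transformations by the same computations run backwards. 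Finally I would note that this construction is functorial and sends isomorphic comodule algebras to equivalent equivariant structures and conversely, yielding the asserted equivalence of categories.

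The main obstacle I anticipate is bookkeeping: correctly identifying the module-functor structure $d$ on $U^2$ from Lemma \ref{structure of square} and matching it, via the explicit comonoidal structure of the Hopf monad $H\otimes-$, with the honest tensor-product comodule structure on $A\otimes A$ — getting the interplay of $\xi$, $\mu$, and the flips in $\vect_\ku$ exactly right is the one place a sign-or-order slip would break the argument. Everything else is a routine dictionary between $\ku$-linear functorial data on $\vect_\ku$ and linear-algebraic data on $A$ and $H$.
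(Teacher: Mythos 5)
Your proposal is correct and follows essentially the same route as the paper's proof: identify $\ku$-linear monads on $\vect_\ku$ with algebras $A$ via $U\simeq A\otimes -$, read the lax module constraint $c$ as a map $A\to H\otimes A$ whose coherence axioms are coassociativity and counitality, and translate the requirement that $\nu$ and $u$ be module transformations into colinearity of the multiplication and unit of $A$. Your version is merely more explicit about the $U^2$ structure from Lemma \ref{structure of square}, and it in fact gets the handedness (left $H$-comodule) consistent with the lemma's statement, whereas the paper's own proof writes $c:A\to A\otimes H$ and speaks of right comodules.
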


\begin{proof} Every finite dimensional $\ku$-algebra $A$ induces canonically a
$\ku$-linear monad $U = A \otimes_\ku -$ on $\vect_\ku$. Conversely, every such
monad
$U$ is isomorphic to the one induced by a finite dimensional $\ku$-algebra $A$:
indeed, as in the proof of \cite[Lemma 2.5]{BN}, we get that $U \simeq U(\bf 1)
\otimes -$
as $\ku$-linear functors, and since  $U$ is a monad then $A = U(\bf 1)$ is an
algebra \cite[Example 1.2]{BV}.

Under this correspondence, the conditions on $U = A \otimes -$ in
Definition \ref{defi-temc}
correspond to the condition that $A$ is a left $H$-comodule algebra. Indeed, a
structure $c$ of lax $\ca$-module functor on $U$ is uniquely determined by a map
$c: A \to A \otimes H$ making $A$ into a right $H$-comodule, in view of
\eqref{modfunctor1} and \eqref{modfunctor2}. The requirement that the
multiplication and unit of $U = A \otimes -$ are morphisms of module functors
amounts to the condition that the multiplication and unit of $A$ are comodule maps. 
Thus $A$ is a right $H$-comodule algebra, as claimed.
\end{proof}
\end{exa}

 \begin{defi}  Let $(\Mo, U, c)$, $(\Mo, \widetilde{U}, \widetilde{c})$ be two
$T$-equivariant
structures on the module category $\Mo$.

\begin{itemize}
 \item  A \emph{morphism of $T$-equivariant structures}
 $\alpha: (\Mo, U, c)\to
(\Mo, \widetilde{U}, \widetilde{c})$ is a monad morphism $\alpha:U\to
\widetilde{U}$
such that $\alpha$ is also a morphism of lax module functors.

\item A morphism of $T$-equivariant structures
 $\alpha: (\Mo, U, c)\to
(\Mo, \widetilde{U}, \widetilde{c})$ is \emph{surjective} if $\alpha_M$ is
surjective
for any $M\in \Mo$.
\item  We say that a
$T$-equivariant
structure $(\Mo, U, c)$ is \emph{simple} if any surjective
morphism of $T$-equivariant structures $\alpha:U\to \widetilde{U}$ is an
isomorphism.
\end{itemize}

\end{defi}

\begin{teo}\label{U-equivariant modcat} Let $\Mo$ be a $T$-equivariant
$\ca$-module
with lax module functor $(U,c):\Mo\to \Mo(T)$. Then the following holds.
\begin{enumerate}
 \item[1.] The category $\Mo^U$ has a structure of
 $\ca^T$-module category.

\item[2.]  If $\alpha: (\Mo, U, c)\to
(\Mo, \widetilde{U}, \widetilde{c})$ is a morphism of $T$-equivariant
structures, the functor
$\alpha^*:\Mo^{\widetilde{U}}\to \Mo^U $, $\alpha^*(M,s)=(M, s\circ\alpha_M)$
for all $
(M,s)\in \Mo^{\widetilde{U}}$, is a $\ca^T$-module functor.

\item[3.]   If  $\Mo^U$
 is a simple  $\ca^T$-module category then $(\Mo, U, c)$ is simple.
\end{enumerate}
\end{teo}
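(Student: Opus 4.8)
### Proof Plan

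The plan is to prove the three parts of the theorem in order, since part 3 relies on the construction in part 1.

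\textbf{Part 1.} First I would construct the $\ca^T$-module structure on $\Mo^U$. Recall that an object of $\Mo^U$ is a pair $(M,s)$ with $s: U(M) \to M$ satisfying the action axioms \eqref{module:monads}. For $(X,r) \in \ca^T$ and $(M,s) \in \Mo^U$, the natural candidate for the action is
\[
(X,r) \otb (M,s) := \big( X \otb M, \; s \circ (r \otb \id_M) \circ c_{X,M}^{-1} \big),
\]
wait — here I should be careful about the direction of $c$. Since $(U,c): \Mo \to \Mo(T)$ has $c_{X,M}: U(X \otb M) \to X \otb^T U(M) = T(X) \otb U(M)$, the action map on $X \otb M$ should be the composite $U(X \otb M) \xrightarrow{c_{X,M}} T(X) \otb U(M) \xrightarrow{r \otb \id} X \otb U(M) \xrightarrow{\id_X \otb s} X \otb M$. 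I would verify this is a $T^{\,}$-wait, a $U$-action on $X \otb M$ using that $s$ is a $U$-action, that $c$ is a (lax) module functor, and that $r$ is a $T$-action; the key point is that the associativity/unit constraints of $\Mo^U$ as a $\ca^T$-module are inherited from those of $\Mo$, and the coherence conditions \eqref{left-modulecat1}, \eqref{left-modulecat2} follow from the corresponding ones for $\Mo$ together with \eqref{modfunctor1}, \eqref{modfunctor2} for $c$ and the comonoidal axioms \eqref{hopfmonad1}, \eqref{hopfmonad2} relating $\xi$ to the tensor product in $\ca^T$. Exactness and bilinearity of the action come from those of $\otb$ on $\Mo$ and the fact that $\Fc: \ca^T \to \ca$ is exact. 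I expect this to be the longest but most routine part, essentially a bookkeeping diagram chase already modeled on Lemma \ref{structure of square}.

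\textbf{Part 2.} For a morphism $\alpha: (\Mo, U, c) \to (\Mo, \widetilde U, \widetilde c)$ of $T$-equivariant structures, the functor $\alpha^*: \Mo^{\widetilde U} \to \Mo^U$ is well-defined on objects because $\alpha$ is a monad morphism (this is exactly \cite[Lemma 1.7]{BV} applied inside $\Mo$, as recalled in Subsection \ref{h-monads}). To make $\alpha^*$ into a $\ca^T$-module functor I would use that $\alpha$ is also a morphism of lax module functors, i.e. satisfies \eqref{modfunctor3} with respect to $c$ and $\widetilde c$: this says precisely that the $\ca^T$-action map on $(X,r) \otb \alpha^*(M,s)$ agrees with $\alpha^*$ applied to the action on $(X,r)\otb(M,s)$, so the identity natural transformation furnishes the module functor structure $\alpha^*((X,r)\otb(M,s)) = (X,r)\otb \alpha^*(M,s)$, and the coherence \eqref{modfunctor1}, \eqref{modfunctor2} for this structure is trivial.

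\textbf{Part 3.} Suppose $\Mo^U$ is a simple $\ca^T$-module category, and let $\alpha: U \to \widetilde U$ be a surjective morphism of $T$-equivariant structures; I must show $\alpha$ is an isomorphism. By part 2, $\alpha^*: \Mo^{\widetilde U} \to \Mo^U$ is a $\ca^T$-module functor. The strategy is to show that the essential image of $\alpha^*$ is a submodule category of $\Mo^U$, hence (by simplicity) all of $\Mo^U$, and that $\alpha^*$ is fully faithful; combined, this forces $\alpha$ to be an isomorphism of monads. Because $\alpha_M: U(M) \to \widetilde U(M)$ is surjective for every $M$, and we are over a field of characteristic $0$ so all objects have finite length, $\alpha^*(M,s) = (M, s \circ \alpha_M)$ determines the pair $(M, \widetilde s)$ it came from up to the constraint that $\widetilde s \circ \alpha_M = s \circ \alpha_M$; surjectivity of $\alpha_M$ gives uniqueness of such $\widetilde s$, so $\alpha^*$ is \emph{injective on objects} and one checks it is full and faithful on morphisms (a morphism in $\Mo^U$ between objects in the image is automatically a morphism in $\Mo^{\widetilde U}$, again using epimorphness of $\alpha$). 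Thus $\Mo^{\widetilde U}$ is, via $\alpha^*$, equivalent to a Serre submodule subcategory of $\Mo^U$; the inclusion is a module functor, so by simplicity it is an equivalence $\Mo^{\widetilde U} \simeq \Mo^U$ over $\ca^T$ compatible with the forgetful functors to $\Mo$. Finally, by the reconstruction of monad morphisms from functors over the base (the last sentence of Subsection \ref{h-monads}, applied to $\Mo$ in place of $\ca$), an equivalence $\Mo^{\widetilde U} \simeq \Mo^U$ commuting with the forgetful functors to $\Mo$ must be induced by an isomorphism of monads $U \cong \widetilde U$, and by uniqueness this isomorphism is $\alpha$. Hence $(\Mo,U,c)$ is simple.

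\textbf{Main obstacle.} The delicate point is part 3: one has to be careful that "surjective morphism of monads implies $\alpha^*$ is injective on objects and fully faithful" — the subtlety is that a morphism $f: \alpha^*(M,s) \to \alpha^*(N,t)$ in $\Mo^U$ need not a priori commute with the $\widetilde U$-actions, and one must use surjectivity of $\alpha$ (an epimorphism in the abelian category $\Mo$) to cancel it on the left. Making the passage "$\alpha^*$ fully faithful with Serre image $\Rightarrow$ $\alpha$ an isomorphism" precise — rather than merely "$\Mo^{\widetilde U}$ and $\Mo^U$ are equivalent" — is where the argument must be handled with care, and is the step I would write out in full detail.
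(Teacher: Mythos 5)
Your parts 1 and 2 follow the paper's proof essentially verbatim: the action is $(X,r)\otb(M,s)=(X\otb M,(r\otb s)\,c_{X,M})$ (your three-step composite is that same map), and $\alpha^*$ is a module functor with identity structure morphisms precisely because condition \eqref{modfunctor3} for $\alpha$ gives $\alpha^*((X,r)\otb(M,s))=(X,r)\otb\alpha^*(M,s)$. One omission in part 1: after using naturality of $c$ and the action axioms for $r$ and $s$, the verification that $(r\otb s)c_{X,M}$ is a $U$-action reduces to the identity $(\id\otb\nu_M)\,d_{X,M}=c_{X,M}\,\nu_{X\otb M}$, i.e.\ to the hypothesis that $\nu:(U^2,d)\to(U,c)$ is a morphism of $\ca$-module functors. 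This is exactly the defining condition of $T$-equivariance (Definition \ref{defi-temc}) and is the ingredient that makes the computation close; it is missing from your list, which only cites the axioms for $r$, $s$ and $c$.

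The genuine gap is in the final step of part 3. Up to ``$\alpha^*$ is fully faithful, its essential image is a submodule category, hence by simplicity $\alpha^*$ is an equivalence'' you agree with the paper. But the conclusion ``an equivalence commuting with the forgetful functors must be induced by an isomorphism of monads, and by uniqueness this isomorphism is $\alpha$'' does not follow from the correspondence recalled in Subsection \ref{h-monads}: that correspondence matches monad morphisms with functors \emph{strictly} over the base, whereas a quasi-inverse $\Fc$ of $\alpha^*$ only commutes with the forgetful functors up to a natural isomorphism $\gamma:\alpha^*\circ\Fc\to\Id$, so it is not a priori of the form $\beta^*$, and no uniqueness statement identifies anything with $\alpha$. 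The paper does real work at exactly this point: it transports the $\widetilde U$-action along $\gamma$ to define a strictified functor $\widehat{\Fc}(M,s)=(M,\gamma_M\, s^{\Fc}\,\widetilde U(\gamma_M^{-1}))$ over $\Mo$, invokes \cite[Lemma 1.6]{BV} to write $\widehat{\Fc}=\beta^*$ for a monad morphism $\beta$, and then proves $\beta_M\alpha_M=\id_M=\alpha_M\beta_M$ by deducing $s\,\beta_M\,\alpha_M=s$ from the commutativity of \eqref{diagram11}, specializing to the free modules $(U(M),\nu_M)$, and composing with $U(u_M)$. You correctly flagged this as the delicate step, but the plan you give for it (``by uniqueness this isomorphism is $\alpha$'') would not close; the strictification of the quasi-inverse and the free-module computation are the missing ideas.
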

\pf 1. Let us define the action $\otb:\ca^T\times \Mo^U\to \Mo^U$ as follows.
Let $(X,r)$ be an object in  $\ca^T$ and $(M,s)$ be a $U$-equivariant object in
 $\Mo$, then $(X,r)\otb (M,s)=(X\otb M, (r\otb s) c_{X,M})$.  Let us
prove that the object $(X\otb M, (r\otb s) c_{X,M})$ is $U$-equivariant. For
this,
we need to show that the map $t=(r\otb s) c_{X,M}$
 satisfies $t\circ U(t)=t\circ \nu_{X\otb M}$. Indeed:
\begin{align*}t\circ U(t)&=(r\otb s) c_{X,M} U(r\otb s)U(c_{X,M})
\\ &=(rT(r)\otb sU(s))c_{T(X),U(M)} U(c_{X,M})\\
&=(r \mu_X\otb s\nu_M)c_{T(X),U(M)} U(c_{X,M})\\
&=(r \otb s \nu_M) d_{X,M}= (r \otb s) c_{X,M} \nu_{X\otb M}=t\circ \nu_{X\otb
M}.
\end{align*}
The first equality follows from the naturality of $c$,
the second follows because $(X,r)$ is a $T$-module
and $(M,s)$ is $U$-equivariant. The third equality follows
 from the definition of $d$, see equation \eqref{d:map},
 the last equality follows since $\nu$ is a module functor.
The associativity  and unit isomorphisms are the obvious ones.

\medbreak

2.  Since $\alpha: U \to \widetilde U$ is a morphism of monads, it follows from
 \cite[Lemma 1.7]{BV} that $\alpha$ induces a functor
$\alpha^*: \Mo^{\widetilde{U}} \to \Mo^U$ such that $\Fc_U \alpha^* =
\Fc_{\widetilde U}$.

Let us show that $ \alpha^*$ is a $\ca^T$-module functor. Let be $(X, r)\in
\ca^T$, $(M,s)\in \Mo^U$. Then
$$\alpha^*((X,r)\otb (M,s))= (X\otb M, (r\otb s) c_{X,M}\alpha_{X\otb M}),$$
and
$$(X,r)\otb \alpha^*(M,s)=(X\otb M, (r\otb s\alpha_M) \widetilde{c}_{X,M}).$$
Since $\alpha: U \to \widetilde U$ is a morphism of lax module functors, it
follows from
equation \eqref{modfunctor3} that $\alpha^*((X,r)\otb (M,s))=(X,r)\otb
\alpha^*(M,s)$.
\medbreak

 3.  Assume $(\Mo, \widetilde{U}, \widetilde{c})$ is another
$T$-equivariant structure and $\alpha:U\to \widetilde{U}$ is a surjective
morphism of
$T$-equivariant structures.
% Notice that the functor $\alpha^*:\Mo^{\widetilde{U}}\to \Mo^U$ is exact.
%In addition, $\alpha^*(M,s)\neq 0$ for any $(M,s)\neq 0$. Hence $\alpha^*$
% is a faithful functor.
Notice that $\alpha^*(f) = f$ for all morphism $f$ in $\Mo^{\widetilde{U}}$.
Hence $\alpha^*$ is a faithful functor.

The functor $\alpha^*$ is also full. Indeed, let be $(M,s),
(N,r)\in\Mo^{\widetilde{U}}$
and $f: (M,s\alpha_M) \to (N,r\alpha_N)$ be a morphism in $\Mo^U $. Then
$f s\alpha_M= r\alpha_N U(f)$, which implies, by
the naturality of $\alpha$, that $f s\alpha_M= r \widetilde{U}(f)\alpha_M$.
Since
$\alpha_M$ is surjective, then $f s= r \widetilde{U}(f)$ and $f$
is a morphism in $\Mo^{\widetilde{U}}$. \medbreak

Since $\Mo^U$
 is a simple  $\ca^T$-module category then the functor $\alpha^*$
is an equivalence of module categories. Hence there exists a module functor
$\Fc: \Mo^U \to \Mo^{\widetilde{U}}$ such that $\Fc\circ \alpha^*\simeq \Id$
and $\alpha^*\circ \Fc\simeq \Id$. In particular, there exists a natural
isomorphism
$\gamma: \alpha^*\circ \Fc \to \Id$. Since $\gamma_{(M,s)}$ is a morphism in the
category $ \Mo^U $, for all
$(M,s)\in  \Mo^U $, the diagram
\begin{equation}\label{diagram11}
\xymatrix{
U(\Fc(M)) \ar[d]^{s^\Fc\alpha_{\Fc(M)}}\ar[rr]^{U(\gamma)}&& U(M) \ar[d]_{s} \\
\Fc(M)\ar[rr]^{\gamma}&& M}
\end{equation}
is commutative. Here $\Fc(M,s)=(\Fc(M), s^\Fc)$. Let us define a new functor
$\widehat{\Fc}:\Mo^U \to \Mo^{\widetilde{U}}$ as follows. For any $(M,s)\in
\Mo^U$
$$\widehat{\Fc}(M,s)=(M,\gamma_M\circ s^\Fc \circ\widetilde{U}(\gamma^{-1}_M)).
$$
For any morphism $f:(M,s)\to (N,r)$ in $\Mo^U$, $\widehat{\Fc}(f)=f$.

The object $\widehat{\Fc}(M,s)$ is  in $\Mo^{\widetilde{U}}$. Indeed,
denote $t=\gamma_M\circ s^\Fc \circ\widetilde{U}(\gamma^{-1}_M)$, then
\begin{align*} t\widetilde{U}(t)&= \gamma_Ms^\Fc\widetilde{U}(\gamma^{-1}_M)
\widetilde{U}( \gamma_Ms^\Fc \widetilde{U}(\gamma^{-1}_M) )=
\gamma_Ms^\Fc \widetilde{U}(s^\Fc)\widetilde{U}^2(\gamma^{-1}_M)
\\
&=\gamma_M s^\Fc \widetilde{\nu}_{\Fc(M)}\widetilde{U}^2(\gamma^{-1}_M)
=\gamma_Ms^\Fc   \widetilde{U}(\gamma^{-1}_M) \widetilde{\nu}_M=t 
\widetilde{\nu}_M.
\end{align*}
The third equality follows since $(\Fc(M), s^\Fc)\in \Mo^{\widetilde{U}}$ and
the fourth equality follows from the naturality of $\widetilde{\nu}$. Now,
using \cite[Lemma 1.6]{BV}, it follows that there exists a monad morphism
$\beta: U\to \widetilde{U}$ such that $\widehat{\Fc}=\beta^*$.
Whence, for any $(M,s) \in \Mo^U$ we have $\gamma_Ms^\Fc= s \beta_M
\widetilde{U}(\gamma_M)$. Hence
\begin{align*}\gamma_Ms^\Fc \alpha_{\Fc(M)}&= s \beta_M
\widetilde{U}(\gamma_M)\alpha_{\Fc(M)}\\
&=s \beta_M  \alpha_M U(\gamma_M).
\end{align*}
Using commutativity of diagram \eqref{diagram11}, we obtain that $s \beta_M 
\alpha_M= s$. Since this argument
can be applied to $(U(M), \nu_M)$ for any $M\in \Mo$, we get that
$\nu_M\beta_{U(M)}
  \alpha_{U(M)}=\nu_M$.
Thus
\begin{align*}\id_M=\nu_M U(u_M)=\nu_M\beta_{U(M)}  \alpha_{U(M)} U(u_M)
=\beta_M  \alpha_M.
\end{align*}
An analogous argument shows that $\alpha_M \beta_M=\id_M$, and therefore
$\alpha$ is an isomorphism. Thus we conclude that the equivariant structure
$(\Mo, U, c)$ is simple.
\epf

Let $\Mo, \widetilde{\Mo}$ be $\ca$-module categories and assume that
$(\Mo, U, c)$, $(\widetilde{\Mo}, \widetilde{U}, \widetilde{c})$ are
$T$-equivariant
structures. Let $(G, d):\Mo\to \widetilde{\Mo}$ be a $\ca$-module functor.
We shall denote by $\widehat{T}(G): \Mo(T)\to \widetilde{\Mo}(T)$
the following $\ca$-module functor. For any
$M\in   \Mo(T)$, $\widehat{T}(G)(M)=G(M)$, and the module structure is given by
$$d_{T(X), M}: G(T(X) \otb M) \to T(X) \otb G(M).$$ Assume there is a natural
transformation
$\theta: \widetilde{U}\circ G\to \widehat{T}(G)\circ U$. Define the functor
$\widehat{G}: \Mo^U\to
\widetilde{\Mo}^{\widetilde{U}}$ by $\widehat{G}(M,s)=(G(M), G(s)\theta_M)$
for all $(M,s)\in  \Mo^U$.
\begin{prop} Using the above notation, the following assertions hold.
\begin{itemize}
 \item[1.] Suppose there exists a $\ca$-module natural transformation
$ \theta: \widetilde{U}\circ G\to \widehat{T}(G)\circ U$ such that
\begin{equation}\label{2-nat-f-eq}
\theta_M \widetilde{\nu}_{G(M)}=G(\nu_M)\theta_{U(M)}\widetilde{U}(
\theta_M),\quad
 \theta_M \widetilde{u}_{ G(M)}=G(u_M),
\end{equation}
for all $M\in \Mo$. Then the functor $\widehat{G}$ is a
$\ca^T$-module functor.
\item[2.]  Assume that $(H,h):\Mo\to \widetilde{\Mo}$ is another $\ca$-module
functor
equipped with a $\ca$-module natural transformation
$\chi: \widetilde{U}\circ H\to \widehat{T}(H)\circ U$ satisfying
\eqref{2-nat-f-eq}
(replacing $G$ by $H$)
and let
$\alpha:(G, d)\to (H,h)$ be a $\ca$-module natural transformation such that
\begin{equation}\label{2-nat-f-eq2}
 \alpha_{U(M)}\theta_M=\chi_M  \widetilde{U}(\alpha_M),
\end{equation}
for all $M \in \Mo$. Then, the natural transformation
$\widehat{ \alpha}:\widehat{G}\to \widehat{H}$, $\widehat{
\alpha}_{(M,s)}=\alpha_M$, $(M,s) \in  \Mo^U$,  is a $\ca^T$-module natural
transformation.
\end{itemize}
\end{prop}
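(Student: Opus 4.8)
The plan is to verify, in turn, each of the two claimed statements essentially by unwinding the definitions, using that $(G,d)$ and $(H,h)$ are already $\ca$-module functors together with the hypotheses \eqref{2-nat-f-eq} and \eqref{2-nat-f-eq2}, and reducing everything to the module-category-over-$\ca^T$ structure on $\Mo^U$ and $\widetilde\Mo^{\widetilde U}$ described in Theorem \ref{U-equivariant modcat}(1). Recall from that theorem that the $\ca^T$-action on $\Mo^U$ is $(X,r)\otb(M,s)=(X\otb M,(r\otb s)c_{X,M})$, and similarly on $\widetilde\Mo^{\widetilde U}$ using $\widetilde c$.

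\emph{Part 1.} First I would check that $\widehat G$ is well defined, i.e. that $\widehat G(M,s)=(G(M),G(s)\theta_M)$ is really a $\widetilde U$-equivariant object of $\widetilde\Mo$. Writing $t=G(s)\theta_M$, one computes $t\,\widetilde U(t)=G(s)\theta_M\widetilde U(G(s))\widetilde U(\theta_M)$; by naturality of $\theta$ this equals $G(s)G(U(s))\theta_{U(M)}\widetilde U(\theta_M)$, which by the first relation in \eqref{2-nat-f-eq} and the associativity axiom $s\circ U(s)=s\circ\nu_M$ for the $U$-module $(M,s)$ becomes $G(s)G(\nu_M)\theta_{U(M)}\widetilde U(\theta_M)=G(s)\theta_M\widetilde\nu_{G(M)}=t\,\widetilde\nu_{G(M)}$; the unit axiom for $t$ follows the same way from the second relation in \eqref{2-nat-f-eq} and $s\circ u_M=\id_M$. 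Next, the $\ca^T$-module functor structure on $\widehat G$ is declared to be the one inherited from the $\ca$-module functor structure $d$ of $G$: for $(X,r)\in\ca^T$ and $(M,s)\in\Mo^U$ one must check that the isomorphism $d_{X,M}:G(X\otb M)\to X\otb G(M)$ is a morphism in $\widetilde\Mo^{\widetilde U}$ from $\widehat G((X,r)\otb(M,s))$ to $(X,r)\otb\widehat G(M,s)$, i.e. that it intertwines the two $\widetilde U$-equivariant structures $G\big((r\otb s)c_{X,M}\big)\theta_{X\otb M}$ and $(r\otb s\alpha_M)\widetilde c_{X,M}$ — wait, here one uses $\theta$ rather than $\alpha$, so the target structure is $\big(r\otb(G(s)\theta_M)\big)\widetilde c_{X,M}$. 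The verification is the diagram chase combining: naturality of $d$ (applied to the morphism $(r\otb s)c_{X,M}$), the fact that $\theta$ is a $\ca$-module natural transformation (which relates $\theta_{X\otb M}$, after transporting along $d$, to $r\otb\theta_M$), and the compatibility of $d$ with the action, i.e. that $(G,d)$ is a module functor so that $d$ is coherent with $c$ and $\widetilde c$. The coherence conditions \eqref{modfunctor1}, \eqref{modfunctor2} for $\widehat G$ as a $\ca^T$-module functor then follow automatically from those for $(G,d)$ as a $\ca$-module functor, since the $\ca^T$-action is built from the $\ca$-action.

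\emph{Part 2.} For the second statement, I would first recall from Theorem \ref{U-equivariant modcat}(2) (or directly) that since $\alpha:(G,d)\to(H,h)$ is in particular a natural transformation, $\widehat\alpha_{(M,s)}:=\alpha_M$ is a well-defined morphism $G(M)\to H(M)$ in $\widetilde\Mo$; one must check it is a morphism in $\widetilde\Mo^{\widetilde U}$, i.e. $\alpha_M\circ\big(G(s)\theta_M\big)=\big(H(s)\chi_M\big)\circ\widetilde U(\alpha_M)$. By naturality of $\alpha$ the left side is $H(s)\,\alpha_{U(M)}\theta_M$, and by \eqref{2-nat-f-eq2} this is $H(s)\,\chi_M\,\widetilde U(\alpha_M)$, which is the right side. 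Finally, that $\widehat\alpha$ is a $\ca^T$-module natural transformation is condition \eqref{modfunctor3} for the datum $(\widehat G,\text{via }d)$, $(\widehat H,\text{via }h)$: one needs $h_{X,M}\,\widehat\alpha_{X\otb M}=(\id_X\otb\widehat\alpha_M)\,d_{X,M}$, i.e. $h_{X,M}\,\alpha_{X\otb M}=(\id_X\otb\alpha_M)\,d_{X,M}$, which is exactly the statement that $\alpha:(G,d)\to(H,h)$ is a $\ca$-module natural transformation (condition \eqref{modfunctor3} for the $\ca$-module functors $G,H$). So this part requires no new computation beyond invoking \eqref{2-nat-f-eq2} for well-definedness.

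\emph{Main obstacle.} The only genuinely laborious point is the module-functor compatibility in Part 1 — checking that $d_{X,M}$ intertwines the two $\widetilde U$-equivariant structures on $G(X\otb M)$ and $X\otb G(M)$. This is a three-way interaction of: (i) naturality of $d$, (ii) the axiom expressing that $\theta$ is $\ca$-module-natural (which, roughly, says $\theta$ commutes with the action up to the structure maps $c,\widetilde c,d$ in the appropriate sense via the functor $\widehat T(G)$ whose module structure is $d_{T(X),M}$), and (iii) the relation \eqref{2-nat-f-eq}. One must be careful that the module structure on $\widehat T(G)$ uses $d_{T(X),M}$ (not $d_{X,M}$), so the statement "$\theta$ is a $\ca$-module natural transformation $\widetilde U\circ G\to\widehat T(G)\circ U$" already incorporates a $T$ inside; tracking that $T$ correctly through the coherences \eqref{hopfmonad1}–\eqref{hopfmonad11} of the Hopf monad is where sign/placement errors would creep in. Everything else is a direct, if slightly tedious, substitution of definitions.
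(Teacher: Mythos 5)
Your argument is correct and follows the same route as the paper's (much terser) proof: \eqref{2-nat-f-eq} gives well-definedness of $\widehat G$ on objects, the module structure on $\widehat G$ is $d$ itself and the fact that $d_{X,M}$ intertwines the two $\widetilde U$-equivariant structures follows from naturality of $d$ combined with the $\ca$-module-naturality of $\theta$, and Part 2 reduces to naturality of $\alpha$ plus \eqref{2-nat-f-eq2}. One minor imprecision: in your ``main obstacle'' paragraph, item (iii) is not actually needed for the intertwining check --- \eqref{2-nat-f-eq} enters only in the well-definedness step, exactly as you carried it out.
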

\pf 1. That the functor  $\widehat{G}$ is well-defined, that is,
$\widehat{G}(M,s)\in \widetilde{\Mo}^{\widetilde{U}}$ for any
 $(M,s)\in  \Mo^U$ is a consequence of \eqref{2-nat-f-eq}.
 The module structure of the functor $\widehat{G}$ is $d$, the same module
structure
of the functor $G$. This map is a morphism in the category
$\widetilde{\Mo}^{\widetilde{U}}$
since $\theta$ is a $\ca$-module natural transformation.

2. For any $(M,s) \in  \Mo^U$ the map $\alpha_M$ is a morphism in the category
$ \widetilde{\Mo}^{\widetilde{U}}$ since it satisfies \eqref{2-nat-f-eq2}.
\epf

\subsection{Module categories over $\caT$}

Let us assume that $(T, \mu, \eta)$ is a Hopf monad over $\ca$. Then $\tb$ is a
Hopf
monad over $\ca^{\op}$. Let
$\No$ be a left $\catop$-module.
It follows from Lemma \ref{lema-tec1} (1) that $\No$ is a left $\ca$-module.
The left action is given by
$$\odot:\ca\times \No\to \No, \quad X\odot N= R_X\otb N,$$
for all $X\in \ca$, $N\in \No$, and the associativity
$$m_{X,Y,N}: (X\ot Y)\odot N\to X\odot (Y\odot N), m_{X,Y,N}=m_{R_X,R_Y,N},$$
 for all $X, Y\in \ca$, $N\in \No$. If $f:X\to Y$ is a morphism in $\ca$ and
$g:N\to M$
is a morphism in $\No$ then $f\odot g= \alpha_f\otb g$, where $\alpha_f:R_X\to
R_Y$ is the
natural transformation $(\alpha_f)_Z:X\ot Z\to Y\ot Z, (\alpha_f)_Z=f\ot \id_Z,$
for all
$Z\in \ca$.

Let $T$ be a  right exact faithful  Hopf monad on $\ca$.
By Lemma \ref{lema-tec1} (2), $(\tb, b)\in \catop$.
 Let $U:\No\to \No$ be
the functor defined by $U(N)=  \tb  \otb N$, for all $N\in \No$,  and let 
$\nu:U^2\to U$ and $u:\Id_{\No}\to U$ be the
natural transformations
$$\nu_N=(\mu\otb\id_N)m^{-1}_{T,T,N}, \quad u_N=\eta\otb \id_N,$$
for all $N\in \No$.
\begin{lema} $(U,\nu,u)$ is a monad  on $\No$.
\end{lema}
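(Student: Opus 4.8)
The plan is to verify directly that the triple $(U,\nu,u)$ satisfies the monad axioms \eqref{monads}, namely $\nu_N U(\nu_N) = \nu_N \nu_{U(N)}$ and $\nu_N u_{U(N)} = \id_{U(N)} = \nu_N U(u_N)$, using the fact established in Lemma \ref{lema-tec1} (2) that $(\tb, b)$ is an algebra in $\catop$ with multiplication $\mu:T^2\to T$ and unit $\eta:\id\to T$, together with the module category axioms \eqref{left-modulecat1}, \eqref{left-modulecat2} for $\No$ as a $\catop$-module. The key conceptual point is that the action of an algebra object in a tensor category on a module category always produces a monad, so really this lemma is an instance of a general principle; however, since the paper works with the explicit formulas $U(N) = \tb \otb N$, $\nu_N = (\mu\otb\id_N)m^{-1}_{T,T,N}$ and $u_N = \eta\otb\id_N$, the proof is naturally carried out by unwinding these.

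First I would record how $U$ behaves under the $\catop$-module structure: $U^2(N) = \tb\otb(\tb\otb N)$, and the associativity isomorphism $m_{T,T,N}$ identifies $(\tb\otimes^{\op}\tb)\otb N$ with $U^2(N)$ (recall $\otimes^{\op}$ reverses the tensor product, so $\tb\otimes^{\op}\tb = T\otimes T$ as an object but with the algebra structure coming from $\mu$ in the appropriate order). I would then note that $U(\nu_N)$, being $\id_T\otb$ applied to $\nu_N$ post-composed appropriately, is related via naturality of $m$ to the map induced by $\id_T\otimes^{\op}\mu$, while $\nu_{U(N)}$ is related to the map induced by $\mu\otimes^{\op}\id_T$. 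The associativity axiom \eqref{left-modulecat1} for $\No$ then converts the computation of both $\nu_N U(\nu_N)$ and $\nu_N\nu_{U(N)}$ into a single expression involving the triple action of $T\otimes^{\op}T\otimes^{\op}T$ on $N$, post-composed with $\mu(\id\otimes\mu)$ versus $\mu(\mu\otimes\id)$ respectively; these two agree precisely because $\mu$ is associative, i.e. the first identity in \eqref{monads} for the monad $(T,\mu,\eta)$ (equivalently, because $(\tb,b)$ is an algebra object, its multiplication is associative as a morphism in $\catop$). The unit axioms are handled the same way, using \eqref{left-modulecat2} and the unit identities $\mu_X\eta_{T(X)} = \id = \mu_X T(\eta_X)$ from \eqref{monads}, together with the compatibility that $\eta$ and $\mu$ are morphisms in $\catop$ so that $u_N$ and $\nu_N$ are built from genuine $\catop$-module natural transformations.

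The main obstacle I anticipate is bookkeeping rather than anything deep: one must keep careful track of the $\op$-conventions (the order reversal in $\ca^{\op}$ means $m^{\op}_{X,Y,Z} = m^{-1}_{Z,Y,X}$-type swaps occur, which is exactly why the inverse $m^{-1}_{T,T,N}$ appears in the definition of $\nu$), and one must correctly match the natural transformation $\alpha_\mu: R_{?}\to R_{?}$ of module endofunctors (in the notation introduced just before the lemma, $f\odot g = \alpha_f\otb g$) with the morphism $\mu:T^2\to T$ viewed inside $\catop = \End_{\ca^T}(\ca)^{\op}$ or as applied to objects. Once the translation dictionary between "the monad $(T,\mu,\eta)$ on $\ca$", "the algebra $(\tb,b)$ in $\catop$", and "the endofunctor $U = \tb\otb(-)$ on $\No$ with its multiplication and unit" is set up, each monad axiom for $(U,\nu,u)$ reduces mechanically — via one application of a module-category coherence axiom and one application of the corresponding monad axiom for $T$ — to an identity that holds by hypothesis. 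I would therefore present the proof as: (i) rewrite $\nu$ and $u$ in terms of the $\catop$-action of the algebra morphisms $\mu$ and $\eta$; (ii) apply \eqref{left-modulecat1} to reduce both sides of the associativity axiom to the associativity of $\mu$; (iii) apply \eqref{left-modulecat2} to reduce the unit axioms to the unit identities for $(T,\mu,\eta)$; and conclude.
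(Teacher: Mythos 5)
Your proposal is correct, but it takes the long way around a point that the paper dispatches in one line. You explicitly name the key principle --- that the action of an algebra object in a tensor category on a module category yields a monad --- and then set it aside in favour of a direct verification of the axioms \eqref{monads} via \eqref{left-modulecat1}, \eqref{left-modulecat2} and the associativity/unitality of $\mu$ and $\eta$. The paper instead \emph{uses} that principle as the entire proof: the $\catop$-module structure on $\No$ gives a monoidal functor $L:\catop\to\End(\No)$, $L(X)(N)=X\otb N$, with monoidal constraint $(c_{X,Y})_N=m^{-1}_{X,Y,N}$; monoidal functors carry algebras to algebras, algebras in $\End(\No)$ are monads, and $(\tb,b)$ is an algebra in $\catop$ by Lemma \ref{lema-tec1}~(2) --- done. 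Your computation is exactly what one gets by unwinding that abstract argument, so nothing is at stake mathematically; the abstract route buys brevity and makes clear that no property of $T$ beyond ``algebra in $\catop$'' is used, while your explicit route makes the formulas for $\nu$ and $u$ (in particular the role of $m^{-1}_{T,T,N}$) transparent. One small quibble: the appearance of $m^{-1}_{T,T,N}$ in $\nu_N$ is not really an artifact of the $\op$-conventions as your parenthetical suggests; it is simply the constraint needed to pass from $U^2(N)=\tb\otb(\tb\otb N)$ to $(\tb\otimes\tb)\otb N$ before applying $\mu\otb\id_N$, i.e.\ it is the monoidal structure of $L$ itself. The $\op$-bookkeeping only affects which composition order realizes $\tb\otimes\tb$ in $\catop$, and since both orders give $T^2$ this is harmless here.
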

\pf  Since $\No$ is a left $\catop$-module, then there is a monoidal functor $L:
\catop\to \End(\No)$, defined in the form $L(X) = X \otb N$, $N \in \No$, where
the
monoidal structure $c_{X, Y}: L(X \otimes^{\op} Y) \to L(X) \otimes L(Y)$ is the
natural transformation given by
$$(c_{X, Y})_N = m_{X, Y, N}^{-1}, $$ for all $X, Y \in \catop$, $N \in \No$. In
particular, $L$ takes algebras in $\catop$ to algebras in $\End(\No)$, that is,
to monads on $\No$. This implies the lemma, in view of Lemma \ref{lema-tec1}
(2).
\epf

\begin{teo}\label{equi-mc}   Let $T$ be a right exact faithful Hopf monad on
$\ca$.  Then the following hold:
\begin{enumerate}
 \item[1.] With the above module structure, $\No$ is a $T$-equivariant
 left $\ca$-module.

\item[2.]  There is an equivalence of $\ca^T$-module categories
$$\No ^U\simeq \Fun_{\catop}(\ca^{\op}, \No).$$
\end{enumerate}

\end{teo}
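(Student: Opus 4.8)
The plan is to prove the two assertions in sequence, relying throughout on Lemma~\ref{lema-tec1} and on the fact that a left $\catop$-module structure on $\No$ is precisely a monoidal functor $L:\catop\to\End(\No)$. For part~1, I would verify the three conditions of Definition~\ref{defi-temc}. The $\ca$-module structure on $\No$ via $R:\ca^{\op}\to\caT$ is already described; that $(U,c):\No\to\No(T)$ is a lax $\ca$-module functor should follow from the fact that $(\tb,b)$ is an object of $\catop$ (Lemma~\ref{lema-tec1}(2)), since being a $\caT$-module endofunctor of $\ca$ is exactly the data of a lax module functor, and transporting along $R$ converts the $b$-structure into the required lax $\ca$-module-functor structure $c$ on $U=\tb\otb-$. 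The monad axioms for $(U,\nu,u)$ are the preceding lemma. The one thing that genuinely needs checking is that $\nu:U^2\to U$ and $u:\Id\to U$ are morphisms of $\ca$-module functors, i.e.\ satisfy \eqref{modfunctor3}; but $\mu:(\tb,b)\otimes(\tb,b)\to(\tb,b)$ and $\eta:(\id_\ca,\id)\to(\tb,b)$ are morphisms of $\ca^T$-module functors by Lemma~\ref{lema-tec1}(2), and applying the monoidal functor $L$ carries these into $\ca$-module-functor morphisms $\nu$, $u$ — one should just confirm that the $d$-structure on $U^2$ from Lemma~\ref{structure of square} matches the one coming from $L$ of the tensor square $(\tb,b)\otimes(\tb,b)$ in $\catop$, which is a direct comparison of \eqref{d:map} with the formula $(c_{X,Y})_N=m^{-1}_{X,Y,N}$ unwound through $\xi$ and the multiplication $\mu$.

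For part~2, I would construct functors in both directions and exhibit quasi-inverse natural isomorphisms. Given $(N,s)\in\No^U$, i.e.\ an action $s:\tb\otb N\to N$ of the algebra $(\tb,b)$ on $N$, one gets for each $(X,r)\in\ca^{\op}$ — equivalently each $T$-module — a morphism $\No$-morphism built from $s$ and $r$, and I claim this assembles into a $\catop$-module functor $\ca^{\op}\to\No$; conversely, a module functor $(P,p):\ca^{\op}\to\No$ evaluated at the algebra object $(\tb,b)$ gives $P(\tb)$ together with the action morphism $P$ of the multiplication, landing in $\No^U$ after identifying $P(\tb)$ with $\tb\otb P(\uno_{\ca^{\op}})$ via the module-functor structure. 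This is the module-category analogue of the standard equivalence $\mathrm{Alg}_A(\ca)\simeq\Fun_{\ca}(\ca,\ca)$ relating $A$-modules to module functors out of the regular module, and the reference for the shape of this correspondence is the circle of ideas around Lemma~\ref{lema-tec1}(3) (there $\ca^T\simeq\End_{\caT}(\ca)$, here the "relative" version with $\No$ in place of $\ca$).

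The step I expect to be the main obstacle is checking that the $\ca^T$-module structures on both sides agree under the equivalence. On $\No^U$ the $\ca^T$-action is $(X,r)\otb(N,s)=(X\otb N,(r\otb s)c_{X,N})$ from Theorem~\ref{U-equivariant modcat}(1); on $\Fun_{\catop}(\ca^{\op},\No)$ one must define the $\ca^T$-action — presumably by precomposition/convolution with the canonical $\ca^T$-bimodule-like structure on $\ca^{\op}$ coming from $\Fc:\ca^T\to\ca$ — and then match the two. Concretely this reduces to a commuting-diagram chase involving $\xi$, $\mu$, the half-braiding implicit in $b$, and the associativity constraint $m$; the formulas are exactly the ones that appeared in the proof of Theorem~\ref{U-equivariant modcat}(1), so the computation is parallel to the one already carried out there. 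I would therefore phrase part~2 as: define the two functors, observe faithfulness and fullness follow as in the proof of Theorem~\ref{U-equivariant modcat}(3), check essential surjectivity by the reconstruction $P\mapsto(P(\tb),P(\mu))$, and finally verify compatibility of the $\ca^T$-module structures by the same manipulation of \eqref{hopfmonad3}, \eqref{module:monads} and naturality of $\xi$ used above.
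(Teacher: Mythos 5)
Your part 2 is essentially the paper's own argument: the paper defines $\Phi(N,s)(X)=R_X\otb N$ and $\Psi(G,d^G)=(G(\uno), G(\phi)\circ (d^G_{T,\uno})^{-1})$, puts the $\ca^T$-action on $\Fun_{\catop}(\ca^{\op},\No)$ via composition with $\End_{\catop}(\ca^{\op})\simeq(\ca^{\op})^{\tb}\simeq(\ca^T)^{\op}$, and checks these are quasi-inverse $\ca^T$-module functors. Your sketch matches this, including the identification of where the compatibility check lives.

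In part 1, however, there is a genuine gap: you attribute the lax $\ca$-module functor structure $c$ on $U=\tb\otb -$ to ``transporting the $b$-structure along $R$,'' but $b$ is the wrong piece of data. The morphisms $b_{X,Y}=(s\otimes\id)\xi_{X,Y}$ are defined only for $(X,s)\in\ca^T$ and encode how $T$ commutes with the \emph{left} $\ca^T$-action on $\ca$, i.e.\ they make $(T,b)$ an object (indeed an algebra) of $\catop$. What the $T$-equivariant structure on $\No$ requires is entirely different: for \emph{arbitrary} $X\in\ca$ one needs $c_{X,N}:U(X\odot N)\to T(X)\odot U(N)$, which the paper builds from the natural transformations $\xi^X:T\circ R_X\to R_{T(X)}\circ T$ coming from the comonoidal structure of $T$ (no $T$-module structure on $X$ is available or used). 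One must then verify that these satisfy the module-functor coherence \eqref{modfunctor1} and that $\nu=\mu\otb\id$ is compatible with $c$ in the sense of \eqref{modfunctor3}; these verifications reduce, after lengthy manipulations with the associativity constraints $m$, to the comonoidality axioms \eqref{hopfmonad1} and \eqref{hopfmonad3} respectively, and they constitute essentially the whole of the paper's proof of part 1. The fact that $\mu$ and $\eta$ are morphisms of $\ca^T$-module functors (Lemma \ref{lema-tec1}(2)), which is what applying your monoidal functor $L$ would exploit, gives compatibility with the $b$-structures but says nothing about compatibility with the $\xi^X$'s; so executed literally, your plan stalls exactly where the real work begins. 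The structural idea can be salvaged --- one would need to show that each $\xi^X$ is a morphism in $\catop$ and that the family $\{\xi^X\}_X$ is suitably coherent, and then push everything through $L$ --- but that is precisely the computation you are hoping to avoid, relocated rather than eliminated.
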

\pf 1. We shall prove that the functor $U:\No\to \No$ gives a
$T$-equivariant structure on $\No$. 

Let us first show that $U:\No\to \No(T)$ is a lax module
functor.
For any $X\in \ca$, let $\xi^X: T\circ R_X\to R_{T(X)}\circ T$ be the natural
transformation given by $(\xi^X)_Y=\xi_{X,Y}$, $Y\in \ca$.
For any $X\in \ca, N\in\No$, define
$c_{X,N}:U(X\odot N)\to T(X)  \odot U(N)$ by
\begin{equation}\label{mod-2}
 c_{X,N}= m_{R_{T(X)},T,N} (\xi^X\otb \id_N) m^{-1}_{T,R_X,N}.
\end{equation}
Let us prove that for all $X, Y\in \ca$, $N\in \No$,
\begin{equation}\label{modf-tq}(\id_X\odot c_{Y,N}) c_{X,Y\odot N}
U(m_{R_X,R_Y,N})=m_{R_X,R_Y,T(N)}
(\xi_{X,Y}\odot \id_T(N))  c_{X\ot Y,N}.
\end{equation}

The right hand side of \eqref{modf-tq} equals
\begin{multline}\label{kk1}= m_{R_{T(X)},R_{T(Y)},T\otb N}
(\xi_{X,Y}\odot \id_{T\otb N}) m_{R_{T(X\ot Y)},T,N} (\xi^{X\ot Y}\otb \id_N)
m^{-1}_{T,R_{X\ot Y},N}\\
=m_{R_{T(X)},R_{T(Y)},T\otb N}m_{R_{T(X)\ot T(Y)},T,N}(\xi_{X,Y}\odot \id_N)
 (\xi^{X\ot Y}\otb \id_N) m^{-1}_{T,R_{X\ot Y},N}.
\end{multline}
The left hand side of \eqref{modf-tq} equals
\begin{multline*}= (\id_X\odot  m_{R_{T(Y)},T,N} (\xi^Y\otb \id_N)
m^{-1}_{T,R_Y,N})
m_{R_{T(X)},T,Y\odot N} (\xi^X\otb \id_{Y\odot N})\\
\times  m^{-1}_{T,R_X,{Y\odot N}} (\id_T\otb m_{R_X,R_Y,N})\\
=  (\id_X\odot  m_{R_{T(Y)},T,N} (\xi^Y\otb \id_N) m^{-1}_{T,R_Y,N})
m_{R_{T(X)},T,Y\odot N} (\xi^X\otb \id_{Y\odot N})\\
\times  m^{-1}_{T,R_X,{Y\odot N}}  m_{T,R_X,R_Y\otb N} m_{T\circ
R_X,R_Y,N}m^{-1}_{T,R_{X\ot Y},N} \\
= (\id_X\odot  m_{R_{T(Y)},T,N} (\xi^Y\otb \id_N) m^{-1}_{T,R_Y,N})
m_{R_{T(X)},T,Y\odot N} (\xi^X\otb \id_{Y\odot N})\\
\times m_{T\circ R_X,R_Y,N}m^{-1}_{T,R_{X\ot Y},N} \\
=(\id_X\odot  m_{R_{T(Y)},T,N} (\xi^Y\otb \id_N) m^{-1}_{T,R_Y,N})
m_{R_{T(X)},T,Y\odot N} m_{R_{T(X)}\circ T,R_Y, N}\\ \times
((\xi^X\otb \id_Y)\ot \id_N)m^{-1}_{T,R_{X\ot Y},N}\\
=(\id_X\odot  m_{R_{T(Y)},T,N} (\xi^Y\otb \id_N) )m^{}_{R_{T(X)},TR_Y,N} 
((\xi^X\ot \id_Y)\otb \id_N) \\
\times m^{-1}_{T,R_{X\ot Y},N}  \\
= (\id_X\odot  m_{R_{T(Y)},T,N} ) m^{}_{R_{T(X)},R_{T(Y)}T,N}((\id_X\ot
\xi^Y)(\xi^X\ot \id_Y)\otb \id_N) \\
\times
m^{-1}_{T,R_{X\ot Y},N} \\
= m_{R_{T(X)},R_{T(Y)},T\otb N} m_{R_{T(X)\ot T(Y)}, T, N}((\id_X\ot
\xi^Y)(\xi^X\ot \id_Y)\otb \id_N) \\
\times
m^{-1}_{T,R_{X\ot Y},N}. 
\end{multline*}
The second and seventh equalities by \eqref{left-modulecat1}. It follows from
\eqref{hopfmonad1} that this last expression equals \eqref{kk1}.
This proves that  $U:\No\to \No(T)$ is a lax module
functor.  Let us prove now that $(\No, U, c)$ is a $T$-equivariant $\ca$-module
category. We shall show that $\nu:U^2\to U$ is a $\ca$-module transformation. We
must
prove that
\begin{equation}\label{c-nat-trans} c_{X,N} \nu_{X\odot N}= (\id_X\odot
\nu_{N})d_{X,N},
\end{equation}
for all $X\in \ca$, $N\in \No$. Here $d_{X,N}=(\mu\otb\id_{U^2(N)})c_{T(X),T\otb
N}
U(c_{X,N})$ and $c_{X,N}$ is defined in
\eqref{mod-2}. We have
\begin{multline}\label{lhs-modt}c_{X,N} \nu_{X\odot
N}=m_{R_{T(X)},T,N}(\xi^X\otb\id_N)
m^{-1}_{T,R_X,N}(\mu\otb\id_{X\odot N})m^{-1}_{T,T,X\odot N}\\
=m_{R_{T(X)},T,N}(\xi^X\otb\id_N)(\mu\circ R_X\otb\id_N) m^{-1}_{T^2,R_X,
N}m^{-1}_{T,T,X\odot N}\\
=m_{R_{T(X)},T,N}(\xi^X(\mu\circ R_X)\otb\id_N)
m^{-1}_{T,T\circ R_X,N}
(\id_T\otb m^{-1}_{T,R_X,N}).
\end{multline}
The second equality follows from the naturality of $m$ and the third
equality follows from the associativity of $m$ \eqref{left-modulecat1}. The
right hand side of
\eqref{c-nat-trans} equals
\begin{multline*}=(\id_X\odot
(\mu\otb\id_N)m^{-1}_{T,T,N})(\mu_X\odot\id_{U^2(N)})
m_{R_{T^2(X)},T,T\otb N}(\xi^{T(X)}\otb \id_{T\otb N})\\
\times m^{-1}_{T,R_{T(X)},T\otb N}
(\id_T\otb m_{R_{T(X)},T,N})(\id_T\otb (\xi^X\ot\id_N)m^{-1}_{T,R_X,N})\\
=(\mu_X\odot (\mu\otb\id_N))(\id_{R_{T^2(X)}}\otb m^{-1}_{T,T,N})
m_{R_{T^2(X)},T,T\otb N}(\xi^{T(X)}\otb \id_{T\otb N})\\
\times m^{-1}_{T,R_{T(X)},T\otb N}
(\id_T\otb m_{R_{T(X)},T,N})(\id_T\otb (\xi^X\otb\id_N)m^{-1}_{T,R_X,N})\\
=(\mu_X\odot (\mu\ot\id_N)) m_{R_{T^2(X)},T^2,N}
m^{-1}_{R_{T^2(X)}\circ T,T, N}(\xi^{T(X)}\ot \id_{T\otb N})\\
\times m_{T\circ R_{T(X)},T,N}m^{-1}_{T,R_{T(X)}\circ T,N}
(\id_T\otb (\xi^X\ot\id_N)m^{-1}_{T,R_X,N})\\
=(\mu_X\odot (\mu\otb\id_N)) m_{R_{T^2(X)},T^2,N}
m^{-1}_{R_{T^2(X)}\circ T,T, N}(\xi^{T(X)}\otb \id_{T\otb N})\\
\times m_{T\circ R_{T(X)},T,N} ((\id_T\otb \xi^X)\ot\id_N) m^{-1}_{T,T\circ
R_X,N}
(\id_T\otb m^{-1}_{T,R_X,N})\\
=(\mu_X\odot (\mu\ot\id_N)) m_{R_{T^2(X)},T^2,N} ((\xi^{T(X)}\ot
\id_T)\otb\id_N)
((\id_T\ot \xi^X)\otb \id_N)\\
\times m^{-1}_{T,T\circ R_X,N}
(\id_T\otb m^{-1}_{T,R_X,N})\\
=m_{R_{T(X)},T,N} ( (\alpha_{\mu_X}\ot \id_T)  R_{T^2(X)}(\mu)\ot\id_N)
((\xi^{T(X)}\ot \id_T)\ot\id_N)
\\ \times ((\id_T\ot \xi^X)\ot \id_N)
 m^{-1}_{T,T\circ R_X,N}
(\id_T\ot m^{-1}_{T,R_X,N})\\
=m_{R_{T(X)},T,N} ( (\alpha_{\mu_X}\ot \id_T)  R_{T^2(X)}(\mu) (\xi^{T(X)}\circ
T)T(\xi^X)\ot\id_N)
\\ \times
 m^{-1}_{T,T\circ R_X,N}
(\id_T\ot m^{-1}_{T,R_X,N})\\
=m_{R_{T(X)},T,N} ( (\alpha_{\mu_X}\ot \id_T)  (\id_{T^2(X)}\ot\mu)
(\xi^{T(X)}\circ T)T(\xi^X)\ot\id_N)
\\ \times
 m^{-1}_{T,T\circ R_X,N}
(\id_T\ot m^{-1}_{T,R_X,N}).\\
\end{multline*}
The second equality follows from the naturality of $m$, the third equality  from
\eqref{left-modulecat1}, the fourth, fifth and sixth equalities again by the
naturality of $m$.
It remains to show that $(\alpha_{\mu_X}\ot \id_T)  (\id_{T^2(X)}\ot\mu)
(\xi^{T(X)}\circ T)T(\xi^X) =
\xi^X(\mu\circ R_X)$, but this is \eqref{hopfmonad3}.

\medbreak

2. The category $ \Fun_{\catop}(\ca^{\op}, \No)$ is a right
$\End_{\catop}(\ca^{\op})$-module category via composition of
functors. It follows from Lemma
\ref{lema-tec1} and Lemma \ref{op-equiv}
  that $\End_{\catop}(\ca^{\op})\simeq (\ca^{\op})^{\tb}
\simeq (\ca^T)^{\op}$. Thus
$ \Fun_{\catop}(\ca^{\op}, \No)$ is a left $\ca^T$-module category using
these identifications. Define the $\ca^T$-module functors
$$\Phi: \No^U\to  \Fun_{\catop}(\ca^{\op}, \No),
\quad \Psi: \Fun_{\catop}(\ca^{\op}, \No)\to \No^U$$
as follows. If $(N,s)\in \No^U$ then $\Phi(N,s)(X)=R_X\otb N$ for all
$X\in\ca$. The functor $\Phi(N,s)$ is a module functor with structure
given by
$$ c^{(N,s)}_{F,Y}:\Phi(N,s)(F\otb Y)\to F\otb \Phi(N,s)(Y),\quad
c^{(N,s)}_{F,Y}=(c^F_{-,Y}\ot\id_N)m^{-1}_{F,R_Y,N},$$
for all $(F, c^F)\in \catop$, $Y\in \ca$.
If $(G,d^G)\in \Fun_{\catop}(\ca^{\op}, \No)$ define
$\Psi(G,d^G)=(G(\uno), s^G)$, where $s^G:U(G(\uno))\to G(\uno)$ is defined
by $s^G= G(\phi)\circ (d^G_{T,\uno})^{-1}.$ Both functors $\Phi, \Psi$ are
well-defined $\ca^T$-module functors and they give an equivalence of module
categories.\epf

\begin{cor}\label{modc-eq} Let $T$ be a  right exact faithful Hopf monad  on 
$\ca$ and let $\Mo$ be an exact indecomposable $\ca^T$-module category. Then
there exists a $T$-equivariant indecomposable exact $\ca$-module category $\No$
with
simple and exact equivariant structure given by $U:\No\to \No$ such that
$\Mo\simeq \No^U$ as $\ca^T$-module categories.
\end{cor}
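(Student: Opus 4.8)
The plan is to combine the Morita-duality result of Remark~\ref{dual-mc} with the structural description in Theorem~\ref{equi-mc}. First I would pass from the $\ca^T$-module category $\Mo$ to a right $\caT$-module category, or equivalently a left $\catop$-module category, via the Morita correspondence: since $\ca$ is an exact indecomposable $\ca^T$-module (by faithfulness of $T$, as recalled in Subsection on $\caT$), Remark~\ref{dual-mc} gives that $(\ca^T)^*_\ca \simeq \caT$ up to reversal, and the assignment $\No \mapsto \Hom_{\ca^T}(\ca,\No)$ sends exact indecomposable left $\ca^T$-module categories bijectively to exact indecomposable left $\caT$-module categories. Applying this to $\Mo$ produces an exact indecomposable left $\catop$-module category, which by Lemma~\ref{lema-tec1}(1) is in particular an exact indecomposable left $\ca$-module category; call it $\No$.

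Next I would invoke Theorem~\ref{equi-mc}(1): this $\No$ carries a canonical $T$-equivariant structure with $U(N) = \tb \otb N$, coming from the algebra structure of $(\tb,b)$ in $\catop$ (Lemma~\ref{lema-tec1}(2)). Theorem~\ref{equi-mc}(2) then identifies $\No^U \simeq \Fun_{\catop}(\ca^{\op},\No)$ as $\ca^T$-module categories. But $\Fun_{\catop}(\ca^{\op},\No) = \Hom_{\catop}(\ca^{\op}, \Hom_{\ca^T}(\ca,\Mo))$, and by the Morita correspondence of Remark~\ref{dual-mc} applied in the reverse direction — sending a left $\caT$-module category back to a left $\ca^T$-module category — this recovers $\Mo$. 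So $\Mo \simeq \No^U$ as $\ca^T$-module categories, which is the required equivalence.

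It then remains to check the three adjectives: that $\No$ is exact and indecomposable, and that $U$ is simple and exact. Exactness and indecomposability of $\No$ are built into the Morita correspondence (Remark~\ref{dual-mc} preserves exactness and indecomposability, and for exact module categories indecomposable $=$ simple). Exactness of the equivariant structure $U$ follows because $U$ is the functor $N \mapsto \tb \otb N = R_{\uno} \otb \cdots$ — more precisely $U$ is the image under the monoidal functor $L:\catop \to \End(\No)$ of the object $\tb \in \catop$, and every $\ku$-linear module endofunctor over an exact module category is exact (\cite[Proposition 3.11]{eo}), so $U$ is exact. For simplicity of $U$: since $\Mo \simeq \No^U$ is simple (being exact indecomposable), Theorem~\ref{U-equivariant modcat}(3) gives precisely that the $T$-equivariant structure $(\No,U,c)$ is simple.

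The main obstacle I anticipate is bookkeeping the two directions of the Morita correspondence in Remark~\ref{dual-mc} together with the two reversals (from $\ca^T$ to $\caT$ via $\Hom_{\ca^T}(\ca,-)$ uses left modules, while Lemma~\ref{lema-tec1} gives $\ca^T \simeq \End_{\caT}(\ca)$ and Lemma~\ref{op-equiv} introduces the $\op$), so that one must be careful that $\Fun_{\catop}(\ca^{\op},\No)$ is genuinely identified with $\Mo$ and not some reversed or dual variant; verifying that the $\ca^T$-module structures on both sides match under these identifications is the delicate point, though it is essentially forced once the underlying categories are matched. Everything else reduces to citing Theorem~\ref{equi-mc}, Theorem~\ref{U-equivariant modcat}(3), and \cite[Proposition 3.11]{eo}.
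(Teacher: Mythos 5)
Your proposal is correct and follows essentially the same route as the paper: pass to the Morita-dual side via Remark \ref{dual-mc} to obtain $\No=\Hom_{(\ca^T)^{\op}}(\ca^{\op},\Mo)$, apply Theorem \ref{equi-mc} to get the equivariant structure $U=\tb\otb -$ and the chain $\No^U\simeq \Fun_{\catop}(\ca^{\op},\No)\simeq\Mo$, and deduce simplicity of $(\No,U,c)$ from Theorem \ref{U-equivariant modcat}(3). One small correction: exactness of $U$ does not follow from \cite[Proposition 3.11]{eo}, since $U=\tb\otb -$ is the action of an object of $\catop$ on $\No$ and is not a $\catop$-module endofunctor; it is exact simply because the action bifunctor $\otb$ is biexact by the definition of a module category, which is the reason the paper gives.
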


%\begin{cor}\label{modc-eq} Let $T$ be a  right exact faithful Hopf monad  on 
%$\ca$.
%Then any exact indecomposable $\ca^T$-module category is equivalent
%to $\No^U$ for some $T$-equivariant
%indecomposable exact $\ca$-module category $\No$ with
%simple equivariant structure given by $U:\No\to \No$.  Moreover, $U:\No\to \No$
%can be chosen to be an exact functor.
%\end{cor}

\pf   Let $\Mo$ be an exact indecomposable left $\ca^T$-module category.
Thus, $\Mo$ is an exact indecomposable right $(\ca^T)^{\op}$-module category.
Then, the category $\No = \Hom_{(\ca^T)^{\op}}(\ca^{\op}, \Mo)$ is an exact
indecomposable left
$\catop$-module category, see \cite[Theorem 3.31]{eo}. It follows from Theorem
\ref{equi-mc} that
$\No=\Hom_{(\ca^T)^{\op}}(\ca^{\op}, \Mo)$ is a $T$-equivariant $\ca$-module
category
with equivariant structure $U:\No\to \No$, $U(N)=\tb\otb N$, and there are
$\ca^T$-module equivalences
$$\No^U\simeq \Hom_{\catop}(\ca^{\op}, \No)\simeq \Mo. $$
Since the functor $\otb$ is biexact the functor $U$ is exact. \epf

\section{Module categories over  Hopf algebroids}\label{h-algebroids}

\subsection{ Hopf algebroids}  Let us briefly introduce the notion of
Hopf algebroid.  The reader is refered to  \cite{Bo},
\cite{BS}, \cite{KS}. Let $L$, $R$ be  algebras over $\ku$.

\begin{defi} A \emph{left bialgebroid} with base $L$ is a collection
$(H,s,t,\Delta, \epsilon)$ where $s:L\to H$, $t:L^{\opp}\to H$ are
algebra maps such that $s(l)t(l)=t(l)s(l)$ making $H$ an
$(L,L)$-bimodule:
$$l\cdot x \cdot l'=s(l)t(l')x,$$ for all $l, l'\in L, x\in H$. The remaining
data $\Delta:H\to H\ot_L H$ and 
$\epsilon:H\to L$  are $k$-linear maps which make the triple
$(H,\Delta,\epsilon)$ into a
comonoid in ${}_L\Mo_L$. Moreover, the following identities are
required to hold:
\begin{equation}\label{lb1} \Delta(x)(t(l)\ot 1)=\Delta(x) (1\ot s(l)),
\end{equation}
\begin{equation}\label{lb2} \Delta(1)=1\ot 1,
\end{equation}
\begin{equation}\label{lb3}\Delta(xy)=\Delta(x)\Delta(y),
\end{equation}
\begin{equation}\label{lb4} \epsilon(1)=1,\;\; \epsilon(x
s(\epsilon(y)))=\epsilon(xy)=\epsilon(xt(\epsilon(y))),
\end{equation}
for all $x, y\in H, l\in L$. Right bialgebroids are defined in a
similar way. See for example \cite[Definition 2.2]{BS}.
\end{defi}

\begin{defi} A \emph{Hopf algebroid} is a collection $(H_R, H_L, \Ss)$
where $H_L=(H,s_L,t_L,\Delta_L, \epsilon_L)$ is a left bialgebroid
over $L$ and $H_R=(H,s_R,t_R,\Delta_R, \epsilon_R)$ is a right
bialgebroid over $R$, $\Ss:H\to H$ is a linear map, called the
\emph{antipode}, such that
\begin{align}\label{hb1} s_L\circ \epsilon_L\circ t_R=t_R,
\qquad t_L\circ \epsilon_L\circ s_R=s_R,\\
 s_R\circ \epsilon_R\circ t_L=t_L, \qquad
 t_R\circ \epsilon_R\circ s_L=s_L,
\end{align}
\begin{equation}\label{hb2} (\Delta_L\ot\id_H)\Delta_R=(\id_H\ot
\Delta_R)\Delta_L, \quad (\Delta_R\ot\id_H)\Delta_L=(\id_H\ot
\Delta_L)\Delta_R
\end{equation}
\begin{flalign}\label{hb23}
\Ss:H\to H \text{ is both an } L   \text{ and }
R \text{ bimodule map: } \qquad \qquad  \qquad   \qquad  
\end{flalign}
$$\Ss(t_L(l)ht_L(l'))=s_L(l')\Ss(h)s_L(l),\quad
\Ss(t_R(r)ht_R(r'))=s_R(r')\Ss(h)s_R(r), $$
%\begin{align}\label{hb23}
%\begin{split} &\Ss:H\to H \text{ is both an } L   \text{ and }
%R \text{ bimodule map, }\\
%&\Ss(t_L(l)ht_L(l'))=s_L(l')\Ss(h)s_L(l),\quad
%\Ss(t_R(r)ht_R(r'))=s_R(r')\Ss(h)s_R(r),
%\end{split}
%\end{align}
\begin{equation}\label{hb3} m_H\circ (\Ss\ot \id_H)\circ \Delta_L=
s_R\circ \epsilon_R,
\end{equation}
\begin{equation}\label{hb4} m_H\circ (\id_H\ot \Ss)\circ \Delta_R=
s_L\circ \epsilon_L.
\end{equation}
\end{defi}

\begin{rmk} If $(H_R, H_L, \Ss)$ is a Hopf algebroid then $R\simeq
L^{\opp}$.
\end{rmk}

If $(H_R, H_L, \Ss)$ is a Hopf algebroid the category of  finite-dimensional
left
$H$-modules is a finite tensor category.

\subsection{Hopf monads and Hopf algebroids}

Let $(H_R, H_L, \Ss)$ be a finite-dimensional Hopf algebroid.  Associated to
this Hopf algebroid, there 
is a Hopf monad $T_H$ on the category ${}_L\Mo_L$ of $L$-bimodules \cite[Section
7]{blv}. Let $L^e=L\otk L^{\opp}$.
The category ${}_L\Mo_L\simeq {}_{L^e}\Mo$ is a monoidal category with tensor
product $\ot_L$.
The functor $T_H:{}_{L^e}\Mo \to {}_{L^e}\Mo$,
$T_H(V)= H\ot_{L^e} V$, for all $V\in {}_{L^e}\Mo$ is a Hopf monad with
structure maps given by
$$ \mu_V:H\ot_{L^e} H\ot_{L^e} V\to H\ot_{L^e} V,\quad \mu_V(x\ot y\ot v)=xy\ot
v,$$
$$\eta_V: V\to H\ot_{L^e} V, \quad \eta_V(v)=1\ot v,$$
$$\xi_{V,W}: H\ot_{L^e} (V\ot_L W)\to  (H\ot_{L^e} V) \ot_L ( H\ot_{L^e} W),$$
$$ \xi_{V,W}(x\ot v\ot w)=x\_1\ot v\ot x\_2\ot w,$$
$$ \phi: H\ot_{L^e} L \to L, \quad \phi(x \ot l)=\epsilon(xs(l)),$$
for all $V, W\in {}_{L^e}\Mo$, $v\in V, w\in W$, $x,y\in H$, $l\in L$. It
follows 
 that $T_H$ is a Hopf monad. Furthermore, there is an equivalence of  tensor
categories
 $({}_L\Mo_L)^{T_H}\simeq H\rrep$.  See \cite[Corollary 5.16]{Sz}.
  Any finite tensor category is monoidally equivalent to the category of
representations of 
  a Hopf algebroid \cite[Theorem 7.6]{blv}.

\subsection{Comodule algebras over Hopf algebroids}

Let $(H,s,t,\Delta, \epsilon)$ be a Hopf algebroid. 
A \emph{left
$H$-comodule algebra} is a triple $(K,s_K,\lambda)$, where
$s_K:L\to K$ is an algebra map that makes $K$ in to a
$(L,L)$-bimodule $$l\cdot k\cdot l'= s_K(l)ks_K(l'),$$ for all
$k\in K, l,l'\in L$. A left $L$-linear map $\lambda:K\to H\ot_L
K$, such that
\begin{equation}\label{lca1} (\epsilon\ot\id_K)\lambda=\id_K,\quad
(\id_H\ot \lambda)\, \lambda= (\Delta\ot\id_K)\, \lambda,
\end{equation}
\begin{equation}\label{lca2} \lambda(K)\subseteq H\times_L K=\{x\in
H\ot_L K: \forall\; l\in L, x(t(l)\ot 1)= x(1\ot s_K(l))\}
\end{equation}
\begin{equation}\label{lca3} \lambda(1)=1\ot 1,
\;\;\lambda(x)\lambda(y)=\lambda(xy), \,
\text{ for all } x, y\in K.
\end{equation}

Equation \eqref{lca3} makes sense in view of  axiom
\eqref{lca2}. We shall use Sweedler's notation:
$\lambda(k)=k\_{-1}\ot k\_0$, for all $k\in K$.

\begin{defi} We say that a  left $H$-comodule algebra $(K,s_K,\lambda)$ is
$H$-\emph{simple} if it has
no non-trivial $H$-costable ideals.
\end{defi}

\begin{exa} \begin{enumerate}\item[(1)] $(H,s, \Delta)$ is a left $H$-comodule
algebra.
\item[(2)] $(L,s_L, \lambda_L)$ is a left $H$-comodule algebra,
    where $s_L=\id_L$ and $ \lambda_L:L\to H\ot_L L$ is
    \emph{trivial}, that is $ \lambda_L(l)= s(l)\ot 1,$ for any $l\in
    L$.\end{enumerate}
\end{exa}

\subsection{ Module categories over Hopf algebroids}

Let $(K,s_K,\lambda)$ be a left $H$-comodule algebra. If $M$ is a
left $K$-module then $M$ is a left $L$-module via $s_K$. For any
$X\in H\rrep$ the tensor product $X\ot_L M$ is a left $K$-module
with action given by
\begin{align} k\cdot (x\ot m)=k\_{-1}\cdot x\ot k\_{0}\cdot m,
\end{align}
for all $k\in K, x\in X, m\in M$. As a consequence of \eqref{lca2} this action
is
well-defined.

\begin{prop} Suppose $L$ is semisimple. Then the category ${}_K\Mo$ is a
$H\rrep$-module as follows. The action is given by
$$\otb:H\rrep\times {}_K\Mo\to {}_K\Mo,\quad X\otb M=X\ot_L M,$$
for all $X\in H\rrep,$ $M\in {}_K\Mo$. The associativity and unit
isomorphisms are canonical. \qed 
\end{prop}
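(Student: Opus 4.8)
The plan is to check the module category axioms directly for the action $\otb: H\rrep \times {}_K\Mo \to {}_K\Mo$, $X \otb M = X \ot_L M$, starting from the data already in hand. First I would verify that $X \ot_L M$ is genuinely a left $K$-module: the formula $k \cdot (x \ot m) = k\_{-1} \cdot x \ot k\_0 \cdot m$ is well-defined over $\ot_L$ precisely because of \eqref{lca2}, which says $\lambda(k)$ lies in the Takeuchi product $H \times_L K$, so that $k\_{-1} t(l) \ot k\_0 = k\_{-1} \ot s_K(l) k\_0$ matches the balancing relation $x t(l) \ot m = x \ot s_K(l) m$ coming from the $L$-bimodule structure on $X$ and the $L$-module structure on $M$ via $s_K$; associativity and unitality of this $K$-action follow from \eqref{lca3} and the first identity in \eqref{lca1}, respectively. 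Then I would note that $\otb$ is bi-exact and $\ku$-bilinear because $\ot_L$ is, using semisimplicity of $L$ to make $\ot_L$ exact on both sides.

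Next I would produce the structure isomorphisms. The associativity constraint $m_{X,Y,M}: (X \ot_L Y) \ot_L M \to X \ot_L (Y \ot_L M)$ is the canonical associativity of the relative tensor product; I must check it is a morphism of left $K$-modules, which amounts to coassociativity of $\lambda$, i.e.\ the second identity $(\id_H \ot \lambda)\lambda = (\Delta \ot \id_K)\lambda$ in \eqref{lca1}. The unit constraint $\ell_M: L \ot_L M \to M$ is the canonical one; that it is $K$-linear uses the counit axiom $(\epsilon \ot \id_K)\lambda = \id_K$ together with how $L$ acts on $M$ through $s_K$. Here one should be slightly careful about which unit object is meant: the monoidal unit of $H\rrep \simeq ({}_L\Mo_L)^{T_H}$ is $(L,\phi)$ with $\phi(x \ot l) = \epsilon(x s(l))$, so the verification of \eqref{left-modulecat2} and of $K$-linearity of $\ell$ should be carried out with this unit, again reducing to \eqref{lca1}. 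The pentagon axiom \eqref{left-modulecat1} and the triangle axiom \eqref{left-modulecat2} then hold because the canonical associativity and unit maps for $\ot_L$ already satisfy them at the level of $L$-bimodules, and the forgetful functor ${}_K\Mo \to {}_L\Mo$ is faithful.

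An alternative, cleaner route would be to realize ${}_K\Mo$ as a module category abstractly: $K$ is an algebra in the monoidal category ${}_L\Mo_L$, and the comodule algebra structure $\lambda$ together with \eqref{lca1}--\eqref{lca3} exhibits $K$ as an algebra in the category $\Fc$-relative version, i.e.\ $K \in {}_L\Mo_L$ equipped with a $T_H$-equivariant-like structure, so that ${}_K\Mo = ({}_L\Mo_L)_K$ becomes a module category over $({}_L\Mo_L)^{T_H} \simeq H\rrep$ by the general machinery of the previous sections (Theorem \ref{U-equivariant modcat}, or the discussion of comodule algebras as equivariant structures in Example \ref{exa:mc-hopf} generalized to a nontrivial base). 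However, since the statement only asserts the module category structure with explicitly given canonical constraints, the direct verification is the natural proof and the abstract interpretation can be relegated to a remark.

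The main obstacle is the well-definedness over $\ot_L$ and the $K$-linearity checks: one must keep straight three different uses of $L$ — the $(L,L)$-bimodule structure on $X \in H\rrep$, the left $L$-module structure on $M$ induced by $s_K$, and the balancing in $\ot_L$ — and see that the Takeuchi condition \eqref{lca2} is exactly what reconciles them. Everything else (pentagon, triangle) is formal once the constraints are known to be $K$-linear, since they are inherited from the well-known coherence of the relative tensor product $\ot_L$ over the semisimple base $L$.
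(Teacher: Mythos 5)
Your proposal is correct and follows the argument the paper intends: the paper itself omits the proof (noting only, just before the statement, that well-definedness of the $K$-action on $X\ot_L M$ follows from the Takeuchi condition \eqref{lca2}), and your direct verification --- Takeuchi condition for well-definedness, \eqref{lca1} and \eqref{lca3} for the constraints being $K$-linear, semisimplicity of $L$ for bi-exactness, and coherence of $\ot_L$ for the pentagon and triangle --- is exactly the routine check being left to the reader. One small transcription point: the balancing forced by \eqref{lca2} is $k\_{-1}t(l)\ot k\_0 = k\_{-1}\ot k\_0 s_K(l)$ (right multiplication by $s_K(l)$), which is what matches the relation $t(l)x\ot m = x\ot s_K(l)m$ in $X\ot_L M$.
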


Observe that the exactness assumption on the module category is only needed in Theorem \ref{mc-halgd}
below, and the assumption in that theorem is that $L$ is simple (hence
semisimple).

For any left $H$-comodule algebra $(K,s_K,\lambda)$ we shall introduce a
$T_H$-equivariant structure on the module category ${}_L\Mo$.
Define $U_K: {}_L\Mo \to {}_L\Mo$, $U_K(V)=K\ot_L V$, for all $V\in {}_L\Mo$.
For any $X\in H\rrep, V\in {}_L\Mo$ define
$$c_{X,V}:K\ot_L X\ot_L V\to (H\ot_{L^e} X) \ot_L K\ot_L V,$$
$$ c_{X,V}(k\ot x\ot v)=k\_{-1}\ot x\ot k\_0\ot v.$$
Let $\nu:U^2_K\to U_K$, $u:\Id\to U_K$, be defined as follows. For any $V\in
{}_L\Mo$
$$\nu_V: K\ot_L K\ot_L V\to K\ot_L V, \quad \nu_V(k\ot k'\ot v)=kk'\ot v,$$
$$u_V:V\to K\ot_L V, \quad u_V(v)=1\ot v.$$
It readily follows that all maps described above are well-defined.

\begin{prop}\label{propert-comdalg} The following assertions hold:
\begin{itemize}
 \item[1.] The  triple $({}_L\Mo, U_K, c)$ is a $T_H$-equivariant structure.
 \item[2.] There is an
equivalence  ${}_K\Mo \simeq ({}_L\Mo)^{U_K}$ of $H\rrep$-module categories.
 \item[3.] $({}_L\Mo, U_K, c)$ is simple if and only if $K$ is $H$-simple.
\end{itemize}

\end{prop}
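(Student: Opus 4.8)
\textbf{Proof proposal for Proposition \ref{propert-comdalg}.}
The plan is to deduce all three assertions from the general machinery of Section \ref{mc-cat} together with the description of the Hopf monad $T_H$ and its Eilenberg--Moore category given in the previous subsections. The guiding principle is that the data $(U_K,c,\nu,u)$ written down explicitly above should be recognized as an instance of Definition \ref{defi-temc}, at which point Theorem \ref{U-equivariant modcat} yields part 2 up to an identification of $U_K$-modules with $K$-modules, and Theorem \ref{U-equivariant modcat}(3) reduces part 3 to a concrete computation about $H$-costable ideals of $K$.

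For part 1 I would first check that $(U_K,c)\colon {}_L\Mo\to {}_L\Mo(T_H)$ is a lax $H\rrep$-module functor, i.e.\ that $c_{X,V}(k\otimes x\otimes v)=k_{(-1)}\otimes x\otimes k_{(0)}\otimes v$ satisfies \eqref{modfunctor1} and \eqref{modfunctor2}. Unwinding the definition of the action $\otb^{T_H}$ (which is $X\otb^{T_H}V=(H\otimes_{L^e}X)\otimes_L V$ with associativity twisted by $\xi_{X,Y}$ and unit by $\phi$), condition \eqref{modfunctor1} becomes exactly the coassociativity of $\lambda$, namely the second identity in \eqref{lca1}, $(\id_H\otimes\lambda)\lambda=(\Delta\otimes\id_K)\lambda$, and condition \eqref{modfunctor2} becomes the counit identity $(\epsilon\otimes\id_K)\lambda=\id_K$. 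Next I would verify that $(U_K,\nu,u)$ is a monad on ${}_L\Mo$, which is immediate from \eqref{lca3} since it says precisely that the multiplication and unit of $K$ are well defined and associative over $L$; and finally that $\nu$ and $u$ are morphisms of $H\rrep$-module functors (the module-functor structure on $U_K^2$ being the one from Lemma \ref{structure of square}, built from $\mu$ and $\xi$). Checking \eqref{modfunctor3} for $\nu$ unwinds, after using the explicit $\xi_{V,W}$ and $\mu_V$ of $T_H$, to the statement that $\lambda$ is multiplicative, i.e.\ $\lambda(kk')=\lambda(k)\lambda(k')$, which is again \eqref{lca3}; the statement for $u$ reduces to $\lambda(1)=1\otimes 1$. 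Thus $({}_L\Mo,U_K,c)$ is $T_H$-equivariant. These are all routine but somewhat lengthy diagram computations, of the same flavor as the proof of Lemma \ref{structure of square}.

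For part 2, by Theorem \ref{U-equivariant modcat}(1) the category $({}_L\Mo)^{U_K}$ of $U_K$-modules is an $H\rrep$-module category, and a $U_K$-module is by definition a pair $(V,\rho)$ with $\rho\colon K\otimes_L V\to V$ satisfying $\rho\circ U_K(\rho)=\rho\circ\nu_V$ and $\rho\circ u_V=\id_V$; by \eqref{lca3} and the explicit forms of $\nu,u$ this is exactly the data of an associative unital left $K$-module structure on $V$ extending the $L$-action through $s_K$. Hence $({}_L\Mo)^{U_K}\simeq {}_K\Mo$ as abelian categories, and one checks this equivalence intertwines the two $H\rrep$-actions: on the right side the action is $X\otb M=X\otimes_L M$ with the $K$-action $k\cdot(x\otimes m)=k_{(-1)}x\otimes k_{(0)}m$ from the previous proposition, and on the left side the action produced by Theorem \ref{U-equivariant modcat}(1) is $(X,r)\otb(V,\rho)=(X\otb V,(r\otb\rho)c_{X,V})$, whose $K$-action is read off from $c_{X,V}(k\otimes x\otimes v)=k_{(-1)}\otimes x\otimes k_{(0)}\otimes v$ to be precisely $k\cdot(x\otimes v)=k_{(-1)}x\otimes k_{(0)}v$. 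So the identity functor on underlying vector spaces is an equivalence of $H\rrep$-module categories.

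For part 3 I would invoke Theorem \ref{U-equivariant modcat}(3): if $({}_L\Mo)^{U_K}\simeq {}_K\Mo$ is a simple $H\rrep$-module category then $({}_L\Mo,U_K,c)$ is simple. It is classical that ${}_K\Mo$ is a simple $H\rrep$-module category exactly when $K$ has no nontrivial $H$-costable (two-sided) ideals, i.e.\ when $K$ is $H$-simple; submodule categories of ${}_K\Mo$ correspond to quotients $K\to K/I$ with $I$ an $H$-costable ideal. For the converse, a surjective morphism of $T_H$-equivariant structures $\alpha\colon U_K\to \widetilde U$ corresponds, via the monad--algebra dictionary as in the proof of Example \ref{exa:mc-hopf}, to a surjection of $H$-comodule algebras $K\to \widetilde K$, whose kernel is an $H$-costable ideal of $K$; if $K$ is $H$-simple this kernel is $0$ or $K$, and the unitality of the comodule algebras forces it to be $0$, so $\alpha$ is an isomorphism and the structure is simple. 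The main obstacle is the bookkeeping in part 1: matching the combinatorics of the explicit structure maps $\mu_V,\eta_V,\xi_{V,W},\phi$ of $T_H$ against the abstract module-functor axioms, and in particular keeping track of the $\otimes_L$ versus $\otimes_{L^e}$ tensorings and the balancing over $L$, so that \eqref{modfunctor1}--\eqref{modfunctor3} genuinely collapse to \eqref{lca1} and \eqref{lca3}. Once part 1 is in place, parts 2 and 3 are essentially formal consequences of Theorem \ref{U-equivariant modcat} and the standard theory of simple module categories over ${}_A\Mo$.
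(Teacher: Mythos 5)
Your parts 1 and 2 follow the paper's route exactly: the paper declares both ``straightforward,'' recording only the module structure $d_{X,V}$ on $U_K^2$ from Lemma \ref{structure of square}, and your identification of \eqref{modfunctor1}--\eqref{modfunctor3} with \eqref{lca1} and \eqref{lca3} is the intended content. For part 3 your main line also matches the paper: $K$ being $H$-simple gives simplicity of ${}_K\Mo\simeq({}_L\Mo)^{U_K}$ (the paper invokes the argument of \cite[Proposition 1.18]{AM}), and Theorem \ref{U-equivariant modcat}(3) then yields simplicity of the equivariant structure.

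There is, however, a logical slip in your part 3. The paragraph you label ``the converse'' in fact re-proves the implication ($K$ $H$-simple $\Rightarrow$ structure simple) a second time, via the monad--algebra dictionary; as written you never argue the implication that is actually still missing, namely that simplicity of $({}_L\Mo,U_K,c)$ forces $K$ to be $H$-simple. The paper handles this by the contrapositive: a nontrivial $H$-costable ideal $I\subseteq K$ yields a surjective morphism of $T_H$-equivariant structures between $U_K$ and $U_{K/I}$ which is not an isomorphism. This is implicit in the correspondence you set up (every costable ideal arises as the kernel of such a surjection), so the repair is one line, but you should state it. Note also that your dictionary tacitly assumes that any equivariant structure $\widetilde U$ receiving a surjection from $U_K$ is again of the form $\widetilde K\otimes_L-$; this uses that $\widetilde U$ is right exact, e.g.\ via Eilenberg--Watts over the semisimple base $L$, and deserves a word. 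Finally, Theorem \ref{U-equivariant modcat}(3) is a one-way implication (simplicity of $\Mo^U$ implies simplicity of the structure), so your assertion that ${}_K\Mo$ is simple \emph{exactly} when $K$ is $H$-simple, while true, cannot by itself be transported through that theorem to give the missing direction.
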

\pf 1. The proof that $\nu_V:  U_K\circ  U_K\to  U_K$ is a module natural
transformation is straightforward.
One has to observe that the module structure on the functor $U_K\circ  U_K$,
given in
Lemma \ref{structure of square},  is
$$d_{X,V}:  K\ot_L K\ot_L (X\ot_L V)\to (H\ot_{L^e} X)\ot_L ( K\ot_L K\ot_L V),
$$
$$d_{X,V}(h\ot g\ot x\ot v)=h\_{-1}g\_{-1}\ot x\ot h\_0\ot g\_0\ot v,$$
for all $h,g\in K, x\in X, v\in V$. The proof of part 2 is straightforward.

3. Assume  there exists a non-trivial $H$-costable ideal $I\subseteq K$.
The canonical projection 
$K\to K/I$ induces a natural morphism $U_{K/I} \to U_K$ that it is not an
isomorphism. Hence 
$({}_L\Mo, U_K, c)$ is not simple. If $K$ is $H$-simple then, by the argument in
the proof of \cite[Proposition 1.18]{AM},
the module category $({}_L\Mo)^{U_K}$ is simple. The result follows from part 2 and 
Theorem \ref{U-equivariant modcat} (3). \epf

\begin{teo}\label{mc-halgd} Assume that the algebra $L$ is simple. Any exact indecomposable
module category over $H\rrep$ is equivalent to ${}_K\Mo$ for some
$H$-simple left $H$-comodule algebra $(K,s_K,\lambda)$.
\end{teo}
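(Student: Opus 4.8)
The plan is to combine the general machinery of Section \ref{mc-cat} with the identification of $\caT$ in the Hopf algebroid setting. Since $L$ is simple (hence semisimple), the category $\ca = {}_L\Mo_L \simeq {}_{L^e}\Mo$ is a finite tensor category, $T_H$ is a right exact faithful Hopf monad on $\ca$ (faithfulness follows since $H$ is nonzero and free over $L^e$, or equivalently since the forgetful functor $\ca^{T_H} \simeq H\rrep \to \ca$ is dominant), and $\ca^{T_H} \simeq H\rrep$ as tensor categories. So every exact indecomposable module category $\Mo$ over $H\rrep$ is, via this equivalence, an exact indecomposable $\ca^{T_H}$-module category.

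Next I would invoke Corollary \ref{modc-eq}: there exists a $T_H$-equivariant indecomposable exact $\ca$-module category $\No$ with simple and exact equivariant structure $U : \No \to \No$ such that $\Mo \simeq \No^U$ as $\ca^{T_H}$-module categories. Now $\ca = {}_L\Mo_L$ and, since $L$ is simple, the only indecomposable exact module category over ${}_L\Mo_L$ (up to equivalence) is ${}_L\Mo$ itself — this is because exact module categories over ${}_L\Mo_L \simeq {}_{L^e}\Mo$ correspond to semisimple module categories, and over the (semisimple) category of $L$-bimodules with $L$ simple the indecomposable one is ${}_L\Mo$. So $\No \simeq {}_L\Mo$ as $\ca$-module categories, and the equivariant structure becomes a monad $U$ on ${}_L\Mo$.

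Then I would classify the simple exact $T_H$-equivariant structures on ${}_L\Mo$. As in Example \ref{exa:mc-hopf} and the first part of Proposition \ref{propert-comdalg}, a right exact $\ku$-linear monad $U$ on ${}_L\Mo$ is, up to isomorphism, of the form $U_K = K \otimes_L -$ for a finite-dimensional $L$-ring $K$ (take $K = U(L)$, which inherits an $L$-ring structure from the monad structure; one uses that $U \simeq U(L)\otimes_L-$ as $\ku$-linear functors since every right exact functor on ${}_L\Mo$ with $L$ simple is of this form). The lax $\ca$-module functor structure $(U_K, c) : {}_L\Mo \to {}_L\Mo(T_H)$ unwinds, via \eqref{modfunctor1} and \eqref{modfunctor2}, to a left $L$-linear coaction map $\lambda : K \to H \otimes_L K$ satisfying \eqref{lca1} and \eqref{lca2}; the requirement that $\nu$ and $u$ be module functor morphisms forces \eqref{lca3}, i.e. that $\lambda$ is an algebra map. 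Hence $(K, s_K, \lambda)$ is a left $H$-comodule algebra, and by Proposition \ref{propert-comdalg} (1)--(2) we get $\Mo \simeq \No^U \simeq ({}_L\Mo)^{U_K} \simeq {}_K\Mo$ as $H\rrep$-module categories. Finally, by Proposition \ref{propert-comdalg} (3), simplicity of the equivariant structure $({}_L\Mo, U_K, c)$ is equivalent to $K$ being $H$-simple, which completes the proof.

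The main obstacle I anticipate is the identification of exact module categories over the base tensor category ${}_L\Mo_L$ (with $L$ simple) — one needs that, up to equivalence, ${}_L\Mo$ is the unique indecomposable exact ${}_L\Mo_L$-module category — together with the careful verification that an arbitrary right exact $\ku$-linear monad on ${}_L\Mo$ is isomorphic to $U_K$ for some $L$-ring $K$ and that the $T_H$-equivariance data translate precisely into the comodule-algebra axioms \eqref{lca1}--\eqref{lca3}. The rest is a matter of assembling Corollary \ref{modc-eq} with Proposition \ref{propert-comdalg}.
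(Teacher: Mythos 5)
Your proposal is correct and follows essentially the same route as the paper's proof: apply Corollary \ref{modc-eq} to get $\Mo\simeq \No^U$, use simplicity of $L$ to identify $\No\simeq {}_L\Mo$ and the exact monad $U$ with $K\otimes_L-$ for $K=U(L)$, translate the equivariance data into the comodule-algebra axioms \eqref{lca1}--\eqref{lca3}, and conclude via Proposition \ref{propert-comdalg}. The steps you flag as obstacles (uniqueness of the exact ${}_L\Mo_L$-module category and the representation of $U$ by an $L$-bimodule) are exactly the points the paper also asserts without further elaboration.
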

\pf It follows from Corollary \ref{modc-eq} that any
exact indecomposable
module category over $H\rrep\simeq ({}_L\Mo_L)^{T_H}$ is of the form $\No^U$
for some exact indecomposable ${}_L\Mo_L$-module category $\No$ and a simple
$T_H$-equivariant structure $(\No, U, c)$. Since $L$ is simple as an algebra,
the only
exact indecomposable ${}_L\Mo_L$-module category is ${}_L\Mo$, hence $\No\simeq
{}_L\Mo$. Since $U:
{}_L\Mo\to {}_L\Mo$ is an exact functor, there exists an $L$-bimodule $K$ such
that
$U(V)=K\ot_L V$ for all $V\in {}_L\Mo$. Let us prove that $K$ is a left
$H$-comodule algebra.

Define $s_K:L\to K, s_K(l)=l\cdot k$ for all $l\in L, k\in K$. For any $X\in
{}_L\Mo_L, V\in {}_L\Mo$
the module structure on $U$ is given by the map
$$c_{X, V}: K\ot_L (X \ot_L V)\to (H\ot_{L^e} X)\ot_L K \ot_L V.$$
Define $\lambda:K\to H\ot_L K$ by
$\lambda(k)=c_{L,L}(k\ot 1\ot 1)$
for all $k\in K$. Equations \eqref{lca1} follows from \eqref{modfunctor1} and
\eqref{modfunctor2}.
Since $U$ is a monad, there exists a module transformation $\mu:U^2\to U$. The
algebra
structure on $K$ is given by $\mu_L:K\ot_L K\to K$. Equations \eqref{lca3}
follows since
$\mu$ is a module transformation. It follows from Proposition
\ref{propert-comdalg} that there
is a module equivalence
$\No^U \simeq {}_K\Mo.$
\epf

\section{ A 2-categorical interpretation}\label{2-cat}

Let us briefly recall the notion of 2-monad in a 2-category. The reader is
refered to \cite{KSt}, \cite{St}.  A \emph{2-category } consists of
\begin{itemize}
 \item a class of objects or 0-cells $Obj(\Bc)$;
\item for a pair of 0-cells $A, B$ a category $\Bc(A,B)$. Objects in $\Bc(A,B)$
are 1-cells and
morphisms are called 2-cells;
\item for any 0-cell $A$ there is a 1-cell $I_A\in \Bc(A,A)$;
\item for any 0-cells $A,B, C$ a functor
$$\circ^{A,B,C} :\Bc(B,C)\times \Bc(A,B)\to\Bc(A,C),$$ such that
it is associative and unitary. Sometimes we shall omit the superscript and
denote
the functor $\circ^{A,B,C}$ simply as $\circ$.
\end{itemize}

If $\Bc, \Bc'$ are 2-categories, a \emph{2-functor} $F:\Bc\to \Bc'$ consists of
the following data:
\begin{itemize}
 \item an  assignment  $F: Obj(\Bc)\to Obj(\Bc')$;
\item for any 0-cells $A, B$ a functor $F_{A,B}: \Bc(A,B) \to \Bc'(F(A),F(B))$
such that
for any 0-cells $A, B, C$ and 1-cells $X\in \Bc(B,C), Y\in \Bc(A,B)$
$$F_{A,C}(X\circ Y)= F_{B,C}(X)\circ F_{A,B}(Y), \quad F_{A,A}(I_A)=I'_{F(A)}.$$

\end{itemize}
If $X \in  \Bc(A,B)$ is a 1-cell or a 2-cell we shall sometimes denote
$F_{A,B}(X)$ simply by
$F(X)$ avoiding subscripts.

If $\Bc, \Bc'$ are 2-categories and $F, G: \Bc\to \Bc'$ are 2-functors, a\emph{
2-natural
transformation} $\theta:F\to G$ consists of the following data:
\begin{itemize}
 \item  for any 0-cell $A\in Obj(\Bc)$ a 1-cell $\theta_A\in \Bc'(F(A),G(A))$;

\item for any 0-cells $A, B\in Obj(\Bc)$ and any 1-cell $X\in  \Bc(A,B)$ a
natural
transformation
$$ \theta_X: \theta_B\circ F_{A,B}(X)  \to  G_{A,B}(X)  \circ \theta_A, $$
such that for any 0-cell $A$ and any 1-cells $X, Y$
$$\theta_{X \circ Y}=  (\id\circ \theta_Y)(\theta_X\circ \id), \quad
\theta_{I_A}=\id_{\theta_A}.$$
\end{itemize}

\begin{defi}\label{defi:2-monad} 1. Let $\Bc$ be a 2-category. A \emph{2-monad}
over $\Bc$ is a strict
monad, in the sense of \cite[ Definition  5.4.1]{Be}, inside the 2-category of
2-categories. Explicitly, a 2-monad is a
collection $(\Tt, \mu, \eta)$ where $\Tt: \Bc\to \Bc$ is a 2-functor, $\mu: \Tt^2\to
\Tt$ and $\eta:\Id\to \Tt$
 are 2-natural transformations satisfying
$$ \mu_A\circ \mu_{\Tt(A)}= \mu_A\circ \Tt_{\Tt^2(A), \Tt(A)}(\mu_A),
\quad \mu_A\circ \Tt_{A,\Tt(A)}(\eta_A)=I_{\Tt(A)}=\mu_A\circ  \eta_{\Tt(A)},$$
 $$\mu_X \mu_{\Tt(X)}= \mu_X \Tt(\mu_X),\quad       \mu_X \eta_{\Tt(X)}=\id_{\Tt(X)}=
\mu_X \Tt(\eta_X),$$
for any 0-cells $A, B,$ and any 1-cell $X\in  \Bc(A,B)$.
\end{defi}

\begin{exa} Let $\ca$ be a strict monoidal category. Associated to
$\ca$ there is a 2-category $\cab$ with a single object $0$. Namely,
$\cab(0,0)=\ca$ and the composition is the monoidal product in $\ca$. A bimonad
$T:\ca\to \ca$, with strict comonoidal structure, gives rise to a 2-monad
$\underline{T}:\cab \to \cab$;
$\underline{T}(0)=0$ and $\underline{T}_{0,0}=T$.
\end{exa}

If $\ca$ is a tensor category, we shall denote by ${}_\ca\Mod$, respectively
${}_\ca\Mod^{lax}$, the 2-categories whose 0-cells are  left  $\ca$-module
categories,
1-cells are $\ca$-module functors (respectively, lax $\ca$-module functors) and
2-cells are $\ca$-module natural transformations.

Let  $T$ be a Hopf monad  on $\ca$. Define the 2-functor
$\Tt:{}_\ca\Mod^{lax}\to {}_\ca\Mod^{lax}$ as follows. For any
$\Mo\in {}_\ca\Mod^{lax}$, set $\Tt(\Mo)=\Mo(T)$, see Subsection \ref{subsection:t-eq}.
For any pair $\Mo, \No\in {}_\ca\Mod^{lax}$,  set $$\Tt_{\Mo, \No}: \Funl_\ca(\Mo, \No)\to \Funl_\ca(\Mo(T), \No(T))$$ to be the functor
defined by 
$$\Tt_{\Mo, \No}(G,d)=(G, d_{T(-), -}).$$

\begin{lema}\label{monad-to-2-monad} The 2-functor $\Tt:{}_\ca\Mod^{lax}\to
{}_\ca\Mod^{lax}$ has a structure of
2-monad.
\end{lema}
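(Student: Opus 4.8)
The plan is to construct the comonoidal structure (the 2-natural transformations $\mu:\Tt^2\to\Tt$ and $\eta:\Id\to\Tt$) directly from the monad structure $(T,\mu,\eta)$ of the Hopf monad, and then check the 2-monad axioms of Definition \ref{defi:2-monad} componentwise, reducing each to an axiom of $T$ as a monad or to naturality. First I would unwind what $\Tt^2(\Mo)$ is: applying $\Mo\mapsto\Mo(T)$ twice yields the lax $\ca$-module with action $X\otb^{T^2}M=T^2(X)\otb M$, associativity built from $\xi$ applied at the $T^2$ level, and unit from $\phi$; that is, $\Tt^2(\Mo)=\Mo(T^2)$ where $T^2$ carries the composite comonoidal structure (this is precisely the comonoidal structure appearing in Lemma \ref{structure of square}, which computes the module-functor structure $d$ on $U^2$). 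The candidate $\mu_\Mo:\Tt^2(\Mo)=\Mo(T^2)\to\Mo(T)=\Tt(\Mo)$ should be the identity functor on the underlying abelian category $\Mo$, equipped with the lax $\ca$-module-functor structure given by $\mu_X\otb\id_M: T^2(X)\otb M\to T(X)\otb M$; the candidate $\eta_\Mo:\Mo\to\Mo(T)$ is the identity functor with structure $\eta_X\otb\id_M$, exactly the module-functor structure already recorded in the Remark following Definition \ref{defi-temc}.

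The key steps, in order, are: (i) verify that $(\Id_\Mo,\mu_{(-)}\otb\id)$ and $(\Id_\Mo,\eta_{(-)}\otb\id)$ are indeed lax $\ca$-module functors $\Mo(T^2)\to\Mo(T)$ and $\Mo\to\Mo(T)$ — equations \eqref{modfunctor1}, \eqref{modfunctor2} for $\mu$ follow from \eqref{hopfmonad3} and \eqref{hopfmonad2}--\eqref{hopfmonad11}, and for $\eta$ from the comonoidality of $\eta$ in \eqref{hopfmonad11}; (ii) verify 2-naturality of $\mu$ and $\eta$, i.e. that for every lax module functor $(G,d):\Mo\to\No$ the squares relating $\mu_\No\circ\Tt^2(G,d)$ to $\Tt(G,d)\circ\mu_\Mo$ (and likewise for $\eta$) commute — but since $\mu_\Mo$, $\eta_\Mo$ and $\Tt(G,d)$ are all the \emph{identity} on underlying functors, 2-naturality reduces to an equality of module-functor structure maps, which is just the naturality of $\mu$ (resp.\ $\eta$) as a natural transformation of functors $\ca\to\ca$; (iii) check the four 2-monad axioms: the associativity $\mu_\Mo\circ\mu_{\Tt(\Mo)}=\mu_\Mo\circ\Tt(\mu_\Mo)$ holds on underlying functors trivially (all identities) and on structure maps is exactly the associativity $\mu_X T(\mu_X)=\mu_X\mu_{T(X)}$ from \eqref{monads}; the two unit axioms come from $\mu_X\eta_{T(X)}=\id=\mu_X T(\eta_X)$ in \eqref{monads}; and the axioms involving a 1-cell $X$ (the $\mu_X\mu_{\Tt(X)}=\mu_X\Tt(\mu_X)$ line in Definition \ref{defi:2-monad}) are vacuous or immediate here because $\Tt$ acts as the identity on underlying functors and the relevant $2$-cells $\mu_X$ are identities.

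I would organize the writeup as: define $\mu$ and $\eta$ explicitly as above; state that their underlying functors are identities so that all coherence/naturality conditions collapse to statements about the natural transformations $\mu:T^2\to T$, $\eta:\Id\to T$; then dispatch each axiom by citing the corresponding line of \eqref{monads}, \eqref{hopfmonad1}--\eqref{hopfmonad11}, or naturality. The one genuinely substantive point — and the main obstacle — is the bookkeeping in step (i)--(ii): one must be careful that the composite comonoidal structure on $T^2$ used implicitly in $\Tt^2(\Mo)$ really is the one produced by Lemma \ref{structure of square}, and that $\Tt_{\Mo,\No}(G,d)=(G,d_{T(-),-})$ interacts correctly with it, i.e.\ that the formula $d^{T^2}_{X,M}$ obtained by applying $\Tt$ twice agrees with $d_{T^2(X),M}$ composed with the appropriate $\xi$'s. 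Once that identification is made explicit, every remaining verification is a diagram chase that reduces to a single already-established identity, so no essentially new computation is needed beyond what appears in Lemma \ref{structure of square} and the bimonad axioms.
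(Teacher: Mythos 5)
Your construction of $\mu_\Mo=(\Id_\Mo,\mu_{(-)}\otb\id)$ and $\eta_\Mo=(\Id_\Mo,\eta_{(-)}\otb\id)$, the reduction of the module-functor conditions to \eqref{hopfmonad3} and \eqref{hopfmonad11}, the observation that 2-naturality collapses to a naturality square because all underlying functors are identities, and the reduction of the 2-monad axioms to \eqref{monads} is exactly the proof given in the paper. Your extra care in identifying $\Tt^2(\Mo)$ with $\Mo(T^2)$ (with the composite comonoidal structure) is a point the paper leaves implicit, so the proposal is correct and follows essentially the same route.
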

\pf We shall define 2-natural transformations $\mu:\Tt^2\to \Tt, \eta: \Id\to \Tt$
such that
$(\Tt, \mu, \eta)$ is a 2-monad. Note that, abusing of the notation, we are
denoting with the same
symbols the  2-monad  structure on $\Tt$ and the monad structure on $T$.
\medbreak

For any
$\Mo\in {}_\ca\Mod^{lax}$ define $\eta_{\Mo}\in \Funl_\ca(\Mo, \Mo(T))$ the lax
$\ca$-module functor
as
$\eta_{\Mo}=(\Id_\Mo, \eta\otb \id)$. Here the module structure of the identity
functor is
given by
$$\eta_X\otb \id_M: X\otb M\to T(X)\otb M,$$
for any $X\in \ca, M\in  \Mo$. It follows from \eqref{hopfmonad11} that
$(\Id_\Mo, \eta\otb \id)$ is indeed a module functor. To give a structure of
2-natural transformation on $\eta$, for any $(G,d)\in \Funl_\ca(\Mo, \No)$, we
must define natural
transformations
$$\eta_{(G,d)}: \eta_\Mo\circ (G,d) \to  \Tt_{\Mo, \No}(G,d) \circ \eta_\No. $$
Since both functors are equal, we let $\eta_{(G,d)}$ to be the identity natural
transformation.
Now, let us define the 2-natural transformation $\mu:\Tt^2\to \Tt$.
For any $\Mo\in {}_\ca\Mod^{lax}$, let $\mu_\Mo=(\Id_\Mo, \mu\otb  \id)$. It
follows
from \eqref{hopfmonad3} that  this functor is indeed
a module functor.  For any $\Mo, \No\in {}_\ca\Mod^{lax}$, $(G,d)\in
\Funl_\ca(\Mo, \No)$, we must define natural
transformations
$$\mu_{(G,d)}: \mu_\Mo\circ \Tt^2_{\Mo, \No}(G,d) \to  \Tt_{\Mo, \No}(G,d)\circ
\mu_\No. $$
Since both functors are equal, we define $\mu_{(G,d)}$ the identity natural
transformation.
Conditions of Definition \ref{defi:2-monad} are readily verified.\epf

 In the next subsection we shall give an interpretation of the
process of equivariantization of module categories in the form of a 2-category equivalence between the 2-category ${}_\ca\Mod^{lax}$ with an appropriate equivariantization of the  2-category of $T$-equivariant lax
$\ca$-module categories, that we define next. 

\begin{defi} Let $\ca$ be a tensor category and $T:\ca\to \ca$ be a
Hopf monad.  The 2-category  ${}_\ca\eqmod$ of $T$-equivariant lax
$\ca$-module categories is defined as follows: 
  0-cells are $T$-equivariant $\ca$-module categories $(\Mo, U, c)$. If
$(\Mo, U, c)$, $(\widetilde{\Mo}, \widetilde{U}, \widetilde{c})$ are
$T$-equivariant $\ca$-module categories, 1-cells are pairs
$(G,\theta):(\Mo, U, c)\to (\widetilde{\Mo}, \widetilde{U}, \widetilde{c}) $,
where $G:\Mo\to \widetilde{\Mo}$ is a $\ca$-module functor and 
$\theta:  \widetilde{U}\circ G\to \widehat{T}(G)\circ U$ is a natural
transformation satisfying condition 
\eqref{2-nat-f-eq},  that is,
$$\theta_M \widetilde{\nu}_{G(M)}=G(\nu_M)\theta_{U(M)}\widetilde{U}(
\theta_M),\quad
 \theta_M \widetilde{u}_{ G(M)}=G(u_M),
$$
for all $M\in \Mo$.
 If $(H, \chi)$ is another 1-cell, a 2-cell 
$\alpha:(G,\theta) \Rightarrow(H, \chi)$ is a
$\ca$-module natural transformation $\alpha: G\to H$ satisfying condition 
\eqref{2-nat-f-eq2}, that is,
$$ \alpha_{U(M)}\theta_M=\chi_M  \widetilde{U}(\alpha_M),$$ for all $M\in \Mo$.
\end{defi}

\subsection{Equivariantization of 2-categories}

 Let $\Bc$ be a 2-category and let $(F, \mu, \eta):\Bc\to \Bc$ be a 2-monad on $\Bc$. 
 We start by giving a description of the 2-category of $\Bc^F$ of $F$-equivariant objects in $\Bc$.

The  horizontal  composition  of $1$ or $2$-cells  in the 2-category $\Bc$ will be denoted by $\circ$,
omitting the
superscripts, and the  vertical  composition  of $2$-cells  will be indicated by juxtaposition of morphisms.

\begin{defi} An \emph{equivariant object}  (or \emph{equivariant 0-cell})  is a collection $(A, U, \nu, u)$
where
\begin{itemize}
 \item $A$ is a 0-cell in $\Bc$;

\item $U:A\to F(A)$ is a 1-cell in $\Bc$;

\item $\nu: \mu_A\circ F(U)\circ U\Rightarrow U$, $u:\eta_A\Rightarrow U$ are
2-cells, such that
\begin{equation} \nu (\id_{\mu_A\circ F(U)} \circ \nu)(\id_{\mu_A}\circ \mu_U\circ
\id_{F(U)\circ U})
=\nu (\id_{\mu_A}\circ F(\nu)\circ \id_{U}),
\end{equation}
\begin{equation}  \nu (\id_{\mu_A}\circ F(u)\circ \id_U)=\id_U=  \nu
(\id_{\mu_A\circ F(U)}\circ  u) (\id_{\mu_A}\circ \eta_U).
\end{equation}
\end{itemize}
\end{defi}

\medbreak Let $(A, U, \nu, u)$, $(\widetilde{A}, \widetilde{U}, \widetilde{\nu},
\widetilde{u})$ be $F$-equivariant objects.     An  \emph{equivariant 1-cell} 
$(\theta, \theta^0):(A, U, \nu, u) \to (\widetilde{A}, \widetilde{U},
\widetilde{\nu}, \widetilde{u})$
consists of
\begin{itemize}
 \item  a 1-cell $\theta:A\to \widetilde{A}$;

 \item a 2-cell $\theta^0: \widetilde{U}\circ \theta\Rightarrow F(\theta)\circ
U$,
\end{itemize}
satisfying  the following conditions:
\begin{equation*} \theta^0 (\widetilde{u}\circ \id_\theta)=(\id_{F(\theta)}\circ
u) \eta_\theta,
\end{equation*}
\begin{equation*}(\id_{F(\theta)}\circ \nu)(\mu_\theta \circ \id_{F(U)\circ
U})(\id_{\mu_{ \widetilde{A}}}
\circ F(\theta^0)\circ \id_U)
(\id_{\mu_{ \widetilde{A}}\circ F( \widetilde{U})}\circ \theta^0 )= \theta^0
(\widetilde{\nu} \circ \id_\theta).
\end{equation*}

Let $(A, U, \nu, u)$ and $(\widetilde{A}, \widetilde{U}, \widetilde{\nu},
\widetilde{u})$ be 
 $F$-equivariant objects and let 
$(\theta, \theta^0)$, $(\chi, \chi^0):(A, U, \nu, u) \to (\widetilde{A},
\widetilde{U}, \widetilde{\nu}, \widetilde{u})$
be  equivariant 1-cells.  An  
\emph{equivariant 2-cell} $(\theta, \theta^0) \Rightarrow (\chi, \chi^0)$ is a 2-cell  $\alpha: \theta \Rightarrow \chi$  such that 
\begin{equation} \chi^0 (\id_{\widetilde{U}}\circ \alpha)=(F(\alpha)\circ \id_U)
\theta^0.
\end{equation}

 The proof of the following proposition is left to the reader.

\begin{prop} Equivariant 0-cells, equivariant 
1-cells  and equivariant 2-cells form a 2-category 
with respect to composition of  1-cells  $(\theta, \theta^0):(A, U, \nu, u) \to (\widetilde{A}, \widetilde{U},
\widetilde{\nu}, \widetilde{u})$
and $(\chi, \chi^0): (A', U', \nu', u') \to (A, U, \nu, u)$  defined by
\begin{equation}(\theta, \theta^0) \circ (\chi, \chi^0)=(\theta \circ \chi,
(\id\circ \chi^0)(\theta^0\circ\id)),
\end{equation}
and  vertical and horizontal compositions  of equivariant 2-cells given as the corresponding compositions in the 2-category
$\Bc$. \qed
\end{prop}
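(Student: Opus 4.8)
The plan is to verify the $2$-category axioms directly, reducing every manipulation to an identity already valid in $\Bc$ together with the $2$-naturality of $\mu$ and $\eta$ (i.e.\ the coherence $2$-cells $\mu_X$, $\eta_X$ and the relations $\mu_{X\circ Y}=(\id\circ\mu_Y)(\mu_X\circ\id)$, $\mu_{I_A}=\id$, and likewise for $\eta$), functoriality of $F$, and the interchange law. Four things must be checked: \textbf{(a)} for fixed equivariant $0$-cells the equivariant $1$- and $2$-cells form a category under vertical composition; \textbf{(b)} the proposed identity $1$-cells and the stated composition of $1$-cells send equivariant data to equivariant data; \textbf{(c)} composition is functorial on $2$-cells; and \textbf{(d)} the associativity and unit laws hold.

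For \textbf{(a)}, given equivariant $2$-cells $\alpha\colon(\theta,\theta^0)\Rightarrow(\chi,\chi^0)$ and $\beta\colon(\chi,\chi^0)\Rightarrow(\psi,\psi^0)$, I would paste the two instances of the equivariance relation $\chi^0(\id_{\widetilde U}\circ\alpha)=(F(\alpha)\circ\id_U)\theta^0$, using that $F$ preserves vertical composition and the interchange law, to obtain $\psi^0(\id_{\widetilde U}\circ\beta\alpha)=(F(\beta\alpha)\circ\id_U)\theta^0$; the identity $2$-cell $\id_\theta$ is equivariant at once. For the identity $1$-cell on $(A,U,\nu,u)$ one uses $F(I_A)=I_{F(A)}$, so that $\widetilde U\circ I_A=U=F(I_A)\circ U$, and takes $(I_A,\id_U)$; the two defining conditions for an equivariant $1$-cell then collapse to identities once the relevant identity $2$-cells are cancelled (here $\mu_{I_A}=\id$ and $\eta_{I_A}=\id$ are used).

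The substantial point is showing that the composite $(\theta\circ\chi,(\theta\circ\chi)^0)$ with $(\theta\circ\chi)^0=(\id_{F(\theta)}\circ\chi^0)(\theta^0\circ\id_\chi)$ is again an equivariant $1$-cell. The $u$-condition follows by substituting the $u$-conditions for $\theta^0$ and for $\chi^0$, using the interchange law repeatedly, and recognizing the coherence $\eta_{\theta\circ\chi}=(\id_{F(\theta)}\circ\eta_\chi)(\eta_\theta\circ\id_\chi)$. For the $\nu$-condition one expands the left-hand pasting, uses functoriality of $F$ to distribute $F$ over the horizontal composite defining $(\theta\circ\chi)^0$, then uses the $2$-naturality of $\mu$ --- precisely the cell $\mu_\theta$ and the relation $\mu_{X\circ Y}=(\id\circ\mu_Y)(\mu_X\circ\id)$ --- to move $\mu$ past $F(\theta)$, and finally applies the $\nu$-conditions for $\theta^0$ and for $\chi^0$ separately together with interchange. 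I expect this pasting-diagram bookkeeping, in particular tracking the whiskerings and the position of $\mu_\theta$, to be the main obstacle; everything else is routine.

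Finally, for \textbf{(c)} I would check that the horizontal composite of two equivariant $2$-cells is equivariant and that horizontal composition respects vertical composition and identities, all consequences of the interchange law in $\Bc$ and functoriality of $F$; and for \textbf{(d)} the associativity and unit laws, which hold because the underlying $1$-cells compose associatively and unitally in $\Bc$, while the two bracketings of the $\theta^0$-data coincide by interchange together with the coherence of the $2$-natural structures of $\mu$ and $\eta$. Assembling \textbf{(a)}--\textbf{(d)} yields the asserted $2$-category.
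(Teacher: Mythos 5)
Your proposal is correct and is exactly the direct verification the paper intends: the authors explicitly leave this proof to the reader, and your checklist (closure of equivariance under the stated compositions via functoriality of $F$, the interchange law, and the $2$-naturality coherences $\mu_{X\circ Y}=(\id\circ\mu_Y)(\mu_X\circ\id)$, $\eta_{X\circ Y}=(\id\circ\eta_Y)(\eta_X\circ\id)$, $\mu_{I_A}=\id$, $\eta_{I_A}=\id$, together with the unit/associativity axioms inherited from $\Bc$) identifies precisely the ingredients needed, including the genuinely nontrivial point that the composite of equivariant $1$-cells satisfies the $\nu$-condition. As a conceptual cross-check, the paper's Remark \ref{ilf} observes that equivariant objects are monads in the Kleisli $2$-category of $F$, so the proposition also follows from the general fact that monads, monad $1$-cells and monad $2$-cells in a $2$-category form a $2$-category.
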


This 2-category will be called the \emph{2-category of $F$-equivariant objects} in $\Bc$ and will be denoted by $\Bc^F$.

%\begin{def} For any 2-category  $\Bc$ and a 2-monad $(F, \mu, \eta):\Bc\to
%\Bc$ the
%2-category of $F$-equivariant objects, that we shall be denote by $ \Bc^F$, has
% 0-cells  $F$-equivariant objects $(A, U, \nu, u)$,
%1-cells are morphisms $(\theta, \theta^0):(A, U, \nu, u)
% \to (\widetilde{A}, \widetilde{U}, \widetilde{\nu}, \widetilde{u})$ and
%2-cells are equivariant 2-cells $\alpha: \theta \Rightarrow \chi$.
%\end{def}

\begin{rmk}\label{ilf} An $F$-equivariant object in $\Bc$ could be explained alternatively as a
monad, in the sense of
\cite[Definition 5.4.1]{Be}, inside the Kleisli 2-category associated with
$F$, and the 2-category $ \Bc^F$ as the 2-category of monads
 inside the Kleisli 2-category.
\end{rmk}

The following result is a straightforward application of the definitions.
\begin{prop}\label{equiv-eqmod} Let $\ca$ be a tensor category and $T:\ca\to \ca$ be a
Hopf monad. Let   $\Tt:{}_\ca\Mod^{lax}\to {}_\ca\Mod^{lax}$
be the 2-monad associated to $T$ des\-cribed in Lemma \ref{monad-to-2-monad}.
There exists a
2-equivalence of 2-categories
$${}_\ca\eqmod \simeq   ({}_\ca\Mod^{lax})^\Tt. $$ \qed 
\end{prop}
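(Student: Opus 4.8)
The plan is to unwind both sides of the claimed 2-equivalence and observe that they have, by construction, literally the same data, so the equivalence is in fact an isomorphism of 2-categories realized by the identity assignments on 0-, 1- and 2-cells. Concretely, I would first spell out what an object, 1-cell and 2-cell of $({}_\ca\Mod^{lax})^\Tt$ is, by specializing the general Definition of $\Bc^F$ to $\Bc = {}_\ca\Mod^{lax}$ and $F = \Tt$. A $\Tt$-equivariant 0-cell is a quadruple $(\Mo, U, \nu, u)$ where $\Mo$ is a lax $\ca$-module category, $U\colon \Mo \to \Tt(\Mo) = \Mo(T)$ is a $1$-cell, i.e.\ a lax $\ca$-module functor, and $\nu\colon \mu_\Mo \circ \Tt(U)\circ U \Rightarrow U$, $u\colon \eta_\Mo \Rightarrow U$ are $\ca$-module natural transformations satisfying the two equivariance axioms. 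The point is that $\mu_\Mo = (\Id_\Mo, \mu \otb \id)$ and $\eta_\Mo = (\Id_\Mo, \eta\otb\id)$ as computed in Lemma~\ref{monad-to-2-monad}, and $\Tt(U) = (U, d_{T(-),-})$; composing these, the underlying functor of $\mu_\Mo\circ\Tt(U)\circ U$ is exactly $U^2$ and its module structure is precisely the map $d$ of Lemma~\ref{structure of square}. Hence the data $(\nu, u)$ is exactly a monad structure on $U$ for which $\nu$ and $u$ are module-functor morphisms, and the two equivariance axioms of $\Bc^\Tt$ reduce to the monad axioms \eqref{monads}. So a $\Tt$-equivariant 0-cell is the same thing as a $T$-equivariant $\ca$-module category in the sense of Definition~\ref{defi-temc}.

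Next I would run the same comparison one categorical level down. An equivariant $1$-cell $(\theta, \theta^0)\colon (\Mo, U, \nu, u) \to (\widetilde{\Mo}, \widetilde U, \widetilde\nu, \widetilde u)$ in $({}_\ca\Mod^{lax})^\Tt$ consists of a $\ca$-module functor $\theta\colon \Mo \to \widetilde{\Mo}$ together with a $\ca$-module natural transformation $\theta^0\colon \widetilde U\circ\theta \Rightarrow \Tt(\theta)\circ U$; unwinding $\Tt(\theta) = \widehat T(\theta)$ (in the notation preceding the Proposition before Theorem~\ref{U-equivariant modcat}, $\Tt$ on $1$-cells is exactly the operation $G\mapsto \widehat T(G)$), the two compatibility conditions defining an equivariant $1$-cell become exactly the pair of identities \eqref{2-nat-f-eq}. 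Likewise an equivariant $2$-cell is a $\ca$-module natural transformation $\alpha$ satisfying precisely \eqref{2-nat-f-eq2}. Comparing with the definition of ${}_\ca\eqmod$, one sees the $0$-cells, $1$-cells and $2$-cells match on the nose. I would then check that the composition of $1$-cells in $\Bc^\Tt$, namely $(\theta,\theta^0)\circ(\chi,\chi^0) = (\theta\circ\chi, (\id\circ\chi^0)(\theta^0\circ\id))$, agrees with composition in ${}_\ca\eqmod$, and that identity $1$-cells, vertical and horizontal composition of $2$-cells agree; all of these are immediate from the definitions because in both $2$-categories the ambient compositions are those of ${}_\ca\Mod^{lax}$.

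Having set all this up, the proof is essentially: \emph{define} the $2$-functor $\Psi\colon {}_\ca\eqmod \to ({}_\ca\Mod^{lax})^\Tt$ by sending $(\Mo, U, c)$ (with monad data $\nu, u$) to $(\Mo, U, \nu, u)$, sending $(G,\theta)$ to $(G,\theta)$ and $\alpha$ to $\alpha$, and observe it is strictly functorial and bijective on objects, $1$-cells and $2$-cells, with inverse the analogous identity assignment. Strictly speaking one should verify $\Psi$ respects all composites and identities, but by the dictionary above each such verification is a tautology. I would record this as: the assignments above are mutually inverse strict $2$-functors, hence yield a (strict, even isomorphism) $2$-equivalence ${}_\ca\eqmod \simeq ({}_\ca\Mod^{lax})^\Tt$.

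The only genuine content — and the step most likely to need care — is the bookkeeping in the previous paragraph: confirming that the module-functor structure on $\mu_\Mo\circ\Tt(U)\circ U$ obtained by composing the three structures via \eqref{modfunctor-comp} really coincides with the map $d$ of Lemma~\ref{structure of square}, and that the abstract equivariance axioms of $\Bc^\Tt$ unwind, under this identification, to the monad axioms together with \eqref{2-nat-f-eq}, \eqref{2-nat-f-eq2}. These are exactly the computations already carried out (or set up) in Lemma~\ref{structure of square}, Theorem~\ref{U-equivariant modcat} and the Proposition following it, so the obstacle is notational rather than mathematical; once the dictionary is in place the $2$-equivalence is immediate, which is why the statement is flagged as ``a straightforward application of the definitions''.
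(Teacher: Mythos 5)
Your proposal is correct and takes essentially the same route as the paper, which offers no written proof at all and simply declares the result a straightforward application of the definitions: the dictionary you set up --- identifying $\mu_\Mo\circ\Tt(U)\circ U$ with the lax module functor $(U^2,d)$ of Lemma \ref{structure of square}, the equivariance axioms for $0$-cells with the monad axioms \eqref{monads}, and the $1$- and $2$-cell conditions with \eqref{2-nat-f-eq} and \eqref{2-nat-f-eq2} --- is exactly the intended identification, realized by identity assignments. You in fact supply more detail than the paper does.
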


\section{ Module categories and exact sequences }\label{mc-exact}

Let $\ca$, $\Do$ be tensor
categories over $\ku$. Recall that a \emph{normal} tensor functor $F: \ca \to \Do$
is a tensor functor such that for any object $X$ of $\ca$, there
exists a subobject $X_0 \subset X$ such that $F(X_0)$ is the
largest trivial subobject of $F(X)$.

If the functor $F$ has a right adjoint $R$, then $F$ is normal if and only if
$R(\uno)$ is a trivial object of $\ca$ \cite[Proposition 3.5]{BN}.

\medbreak Let $\ca', \ca, \ca''$ be tensor categories over $\ku$. A sequence of
tensor functors
\begin{equation}\label{exacta-fusion}\xymatrix{\ca' \ar[r]^f & \ca \ar[r]^F &
\ca''}
\end{equation}
is called and \emph{exact sequence of tensor categories} if the
following hold:
\begin{itemize}
\item  The tensor functor $F$ is dominant and normal;
\item The tensor functor $f$ is a full embedding;
\item The essential image of $f$ is $\KER_F$.
\end{itemize}
See \cite{BN}. Here, $\KER_F$ is the full tensor subcategory
$F^{-1}(\langle \uno \rangle) \subseteq \ca$  of objects $X$ of
$\ca$ such that $F(X)$ is a trivial object of $\ca''$.

\medbreak Suppose \eqref{exacta-fusion} is an exact sequence of tensor
categories. 
Since the functor $F$ is normal, then it induces a fiber functor  $\omega_F:
\ca' \to
\vect_\ku$ in the form 
$\omega_F(X) = \Hom_{\ca''}(\uno, Ff(X))$. 

The \emph{induced Hopf algebra} $H$ of the exact sequence
\eqref{exacta-fusion} is defined as
the coend of the fiber functor $\omega_F: \ca' \to \vect_k$,  that is, $H =
\int^{X \in \ca'}\omega_F(X)^\vee \otimes \omega_F(X)$. In particular, we have
an equivalence
of tensor categories $\ca' \simeq \corep H$.    See \cite[Subsection 3.3]{BN}.

\medbreak Recall that a $\ku$-linear right exact Hopf monad $T$ on a tensor
category
$\ca''$ is called \emph{normal} if $T(\uno)$ is a trivial object of $\ca''$. By
\cite[Theorem 5.8]{BN} exact sequences \eqref{exacta-fusion} with finite dimensional induced Hopf
algebra $H$ are classified by normal faithful right exact $\ku$-linear Hopf
monads $T: \ca'' \to \ca''$, such that the Hopf monad of the restriction of $T$
to the trivial subcategory of $\ca''$ is isomorphic to $H$.

\subsection{The Hopf monad of a Hopf algebra extension} Consider an exact
sequence of finite dimensional Hopf algebras
\begin{equation}\label{sec-hopf}\ku \lto K \overset{i}\lto H \overset{\pi}\lto
\overline H \lto \ku. \end{equation}
In view of \cite[Proposition 3.9]{BN} \eqref{sec-hopf} induces an exact
sequence of finite tensor categories
\begin{equation}\label{sec-mod}\rep\overline H \overset{\pi^*}\lto \rep H
\overset{i^*}\lto \rep K. \end{equation}

In this subsection we shall give an
explicit description of the normal Hopf monad $T$ on $\rep K$ corresponding to
the exact sequence \eqref{sec-mod} in terms of the cohomological data
classifying the Hopf algebra extension \eqref{sec-hopf}.

\medbreak As a consequence of the Nichols-Zoeller freeness theorem, the exact
sequence \eqref{sec-hopf} is cleft. Therefore there exist maps
\begin{align}& . : \overline H \otimes K \to K, \qquad \sigma: \overline H
\otimes \overline H \to K, \\
& \rho: \overline H \to \overline H \otimes K, \qquad \tau: \overline H \to K
\otimes K,  \end{align} obeying the compatibility conditions in \cite[Theorem
2.20]{AD},  such that $H$ is isomorphic as a Hopf algebra to the bicrossed
product $K ^\tau\#_\sigma \overline H$. Recall that the structure of $K
^\tau\#_\sigma \overline H$ is determined by the data $(., \sigma, \rho, \tau)$
as follows:
\begin{align*} & a\#x \, . \, b\#y = a(x_{(1)}.b) \sigma(x_{(2)}, y_{(1)}) \#
x_{(3)}, y_{(2)},  \\
& \Delta(a\# x) = \Delta(a) \tau(x_{(1)}) \, (1 \# (x_{(2)})_{(0)} \otimes
(x_{(2)})_{(1)} \# x_{(3)}), \\ & 1_{K ^\tau\#_\sigma \overline H} = 1 \# 1,
\quad  \epsilon(a\# x) = \epsilon (a) \epsilon(x),
\end{align*} for all $a, b \in K$, $x, y \in \overline H$, where we use
Sweedler's notation $\rho(x) = x_{(0)} \otimes x_{(1)} \in \overline H \otimes
K$, for every $x \in \overline H$.

\medbreak In what follows we shall use the identifications $H = K ^\tau\#_\sigma
\overline H$,  $K \simeq K \# 1 \subseteq H$ and $\pi = \epsilon \otimes \id : H
\to \overline H$.
In this way, the normal tensor functor $F = i^*: \rep H \to \rep K$ corresponds
to the restriction functor $\Res^H_K$.

It is well known that the induction functor $L = \Ind^H_K: \rep K \to \rep H$ is
left adjoint
of $F$, where for every right $K$-module $W$, $\Ind^H_K(W) = W \otimes_KH$.
 It is clear from the formula defining the multiplication of $K ^\tau\#_\sigma
\overline H$
 that $H \simeq K \otimes \overline H$ as left  $K$-modules, where $K$ acts by
left multiplication in the first tensorand on $K \otimes \overline H$. Hence we
obtain  natural isomorphisms
$$r_W: L(W) \simeq W \otimes \overline H, \quad r_W(w \otimes a \# x) = (w
\leftharpoonup a) \otimes x,$$ for every finite-dimensional right $K$-module
$W$,  $w \in W$, $a\in K$, $x \in \overline H$, where $\leftharpoonup: W \otimes
K \to W$ denotes the $K$-module structure on $W$.

\begin{prop}\label{monad-ext} The normal Hopf monad $T: \rep K \to \rep K$
associated to the exact sequence \eqref{sec-mod} is given by $T(W) = W \otimes
\overline H$, where the $K$-action is defined as $$(w \otimes x) \leftharpoonup
a = w(x_{(1)}.a) \otimes x_{(2)},$$ for all $w \in W$, $x \in \overline H$,
$a\in K$. For all $W, U \in \rep K$, the multiplication $\mu_W: W \otimes
\overline H \otimes \overline H \to W \otimes \overline H$, counit $\eta_W: W
\to W \otimes \overline H$, and comonoidal structure $\xi_{W, U}: W \otimes U
\otimes \overline H \to W \otimes \overline H \otimes U \otimes \overline H$,
$\phi: \ku \otimes \overline H \to \ku$, of $T$ are determined, respectively, as
follows:
$$\begin{aligned} \mu_W(w \otimes x\otimes y) & = w \leftharpoonup
\sigma(x_{(1)}, y_{(1)}) \otimes x_{(2)} y_{(2)},\\
 \eta_W(w) & = w \otimes 1, \\
 \xi_{W, U} (w \otimes u \otimes x) & = (w \otimes u) \leftharpoonup
\tau(x_{(1)})(1 \otimes (x_{(2)})_{(1)}) \, \otimes (x_{(2)})_{(0)} \otimes
x_{(3)}, \\
 \phi & = \id \otimes \epsilon_{\overline H}: \ku \otimes \overline H \to \ku,
 \end{aligned}$$ for all $w \in W$, $u \in U$, $x, y\in \overline H$. \end{prop}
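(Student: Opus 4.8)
The plan is to identify $T$ with the monad of the adjunction $\Ind^H_K\dashv\Res^H_K$ and then transport every piece of structure along the natural isomorphism $r_W\colon \Ind^H_K(W)\xrightarrow{\sim}W\otimes\overline H$ fixed above, reading the answers off from the multiplication and comultiplication of the bicrossed product $H=K ^\tau\#_\sigma \overline H$.

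\emph{Identification of $T$.} First I would recall, via the results of \cite{BN} (see in particular \cite[Proposition 3.9 and Theorem 5.8]{BN}), that the normal Hopf monad on $\rep K$ attached to the exact sequence \eqref{sec-mod} is $T=FL$, the monad of the adjunction $L=\Ind^H_K\dashv F=i^{*}=\Res^H_K$. Here the comparison functor gives an equivalence $\rep H\simeq(\rep K)^{T}$ over $\rep K$, and since $(L,F)$ is a comonoidal adjunction in the sense of \cite[Subsection 2.5]{blv} with $F$ strong (in fact strict) monoidal, $T$ carries its canonical Hopf monad structure: the monad structure is the unit $\eta$ of the adjunction together with $\mu=F\epsilon L$, where $\epsilon$ is the counit, while the comonoidal structure is $\xi=F(L_2)$, $\phi=F(L_0)$, with $(L_2,L_0)$ the oplax comonoidal structure that $L$ acquires as the left adjoint of the strong monoidal functor $F$; namely $L_2$ is the mate of $\eta_W\otimes\eta_U\colon W\otimes U\to FL(W)\otimes FL(U)$ and $L_0$ the mate of $\eta_{\uno}$. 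One also records here that, $F$ being restriction, $F\epsilon L$ and all the $H$-module structures are transported faithfully by the $r_W$'s.

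\emph{The computations.} Transporting $L(W)=W\otimes_K H$ along $r_W$ gives $T(W)=W\otimes\overline H$ as a vector space; the $K$-action is the transport of right multiplication by $H$ on $W\otimes_K H$, and applying the multiplication rule of $K ^\tau\#_\sigma \overline H$ together with the normalizations $\sigma(x,1)=\epsilon(x)1=\sigma(1,x)$ and $x.1=\epsilon(x)1$ collapses it to $(w\otimes x)\leftharpoonup a=w(x_{(1)}.a)\otimes x_{(2)}$. The unit is $\eta_W(w)=w\otimes_K 1\mapsto w\otimes 1$. For $\mu$, an element $(w\otimes x)\otimes y$ of $T^2(W)$ corresponds under $r$ to $(w\otimes_K(1\# x))\otimes_K(1\# y)$ in $(W\otimes_K H)\otimes_K H$, on which $F\epsilon L$ acts by $(w\otimes_K(1\# x))\otimes_K(1\# y)\mapsto w\otimes_K\bigl((1\# x)(1\# y)\bigr)$; expanding $(1\# x)(1\# y)=\sigma(x_{(1)},y_{(1)})\# x_{(2)}y_{(2)}$ and applying $r_W$ yields the stated $\mu_W$. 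For the comonoidal structure one uses $L_2\bigl((w\otimes u)\otimes_K h\bigr)=(w\otimes_K h_{(1)})\otimes(u\otimes_K h_{(2)})$ with $h=1\# x$, evaluates $\Delta_H(1\# x)=\tau(x_{(1)})\bigl(1\#(x_{(2)})_{(0)}\otimes(x_{(2)})_{(1)}\# x_{(3)}\bigr)$ from the comultiplication rule (again simplifying the resulting products in $H$ via the normalizations and $\Delta_K(1)=1\otimes1$), and pushes the result through $r_W\otimes r_U$ to obtain the formula for $\xi_{W,U}$. Finally $\phi=F(L_0)$ is the $H$-linear map $\ku\otimes_K H\to\ku$ with $1\otimes_K(1\# x)\mapsto\epsilon_H(1\# x)=\epsilon_{\overline H}(x)$, that is, $\phi=\id\otimes\epsilon_{\overline H}$.

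\emph{Main obstacle.} The conceptual weight lies entirely in the first step: pinning down that the Hopf monad produced by \cite{BN} for \eqref{sec-mod} really is $FL$ carrying precisely the comonoidal structure $F(L_2),F(L_0)$. Once that is fixed, the rest is a long but routine bookkeeping computation inside $K ^\tau\#_\sigma \overline H$; the only genuinely delicate point is the evaluation of $\Delta_H(1\# x)$ and of its image under $r_W\otimes r_U$, where one must track carefully how the dual cocycle $\tau$ and the coaction $\rho$ interact with the multiplication of $H$. Throughout, the normalization conventions on the data $(.,\sigma,\rho,\tau)$ from \cite[Theorem 2.20]{AD} are invoked repeatedly to discard the spurious $\epsilon$-terms.
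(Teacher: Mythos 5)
Your proposal is correct and follows essentially the same route as the paper: the paper's proof also identifies $T$ with the monad $FL$ of the adjunction $(\Ind^H_K,\Res^H_K)$ carrying its canonical bimonad structure (citing \cite[Theorem 2.6]{BV} and \cite[Proposition 3.5]{blv}) and observes that its induced Hopf algebra is $(\overline H)^*$, leaving the formulas as "the prescribed structure." Your explicit unwinding of the $K$-action, $\mu$, $\xi$ and $\phi$ through the isomorphisms $r_W$ and the bicrossed product structure of $K^\tau\#_\sigma\overline H$ is exactly the bookkeeping the paper leaves implicit, and it checks out.
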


 \begin{proof} Since $F: \rep H \to \rep K$ is a strict strong monoidal functor,
then the monad $T = FL$ of the adjunction $(L, F)$ is a bimonad with the
prescribed structure; see \cite[Theorem 2.6]{BV}. Furthermore, $T$ is a Hopf
monad, by \cite[Proposition 3.5]{blv}.
Note that, thus defined, $T$ is normal, faithful and right exact and the induced
Hopf algebra of $T$ in the sense of \cite[Section 5]{BN} coincides with
$L(\ku)^* = (\overline  H)^*$. This proves the proposition. \end{proof}

\begin{rmk} Consider the case where  the exact sequence \eqref{sec-hopf}
 is \emph{cocentral} or, equivalently, $\overline H$ is isomorphic to the
group algebra $\ku G$ of a finite group $G$ and the weak coaction $\rho$ is
trivial.
 It is known that the cohomological data $., \sigma, \tau$ give rise to an
action of $G$
 on the category $\rep K$ by tensor autoequivalences and the equivariantization
$(\rep K)^G$ is equivalent to $\rep H$ as tensor categories \cite[Subsection
3.3]{ext-ty}.
In this case, the normal Hopf monad $T$ associated to the Hopf algebra extension
by
Proposition \ref{monad-ext} coincides with the Hopf monad of the corresponding
group
 action given by \cite[Theorem 5.21]{BN}. \end{rmk}

\subsection{The category $\caT$ when $T$ is normal} An exact sequence of
tensor categories
\begin{equation}\label{exacta-gral}\ca' \lto \ca \overset{F}\lto \ca''
\end{equation} is called \emph{perfect} if 
$F$ is a perfect tensor functor, that is, $F$ admits an exact right adjoint $R:
\ca'' \to \ca$. 

Suppose that $\ca'$ is a finite tensor category. Then 
\eqref{exacta-gral} corresponds to a normal faithful $\ku$-linear right exact
Hopf monad $T$ on $\ca''$. In this case the sequence is perfect if and only if
$T$ is an exact endofunctor of $\ca''$.

\medbreak Let $(A, \sigma) \in \mathcal Z(\ca)$ be the induced central
algebra of $F$. Since $A  = R(\uno)$, then the normality of $F$ is equivalent to
the assumption that $A$ belongs to $\KER_F$.

We shall use the identifications $\ca'' = \ca_A$ and $F = F_A: \ca \to
\ca_A$ is the free $A$-module functor. 
Since $F_A$ is a tensor functor, then $F_A(A) = A \otimes A$ is an algebra in
$\ca_A$ with multiplication $m \otimes
\id_A$. 

An object of ${}_{F(A)}(\ca_A)$ is a right $A$-module $Y$ in $\ca$
endowed with a morphism  $F_A(A) \otimes_A Y \to Y$ in
$\ca_A$. Note that for all object $Y$ of $\ca_A$ we have a
canonical isomorphism $F_A(A) \otimes_A Y \simeq A \otimes Y$ in $\ca_A$, where
the right $A$-module structure on $A \otimes Y$ is given by the right action of
$A$ on $Y$.    Hence a morphism $F_A(A) \otimes_A Y \to Y$ is uniquely
determined by a morphism $A \otimes Y \to Y$ of right $A$-modules in $\ca$.

We obtain in this way an equivalence of categories ${}_{F(A)}\ca''
\simeq {}_A\ca_A$. 
Since $F$ is normal, $F(A)$ is a trivial object of $\ca''$, and this restricts
in addition to an equivalence ${}_{F(A)}\langle \uno \rangle \simeq
{}_A(\KER_F)_A$.

\begin{prop}\label{klinear} Let $\ca$ be a finite tensor category and let $T$ be a normal
faithful $\ku$-linear exact Hopf monad on $\ca$ with induced Hopf algebra
$H$.
Then there is an equivalence of $\ku$-linear categories $(\ca \rtimes T)^{\op}
\simeq H\rrep \boxtimes \, \ca$.
\end{prop}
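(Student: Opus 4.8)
The plan is to combine the description of $\caT$ from Example \ref{bimodulos} with the analysis of the bimodule category ${}_{F(A)}\ca''$ carried out in the paragraphs immediately preceding the statement, specialized to the case $\ca'' = \ca$ (so that the exact sequence is $\corep H \to \ca^T \to \ca$ and the ambient finite tensor category in Example \ref{bimodulos} is $\ca^T$, with $F = \Fc$ the forgetful functor). First I would recall that, since $T$ is normal, faithful, $\ku$-linear and exact, the forgetful functor $\Fc: \ca^T \to \ca$ is a perfect dominant tensor functor, so the induced central algebra $(A,\sigma) \in \mathcal Z(\ca^T)$ of $\Fc$ has $A = R(\uno)$ lying in $\KER_\Fc \simeq \corep H$, and by Example \ref{bimodulos} there is an equivalence of tensor categories $\caT \simeq ({}_A(\ca^T)_A)^{\op}$. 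Dualizing, $(\caT)^{\op} \simeq {}_A(\ca^T)_A$, so it suffices to produce an equivalence of $\ku$-linear categories ${}_A(\ca^T)_A \simeq H\rrep \boxtimes \ca$.

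Next I would invoke the equivalence ${}_{F(A)}\ca'' \simeq {}_A\ca_A$ established just before the statement, applied with $\ca'' = \ca$ and $\ca_A = (\ca^T)_A$ (using $K:(\ca^T)_A \to \ca$ with $K F_A = \Fc$): this gives ${}_A(\ca^T)_A \simeq {}_{\Fc(A)}\ca$ as $\ku$-linear categories, where $\Fc(A)$ is viewed as an algebra in $\ca$ via the tensor functor $\Fc$. Since $T$ is normal, $\Fc(A)$ is a \emph{trivial} object of $\ca$, i.e.\ $\Fc(A) \cong \uno^{\oplus n}$ for some $n$, and in fact, by the identification $\KER_\Fc \simeq \corep H$ together with $A = R(\uno)$, the algebra $\Fc(A)$ in $\ca$ is (the image under the canonical inclusion $\vect_\ku \hookrightarrow \ca$, $V \mapsto V \otimes \uno$, of) the algebra $H^*$ — more precisely, $\Fc(A)$ is the regular algebra whose category of modules inside $\vect_\ku$ is $H\rrep$. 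The key step is therefore to identify the algebra structure on the trivial object $\Fc(A) \in \ca$: writing $\Fc(A) = A_0 \otimes \uno$ with $A_0 \in \vect_\ku$, the multiplication of $A$ in $\ca^T$ pushes forward along the strong monoidal $\Fc$ to an algebra structure on $A_0$ in $\vect_\ku$, and one checks — using that $R$ is the right adjoint of $\Fc$ and that the induced Hopf algebra $H$ is by definition built from the fiber functor $\omega_\Fc$ on $\KER_\Fc$ — that $A_0 \cong H^*$ as algebras (equivalently, $\corep H \simeq {}_{A_0}\vect_\ku = H\rrep$, consistent with $\KER_\Fc \simeq \corep H$).

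Finally, with $\Fc(A) \cong H^* \otimes \uno$ as algebras in $\ca$, I would compute ${}_{\Fc(A)}\ca$: a left $\Fc(A)$-module in $\ca$ is an object $Y$ of $\ca$ with a morphism $H^* \otimes \uno \otimes Y \to Y$, i.e.\ $H^* \otimes Y \to Y$, making $Y$ a module over the trivial algebra $H^* \otimes \uno$; since $\uno$ is simple and $\ca$ is $\ku$-linear abelian with $\End(\uno) = \ku$, the category of modules over $V \otimes \uno$ in $\ca$ for a finite-dimensional algebra $V$ is equivalent to ${}_V\vect_\ku \boxtimes \ca = {}_V\mathrm{mod} \boxtimes \ca$ (this is the standard fact that $\Mod$ over a trivial algebra splits off as a Deligne factor; alternatively one checks Hom-finiteness and that simples are tensor products $S_i \boxtimes S$). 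Taking $V = H^*$ gives ${}_{\Fc(A)}\ca \simeq H^*\rrep \boxtimes \ca \simeq \corep H \boxtimes \ca$; but wait — I should be careful about which side: tracking the duals, ${}_{H^* \otimes \uno}(\cdot) \simeq H\rrep \boxtimes \ca$ (since left $H^*$-modules are the same as left $H$-comodules; and one must also recheck a left/right flip from the $(-)^{\op}$ in $\caT \simeq ({}_A(\ca^T)_A)^{\op}$, which matches the $(-)^{\op}$ on the left-hand side of the proposition). Stringing the equivalences together yields $(\caT)^{\op} = (\ca \rtimes T)^{\op} \simeq H\rrep \boxtimes \ca$, as claimed. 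The main obstacle I anticipate is the bookkeeping in the key step: correctly identifying the algebra $\Fc(A)$ in $\ca$ with $H^*$ (with the right conventions relating $H$, $H^*$, comodules vs.\ modules, and the $\op$'s), since $H$ is defined somewhat indirectly as the coend of the fiber functor of the exact sequence; the rest is a routine assembly of equivalences already in the paper.
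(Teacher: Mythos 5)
Your overall route coincides with the paper's: reduce via Example \ref{bimodulos} to $({}_A(\ca^T)_A)$, identify this with ${}_{\Fc(A)}\ca$, and then split off the trivial factor $\Fc(A)$ as a Deligne tensor factor. The first two steps and the Deligne splitting are fine. The problem is in your key step, the identification of the trivial algebra $\Fc(A)=A_0\otimes\uno$. You assert $A_0\cong H^*$ and then write ``$\corep H\simeq {}_{A_0}\vect_\ku = H\rrep$'' and later ``left $H^*$-modules are the same as left $H$-comodules,'' hence $H\rrep$. But left $H$-comodules form $\corep H\simeq {}_{H^*}\vect_\ku$, which is \emph{not} $\ku$-linearly equivalent to $H\rrep$ in general: these two categories have different numbers of simple objects whenever $H$ is noncommutative or noncocommutative. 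So as written your chain of identifications produces $\corep H\boxtimes\ca$, not $H\rrep\boxtimes\ca$, and the attempted correction rests on the false equivalence $\corep H\simeq H\rrep$. A concrete test case is the equivariantization exact sequence $\Rep G\to\ca^G\to\ca$ with $G$ nonabelian: there $H=\ku^G$, $A=R(\uno)=\bigoplus_g\rho^g(\uno)$, and $A_0$ is $\ku^G$ with \emph{pointwise} multiplication, so ${}_{A_0}\vect_\ku\simeq\vect_\ku^{|G|}=H\rrep$ (matching Tambara's $\ca\rtimes G\simeq\ca^{\boxplus|G|}$), whereas $\corep H=\Rep G$ has only as many simples as $G$ has conjugacy classes. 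The correct algebra identification is $A_0\cong H$, not $H^*$.

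The paper sidesteps the explicit computation of $A_0$ that trips you up: instead of identifying the algebra, it uses the equivalence ${}_{\Fc(A)}\langle\uno\rangle\simeq{}_A(\KER_{\Fc})_A$ established just before the statement, and then applies Ostrik's theorem to the fiber functor on $\KER_{\Fc}\simeq\corep H$ (whose induced algebra is $A$ and whose coend is $H$) to conclude ${}_A(\corep H)_A\simeq\End_{\corep H}(\vect_\ku)\simeq H\rrep$. If you want to keep your more explicit approach, you must either prove $A_0\cong H$ as algebras directly from the lax monoidal structure of the right adjoint $R$, or replace your identification by the Morita-theoretic argument via $\End_{\corep H}(\vect_\ku)$ as in the paper.
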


\begin{proof}   The functor $\Fc: \ca^T \to \ca$ gives rise to an exact sequence
of finite tensor categories $\ca' \to \ca^T \overset{\Fc}\to \ca$ such that
$\ca' \simeq \corep H$.
Since $T$ is exact by assumption, then $\Fc$ is a perfect tensor functor and
there is an equivalence of tensor categories $\ca \rtimes T \simeq _A(\ca^T)_A$,
where $(A, \sigma)$ is the induced central algebra of $T$.
 
From the previous discussion, we have that $_A(\ca^T)_A \simeq {}_{\Fc(A)}\ca$.
  Since $\Fc(A)$ is a trivial object of $\ca$, then it follows from
\cite[Proposition 5.11]{deligne} that the tensor product $\otimes: \langle \uno
\rangle \times \ca \to \ca$ induces an equivalence of $k$-linear categories
${}_{\Fc(A)}\langle \uno \rangle \boxtimes \ca \to {}_{\Fc(A)}\ca$.  

To finish the proof we observe that there is an equivalence of
tensor categories ${}_A\ca'_A \simeq  H\rrep$. Indeed, the normality of $\Fc$
implies that $\Fc$ induces a fiber 
functor $\Fc: \ca' \to \langle \uno \rangle$, whose coend is isomorphic to $H$
and whose induced central algebra is
isomorphic to $A$. Hence $\vect_\ku \simeq {\ca'}_A$.
By \cite[Theorem 5]{ostrik}  we get equivalences of
tensor categories ${}_A{\ca'}_A \simeq \End_{\ca'}(\mathcal M) \simeq H\rrep$. 
This finishes the proof of the proposition. 
\end{proof}

\begin{exa} (Hopf algebra exact sequences.) Consider an exact sequence of Hopf algebras $\ku \to K \to H \to
\overline H \to \ku$ and assume that $K$ is finite dimensional. Then $H$ is free
as
a left (or right) module over $K$ and in particular the sequence is cleft
\cite[Theorem 2.1 (2)]{schneider}. By \cite[Proposition 3.9]{BN} we have an
exact sequence of tensor categories
\begin{equation}\label{sec-comod} \corep K \to \corep H \to \corep \overline H.
\end{equation}
Moreover, since $\corep K$ is a finite tensor category, then the exact sequence
\eqref{sec-comod} is determined by a normal faithful Hopf monad $T$ on $\corep
\overline H$.

Observe that $K$ has a natural algebra structure in the category $\corep H$ and,
by cleftness, there is an equivalence of $\corep H$-module categories $\corep
\overline H \simeq (\corep H)_K$. Hence we obtain an equivalence of tensor
categories $(\corep H) \rtimes T \simeq {}_K(\corep H)_K$.
The last category is equivalent to the category of comodules over the
coquasibialgebra $(K^* \bowtie \overline H, \varphi)$, where $\varphi$ is an
associated Kac 3-cocycle \cite[Section 6]{schauenburg}. Thus we get an
equivalence of tensor categories $(\corep H) \rtimes T \simeq \corep (K^* \bowtie
\overline H, \varphi)$.
\end{exa}

\begin{exa} (Equivariantization exact sequences.) 
Let $G$ be a finite
group and let $\rho: \underline{G}\to \underline{\text{Aut}}_\otimes(\ca)$ be an action by tensor autoequivalences of $G$ on the finite tensor category $\ca$. Let also $\ca^G$ denote the corresponding equivariantization.  

The $G$-action gives rise to a normal Hopf monad $T = T^\rho$ on $\ca$ in such a way that $\ca^{T^\rho} \simeq \ca^G$ as tensor categories over $\ca$ (see Example \ref{hm-equiv}). As discussed in \cite[Subsection 5.3]{BN}, we obtain in this way a   (central) exact sequence of tensor categories $$\Rep G \to \ca^G \to \ca.$$ 

Suppose that $\ca$ is a fusion category. The category $\ca \rtimes T^\rho$ and its module categories were studied by Nikshych in \cite{nik}.  It follows from \cite[Proposition 3.2]{nik} that $\ca \rtimes T^\rho$ is equivalent to the crossed product tensor category $\ca \rtimes G$ constructed by Tambara in \cite{Ta}. 

\begin{rmk} Recall that a fusion category is called \emph{pointed} if all its simple objects are invertible. 
On the other side, a fusion category $\ca$ is called \emph{group-theoretical} if it is Morita equivalent to a 
pointed fusion category, that is, if there exists an exact (hence semisimple) indecomposable $\ca$-module category $\Mo$ such that 
$\End_\ca(\Mo)$ is a pointed fusion category.    See \cite[Subsection 8.8]{ENO1}.

\medbreak The fact that $\ca^G$ is Morita equivalent to the crossed product $\ca \rtimes G$ implies immediately that if  $\ca$ is a pointed fusion category, then any equivariantization $\ca^G$ is group-theoretical, because in this case $\ca \rtimes G$ is itself a pointed fusion category.

We observe, however, that this feature does not extend to more general (even normal) Hopf monads. Take
for instance $H = H_p$ to be the non group-theoretical semisimple Hopf algebra of dimension 
$4p^2$ in \cite[Section 5]{nik},  where $p$ is an odd  prime number. 
It follows from \cite[Proposition 5.2]{nik} that there is an exact sequence of Hopf algebras $$\ku \to \ku\Z_2 \to H \to
\overline H \to \ku,$$ where $\overline H$ is a certain semisimple Hopf algebra introduced by Masuoka. 
Hence we get an exact sequence of fusion categories $$\overline H \rrep \to H\rrep \to \ku^{\Z_2}\rrep.$$ 
Therefore the non group-theoretical fusion category $H\rrep$ is equivalent to the fusion category 
$(\ku^{\Z_2}\rrep)^T$, where $T$ is the normal Hopf monad on the pointed fusion category $\ku^{\Z_2}\rrep$
given by Proposition \ref{monad-ext}.
\end{rmk} 
\end{exa}

Nevertheless we have the following:  

\begin{prop}\label{pointed-comm} Let $\ca$ be a pointed fusion category and $T$ be a normal faithful $\ku$-linear Hopf monad on $\ca$. Suppose that the induced Hopf algebra of $T$ is commutative. Then $\ca^T$ is a group-theoretical fusion category.
\end{prop}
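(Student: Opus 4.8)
The plan is to prove not that $\ca^T$ itself is pointed — in general it is not, e.g. for $\ca=\vect_\ku$ and $T=T^{\rho}$ the Hopf monad of the trivial action of a non‑abelian group $G$ one has $\ca^T\simeq\Rep G$ — but that its Morita dual $\caT=\End_{\ca^T}(\ca)$ is a \emph{pointed} fusion category, which by the definition of group‑theoretical recalled in Section \ref{mc-exact} suffices. I would begin by recording that $\ca$, being fusion, is semisimple, so the $\ku$‑linear Hopf monad $T$ is automatically exact and Proposition \ref{klinear} applies; and that, as recalled in Section \ref{mc-exact}, the normal faithful Hopf monad $T$ sits in an exact sequence of tensor categories $\corep H\to\ca^T\to\ca$, with $H$ the finite dimensional induced Hopf algebra of $T$. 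Since its two outer terms are fusion categories, $\ca^T$ is a fusion category, and hence so is $\caT$ (recall that $\ca$ is an exact indecomposable $\ca^T$‑module category because $T$ is faithful).

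Next I would identify the kernel. A finite dimensional commutative Hopf algebra over the algebraically closed field $\ku$ of characteristic $0$ is of the form $H\cong\ku^{G}$, the function algebra of a finite group $G$ with $|G|=\dim_\ku H$. Consequently $\corep H$ is a fusion category with $\mathrm{FPdim}(\corep H)=\dim_\ku H=|G|$, and $H\rrep\cong\ku^{G}\rrep$ has exactly $|G|$ isomorphism classes of simple objects. Only these two numerical facts are needed; in particular one does \emph{not} assert that $\corep H$ (or $\caT$, yet) is pointed.

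The heart of the argument is then a Frobenius--Perron dimension count, which is forced upon us because Proposition \ref{klinear} supplies only a $\ku$‑linear equivalence $(\caT)^{\op}\simeq H\rrep\boxtimes\ca$ and no monoidal one is available in general (already for equivariantizations $\ca\rtimes T\simeq\ca\rtimes G$ is not a Deligne product as a tensor category). On the one hand, this equivalence yields
\[
|\Irr(\caT)|=|\Irr(H\rrep)|\cdot|\Irr(\ca)|=|G|\cdot|\Irr(\ca)|,
\]
while $\mathrm{FPdim}(\ca)=|\Irr(\ca)|$ since $\ca$ is pointed. On the other hand, $\mathrm{FPdim}$ is a Morita invariant and is multiplicative along exact sequences \cite{BN}, so
\[
\mathrm{FPdim}(\caT)=\mathrm{FPdim}(\ca^T)=\mathrm{FPdim}(\corep H)\cdot\mathrm{FPdim}(\ca)=|G|\cdot|\Irr(\ca)|.
\]
Comparing the two computations gives $\mathrm{FPdim}(\caT)=|\Irr(\caT)|$. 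For any fusion category one has $\mathrm{FPdim}=\sum_{X\in\Irr}\mathrm{FPdim}(X)^{2}\ge|\Irr|$, with equality precisely when every simple object is invertible; hence $\caT$ is pointed, and therefore $\ca^T$ is group‑theoretical.

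I expect the only real subtlety to be conceptual rather than computational: one must resist promoting the $\ku$‑linear equivalence of Proposition \ref{klinear} to a tensor equivalence (which is false in general) and instead extract from it just the count of simple objects, matching it against the Frobenius--Perron dimension of $\caT$ obtained from Morita invariance and multiplicativity along $\corep H\to\ca^T\to\ca$. A minor point to keep honest is the semisimplicity of $\ca^T$, so that $\caT$ is genuinely fusion and the inequality $\mathrm{FPdim}\ge|\Irr|$ is available.
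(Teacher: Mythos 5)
Your proposal is correct and follows essentially the same route as the paper: compute $\mathrm{FPdim}(\caT)$ via multiplicativity along the exact sequence $\corep H\to\ca^T\to\ca$ and Morita invariance, count simple objects of $\caT$ via the $\ku$-linear equivalence of Proposition \ref{klinear} using that $H\rrep$ is pointed of rank $\dim H$, and conclude that $\caT$ is pointed because the two numbers coincide. The only cosmetic difference is that the paper passes through integrality of $\ca\rtimes T$ before forcing $\mathrm{FPdim}(X)=1$ for each simple $X$, whereas you use the general bound $\mathrm{FPdim}\ge|\Irr|$ directly; both are valid.
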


\begin{proof} Since $\ca^T$ is an extension of $\ca$ by $H\rrep$, then $\ca^T$ is an integral fusion category of Frobenius-Perron dimension equal to $\dim H \, |G|$, where $G$ is the group of invertible objects of $\ca$. See \cite[Propositions 4.9 and 4.10]{BN}. By \cite[Corollary 8.14 and Theorem 8.35]{ENO1},  $\ca \rtimes T$ is also an integral fusion category of Frobenius-Perron dimension $\dim H \, |G|$.

Let $H$ be the induced Hopf algebra of $T$. By Proposition \ref{klinear}, we have an equivalence of $\ku$-linear categories $(\ca \rtimes T)^{\op} \simeq H\rrep \boxtimes \, \ca$. 
Since $H$ is commutative, then the category $H\rrep$ is also pointed.  Let $g_1, \dots, g_n$ be the pairwise non-isomorphic invertible objects of $H\rrep$, where $n = \dim H$, and let $h_1, \dot, h_{|G|}$ be the  pairwise non-isomorphic invertible objects of $\ca$. Then the simple objects of $H\rrep \boxtimes \, \ca$ are, up to isomorphism, $g_i \boxtimes h_j$, $1\leq i \leq n$, $1\leq j\leq |G|$. 

In particular the category $H\rrep \boxtimes \, \ca$ has $n|G|$ simple objects. Then so does the category $\ca \rtimes T$. Since $\ca \rtimes T$ is integral, then for every simple object $X \in \ca \rtimes T$, we must have $\operatorname{FPdim} X = 1$, that is, $X$ is an invertible object. Hence the category $\ca \rtimes T$ is pointed and therefore $\ca^T$ is group-theoretical, as claimed. \end{proof}

Proposition \ref{pointed-comm} allows us to recover the fact that any semisimple Hopf algebra $H$ such that $H$ fits into an exact sequence $\ku \to \ku^\Gamma \to H \to \ku F \to \ku$, where $\Gamma$ and $F$ are finite groups, is group-theoretical. In fact, we have in this case $H\rrep \simeq (\ku^\Gamma\rrep)^T$, where $T$ is the associated normal Hopf monad. From Proposition \ref{monad-ext} we have that the induced Hopf algebra of $T$ is the commutative Hopf algebra $\ku^F$.

\end{document}